\documentclass[reqno]{amsart}
\usepackage{amssymb,amsmath,amsthm}
\usepackage{enumitem}

\makeatletter
\@namedef{subjclassname@2020}{\textup{2020} Mathematics Subject Classification}
\makeatother

\theoremstyle{plain}
\newtheorem{theorem}{Theorem}[section]
\newtheorem{lemma}[theorem]{Lemma}
\newtheorem{corollary}[theorem]{Corollary}
\newtheorem{proposition}[theorem]{Proposition}

\theoremstyle{definition}

\newtheorem{remark}[theorem]{Remark}

\newcommand{\bzero}{\boldsymbol{0}}
\newcommand{\balpha}{\boldsymbol{\alpha}}
\newcommand{\bbeta}{\boldsymbol{\beta}}
\newcommand{\bdelta}{\boldsymbol{\delta}}
\newcommand{\blambda}{\boldsymbol{\lambda}}

\newcommand{\bc}{\mathbf{c}}
\newcommand{\bd}{\mathbf{d}}

\newcommand{\bg}{\mathbf{g}}

\newcommand{\bq}{\mathbf{q}}

\newcommand{\bt}{\mathbf{t}}
\newcommand{\bu}{\mathbf{u}}
\newcommand{\bv}{\mathbf{v}}
\newcommand{\bw}{\mathbf{w}}

\newcommand{\by}{\mathbf{y}}
\newcommand{\bz}{\mathbf{z}}

\newcommand{\D}{\mathbf D}

\newcommand{\row}{\operatorname{row}}
\newcommand{\occ}{\operatorname{occ}}

\begin{document}
\title[The finite basis problem for $S_c(W)$, $M_c(W)$ and $M(W)$]
{The finite basis problem for flat semirings $S_c(W)$, $M_c(W)$ and $M(W)$}

\author{Zidong Gao}
\address{School of Mathematics, Northwest University, Xi'an, 710127, Shaanxi, P.R. China}
\email{zidonggao@yeah.net}

\author{Miaomiao Ren}
\address{School of Mathematics, Northwest University, Xi'an, 710127, Shaanxi, P.R. China}
\email{miaomiaoren@yeah.net}

\author{Yilin Zhou}
\address{School of Mathematics, Northwest University, Xi'an, 710127, Shaanxi, P.R. China}
\email{zhou990619@163.com}

\subjclass[2020]{16Y60, 03C05, 08B15, 08B26}
\keywords{semiring, variety, finite basis problem.}

\begin{abstract}
We study the finite basis problem for flat semirings of the forms
\(S_c(W)\), \(M_c(W)\), and \(M(W)\), where \(W\) is a nonempty set of
words in a free commutative semigroup, a free commutative monoid, and a
free monoid, respectively. We completely classify such flat semirings with
respect to the finite basis property, allowing \(W\) to be infinite. We
prove that \(S_c(W)\) is finitely based if and only if every word in \(W\)
is either a cube of a letter or has length at most two, whereas
\(M_c(W)\) and \(M(W)\) are finitely based if and only if \(W\) consists
solely of the empty word.
As applications, we recover the nonfinite
basability of \(\flat(\mathbb{Z})\) and the max-plus semiring
\((\mathbb{Z},\max,+)\).
\end{abstract}

\maketitle
\section{Introduction and preliminaries}
An \emph{additively idempotent semiring} (or ai-semiring for short) is
an algebra $(S, +, \cdot)$ such that
\begin{itemize}
\item the additive reduct $(S, +)$ is a commutative idempotent semigroup;

\item the multiplicative reduct $(S, \cdot)$ is a semigroup;

\item the distributive laws hold:
\[
x(y+z)\approx xy+xz, \quad (x+y)z\approx xz+yz.
\]
\end{itemize}

The additive reduct of an ai-semiring is a semilattice,
and the order $\leq$ on an ai-semiring defined by
\[
a \leq b\;\Leftrightarrow\; a+b=b
\]
is compatible with both addition and multiplication.
Consequently, an ai-semiring is often called a \emph{semilattice-ordered semigroup} (see \cite{kp}).
Whenever an order on an ai-semiring is mentioned, it refers to this order.
For an ai-semiring $S$, if $J$ is both a multiplicative ideal and an additive order filter of $S$,
we may form the ideal quotient $S/J$ by collapsing all elements of $J$ to a single element,
while leaving all elements outside $J$ as distinct singleton classes.
In this quotient algebra, $J$ serves as the multiplicative zero and the additive maximum element.

The class of ai-semirings includes many well-known and important examples,
such as the max-plus algebra~\cite{aei}, the Boolean algebra~\cite{bs},
and the semiring of all binary relations on a set~\cite{dol09}.
These and other similar algebras form the algebraic foundation of tropical geometry and optimization~\cite{ms},
underlie classical logic and automata theory~\cite{enderton2001, hopcroft2007},
and play a key role in relational algebra and program verification~\cite{kozen2008, maddux2006}.

A \emph{variety} of ai-semirings is a class of ai-semirings
closed under taking subalgebras, homomorphic images, and arbitrary direct products.
Birkhoff's celebrated theorem tells us that a class of ai-semirings is a variety if and only if it is an equational class;
that is, the class of all ai-semirings satisfying a certain set of identities.
A variety of ai-semirings is \emph{finitely based} if it can be defined by a finite set of identities;
otherwise, it is \emph{nonfinitely based}.
An ai-semiring $S$ is finitely based (resp., nonfinitely based)
if the variety $\mathsf{V}(S)$ it generates is finitely based (resp., nonfinitely based).

The \emph{finite basis problem} for a class of ai-semirings,
one of the most important problems in universal algebra,
concerns the classification of
its members according to whether they are finitely based.
Over the past two decades, this problem has attracted considerable attention and seen substantial progress
(see~\cite{dol07, gpz05, pas05, jac:flat, sr, jrz, rjzl, yr2602, yrg, rlyc, rlzc, ryy, yrzs}).

Pastijn et al.~\cite{gpz05, pas05} showed that every ai-semiring satisfying the identity $x^2\approx x$ is finitely based.
Dolinka~\cite{dol07} found the first example of a nonfinitely based finite ai-semiring.
Jackson~\cite{jac:flat} solved the finite basis problem for the flat extensions of finite groups,
thereby providing infinitely many nonfinitely based finite ai-semirings.
Shao and Ren~\cite{sr} proved that every ai-semiring in the variety generated by all ai-semirings of order two is finitely based.
Jackson et al.~\cite{jrz} and Zhao et al.~\cite{zrc} classified three-element ai-semirings with respect to the finite basis property.

Moreover, Volkov~\cite{vol21} and Jackson et al.~\cite{jrz} independently
resolved the finite basis problem for the ai-semiring whose multiplicative reduct is the six-element Brandt monoid.
Ren et al.~\cite{rjzl} provided an infinite number of minimal nonfinitely based ai-semiring varieties.
Very recently, the finite basis problem for four-element ai-semirings has been nearly completed, with only two algebras remaining (see \cite{yr2602, yrg, rlyc, rlzc, ryy, yrzs}).

In much of the above work, flat semirings have played an important and even decisive role.
By a \emph{flat semiring} we mean a nontrivial ai-semiring \(S\) with a
multiplicative zero \(0\) that satisfies \(a+b=0\) for all distinct
\(a,b\in S\). Equivalently, an ai-semiring is flat if and only if its
additive order has height one and its top element is the multiplicative
zero \(0\).
As observed in~\cite[Lemma 2.2]{jrz} (see also~\cite[Lemma 4.1.1]{ek}),
a semigroup with zero \(0\) becomes a flat semiring if and only if it is \(0\)-cancellative; that is, for all $a, b, c \in S$,
\[
ab = ac \neq 0 \;\Rightarrow\; b = c \quad\text{and}\quad ab = cb \neq 0 \;\Rightarrow\; a = c.
\]

Let $S$ be a cancellative semigroup. Then $S^0$ is $0$-cancellative and hence becomes a flat semiring,
which is called the \emph{flat extension} of $S$ and denoted by $\flat(S)$.
Since the element $0$ is both the additive maximum and the multiplicative zero,
we sometimes use $\infty$ in place of $0$, especially when the symbol $0$ might cause confusion.
For example, when considering the flat extension of the commutative group $\mathbb Z$,
the underlying set of $\flat(\mathbb Z)$ is by default taken to be $\mathbb Z\cup\{\infty\}$.

Let $k \geq 1$ be an integer.
A flat semiring is \emph{$k$-nil} if the $k$th power of each element is zero.
A flat semiring is \emph{$k$-nilpotent} if the product of any $k$ elements is zero.
A flat semiring is \emph{nilpotent} if it is $k$-nilpotent for some $k \geq 1$.
It is easy to see that every $k$-nilpotent flat semiring is $k$-nil;
the converse, however, does not hold in general.

It is easy to see that there is a one-to-one, inclusion-preserving
correspondence between semiring congruences and multiplicative ideals on a flat semiring.
This correspondence hinges on the fact that, in a flat semiring,
every multiplicative ideal naturally determines a semiring congruence, since it is an additive filter.
Conversely, every semiring congruence on a flat semiring with zero $0$
has its $0$-congruence class as a multiplicative ideal.
This yields the desired one-to-one order-preserving correspondence.

From this observation, we immediately obtain the following subdirect representation.
Let $S$ be a flat semiring with zero $0$, and let $(S_i)_{i\in I}$ be a family of flat semirings.
Then $S$ is isomorphic to a subdirect product of $(S_i)_{i\in I}$
if and only if there exists a family $(J_i)_{i\in I}$ of multiplicative ideals of $S$
such that $S/J_i$ is isomorphic to $S_i$ for each $i \in I$, and the intersection $\bigcap_{i\in I} J_i = \{0\}$.

Next, we introduce an important class of flat semirings. Let \(W\) be a
nonempty set of words in the free semigroup \(X^+\) over \(X\), and let
\(W^{\leq}\) denote the set of all nonempty subwords of words in \(W\).
Let \(S(W)=W^{\leq}\cup\{0\}\), where \(0\) is a new element.
Define a multiplication $\cdot$ on \(S(W)\) by, for \(\mathbf u,\mathbf v\in W^{\leq}\),
\[
\mathbf u\cdot\mathbf v=
\begin{cases}
\mathbf u\mathbf v, & \text{if }\mathbf u\mathbf v\in W^{\leq},\\
0, & \text{otherwise},
\end{cases}
\]
\[
0\cdot 0=0\cdot\mathbf u=\mathbf u\cdot 0=0.
\]
The resulting algebra is a \(0\)-cancellative semigroup, and hence becomes a flat semiring.
If we start the same construction from the free monoid $X^* = X^+\cup \{1\}$ instead,
with $W^{\leq}$ denoting the set of all subwords of words in $W$,
then the corresponding flat semiring is denoted by $M(W)$.
Repeating the construction with the free commutative semigroup $X_c^+$ (or the free commutative monoid $X_c^*$) yields the commutative analogues $S_c(W)$ and $M_c(W)$.

From the construction it follows immediately that
\(S_c(W)=S_c(W^{\leq})\). If \(W\) consists of a single word \(\mathbf w\),
we shall use \(S_c(\mathbf w)\) to denote \(S_c(W)\).
Let \(n\geq 1\) be an integer. It is easy to see that \(S_c(W)\) is
\(n\)-nilpotent if and only if every word in \(W\) has length less than
\(n\), and that \(S_c(a_1\cdots a_n)\) is \((n+1)\)-nilpotent but is not
\(n\)-nilpotent.

If $W$ consists solely of the empty word, then $M_c(W)$ coincides with $M(W)$,
is a two-element ai-semiring $M_2$, and hence is finitely based (see \cite{sr}).
Jackson et al.~\cite{jrz} proved that, when $W$ is finite and contains at least one nonempty word,
$M_c(W)$ and $M(W)$ are always nonfinitely based, while $S_c(W)$ is
nonfinitely based in most cases. Subsequently, Wu et al.~\cite{wzr}
completely solved the finite basis problem for finite flat semirings of
the form $S_c(W)$. The aim of the present paper is to solve this
problem completely for arbitrary (not necessarily finite) $S_c(W)$,
$M_c(W)$ and $M(W)$.

We conclude this section by recalling some terminology concerning identities.
From \cite[Theorem 2.5]{kp} we know that
the set $P_f(X^+)$ of all nonempty finite subsets of $X^+$,
equipped with addition as set-theoretic union and multiplication as elementwise product,
is the free ai-semiring over $X$. Consequently, an \emph{ai-semiring term} (or simply a \emph{term}) is a formal sum of finitely many words in $X^+$.
(In what follows, terms are denoted by bold lowercase letters \(\mathbf{u}, \mathbf{v}, \mathbf{w}, \dots\),
while ordinary lowercase letters \(x, y, z, \dots\) stand for letters.)
The order of the summands in the formal sum is irrelevant,
and repeated occurrences of the same word are identified with a single occurrence.

An \emph{ai-semiring identity} (or simply an \emph{identity}) is
a formal expression of the form \(\bu \approx \bv\), where \(\bu\)
and \(\bv\) are polynomials. We say that an ai-semiring \(S\)
\emph{satisfies} an identity \(\bu \approx \bv\) over
\(X=\{x_1,\ldots,x_n\}\), denoted \(S\vDash \bu\approx \bv\), if
\[
\bu(a_1, a_2, \dots, a_n) = \bv(a_1, a_2, \dots, a_n)
\]
for all \(a_1, a_2, \dots, a_n \in S\). Equivalently,
\(\varphi(\bu) = \varphi(\bv)\) for every semiring homomorphism
\(\varphi \colon P_f(X^+) \to S\). Such a homomorphism is also called
an \emph{assignment} (or a \emph{valuation}), and is uniquely
determined by the images of the elements of \(X\); for convenience,
we may denote it simply by \(\varphi \colon X \to S\).
We also use  \(S \nvDash \bu\approx \bv\) to indicate that \(S\) does not satisfy $\bu\approx \bv$.

Suppose that $\bu = \bu_1 + \bu_2 + \cdots + \bu_m$ and $\bv = \bv_1 + \bv_2 + \cdots + \bv_n$ are two ai-semiring terms.
Then it is easy to verify that $S$ satisfies the identity $\bu \approx \bv$ if and only if
it satisfies the identities $\bu \approx \bu + \bv_i$ and $\bv \approx \bv + \bu_j$ for all $1 \leq i \leq n$, $1 \leq j \leq m$.
Consequently, $S$ lies in an ai-semiring variety $\mathcal{V}$  if and only if
it satisfies every nontrivial identity of $\mathcal{V}$ of the form $\bu \approx \bu + \bq$,
where $\bu$ is a polynomial and $\bq$ is a word.

For a word \(\bw\) in \(X^+\), let \(c(\bw)\) denote the
\emph{content} of \(\bw\), that is, the set of letters occurring in
\(\bw\), and let \(\ell(\bw)\) be the \emph{length} of \(\bw\), that is,
the number of variables occurring in \(\bw\) counting multiplicities.
We also write \(\occ(x,\bw)\) for the number of occurrences of a
variable \(x\) in a word \(\bw\). For an ai-semiring term $\bu = \bu_1+\cdots+\bu_m$, where $\bu_i\in X^+$,
we let $c(\bu) = \bigcup_{1\leq i\leq m} c(\bu_i)$.

The paper is organized as follows.
In Section~\ref{sec:scw}, we provide some properties of the flat semirings $S_c(W)$.
In Section~\ref{sec:NFB}, we use hypergraph semirings to establish a sufficient condition
for nonfinite basability and apply it to certain flat semirings of the forms $S_c(W)$, $M_c(W)$, and $M(W)$.
In Section~\ref{sec:flatn}, we establish another sufficient condition via a different approach,
and apply it to resolve the finite basis problem for the remaining algebras.
In Section~\ref{sec:conclu}, we summarize the paper and discuss some open problems.

\section{The flat semirings $S_c(W)$}\label{sec:scw}
In this section, we first establish the key properties of the flat semirings $S_c(W)$,
and then apply them to solve the finite basis problem for nilpotent flat semirings of the form $S_c(W)$.
For a class $\mathcal{K}$ of ai-semirings,
we denote by $\mathsf{V}(\mathcal{K})$ the variety generated by $\mathcal{K}$;
that is, the smallest variety containing $\mathcal{K}$.

\begin{proposition}\label{WiW}
Let $\{W_i\}_{i \in I}$ be a family of nonempty subsets of $X_c^+$. Then
\[
\mathsf{V}\!\left(S_c\!\left(\bigcup_{i \in I} W_i\right)\right) = \mathsf{V}\bigl( S_c(W_i) \mid i \in I \bigr).
\]
\end{proposition}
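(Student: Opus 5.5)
We prove the two inclusions separately, writing $W=\bigcup_{i\in I}W_i$ throughout.

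\emph{The inclusion $\supseteq$.} For each $i\in I$ we show that $S_c(W_i)$ is a homomorphic image of a subalgebra of $S_c(W)$. Since $W_i^{\leq}\subseteq W^{\leq}$, the set $W^{\leq}\setminus W_i^{\leq}$ is closed upward under divisibility: if $\bu\notin W_i^{\leq}$, then no multiple $\bu\bv$ lies in $W_i^{\leq}$, because $W_i^{\leq}$ is closed under taking subwords. Hence $J_i=(W^{\leq}\setminus W_i^{\leq})\cup\{0\}$ is a multiplicative ideal of $S_c(W)$. By the correspondence between ideals and congruences on flat semirings recalled in the introduction, $J_i$ yields the ideal quotient $S_c(W)/J_i$. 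This quotient has underlying set $W_i^{\leq}\cup\{0\}$. Two elements $\bu,\bv\in W_i^{\leq}$ have product $\bu\bv$ in the quotient exactly when $\bu\bv\in W_i^{\leq}$, and product $0$ otherwise. So $S_c(W)/J_i\cong S_c(W_i)$, and therefore $S_c(W_i)\in\mathsf V(S_c(W))$.

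\emph{The inclusion $\subseteq$.} The natural first idea is to use the subdirect representation criterion with the ideals $J_i$. However, $\bigcap_i J_i=\{0\}$ holds only when every element of $W^{\leq}$ lies outside some $J_i$, and that is true, since each $\bu\in W^{\leq}$ divides a word of some $W_i$ and so belongs to $W_i^{\leq}$. Then $S_c(W)$ is a subdirect product of the family $S_c(W)/J_i\cong S_c(W_i)$, which lies in $\mathsf V(S_c(W_i)\mid i\in I)$. This requires $I$ nonempty, which is implicit because $W$ is nonempty.

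\emph{Main obstacle.} The only delicate point is making sure the subdirect criterion genuinely applies. We must check that each quotient $S_c(W)/J_i$ is a flat semiring, which holds because $W_i$ is nonempty, so the quotient is nontrivial. We also must confirm that the ideal-quotient construction is a semiring homomorphism. This follows because $J_i$ is both a multiplicative ideal and an additive filter: in a flat semiring, any set containing $0$ is an additive filter.
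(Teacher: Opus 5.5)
Your proof is correct and follows the paper's argument: the same ideals $J_i=S_c(W)\setminus W_i^{\leq}$ give $S_c(W)/J_i\cong S_c(W_i)$, which yields $\supseteq$, and $\bigcap_{i\in I}J_i=\{0\}$ makes $S_c(W)$ a subdirect product of the $S_c(W_i)$, which yields $\subseteq$. You simply spell out separately the two inclusions that the paper compresses into a single subdirect-product sentence.
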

\begin{proof}
Let $W$ denote the union $\bigcup_{i \in I} W_i$. For each $i \in I$, define $J_i = S_c(W) \setminus W_i^{\leq}$.
It is straightforward to verify that $J_i$ is a multiplicative ideal of $S_c(W)$,
and that the Rees quotient $S_c(W)/J_i$ is isomorphic to $S_c(W_i)$.
Since $\bigcap_{i \in I} J_i = \{0\}$, it follows that $S_c(W)$ is isomorphic to a subdirect product of $(S_c(W_i))_{i \in I}$.
Therefore, $\mathsf{V}(S_c(W)) = \mathsf{V}\bigl( S_c(W_i) \mid i \in I \bigr)$, as required.
\end{proof}

\begin{corollary}\label{wwi}
Let $W$ be a nonempty set of words in $X^+_c$. Then
\[
{\mathsf V}(S_c(W))={\mathsf V}(S_c(\bw) \mid \bw\in W)
\]
and
\[
{\mathsf V}(S_c(W))={\mathsf V}(S_c(\bw) \mid \bw\in W^{\leq}).
\]
In particular, $S_c(\bw)$ belongs to $\mathsf{V}(S_c(W))$ for every $\bw\in W^{\leq}$.
\end{corollary}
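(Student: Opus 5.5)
Both equalities follow from Proposition~\ref{WiW} by writing the relevant set of words as a union of singletons. For the first, index the family by $W$ itself and put $W_{\bw}=\{\bw\}$ for each $\bw\in W$. These are nonempty subsets of $X_c^+$ and their union is $W$, so Proposition~\ref{WiW} gives
\[
\mathsf V(S_c(W))=\mathsf V\bigl(S_c(\{\bw\})\mid \bw\in W\bigr)=\mathsf V\bigl(S_c(\bw)\mid \bw\in W\bigr),
\]
using the convention $S_c(\bw)=S_c(\{\bw\})$.

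For the second equality, first recall the remark made right after the construction: $S_c(W)=S_c(W^{\leq})$. This holds because $(W^{\leq})^{\leq}=W^{\leq}$, since a subword of a subword of a word in $W$ is again a subword of that word; the two algebras therefore have the same underlying set and the same multiplication. Now apply the first equality with $W^{\leq}$ in place of $W$. Here $W^{\leq}$ is a nonempty subset of $X_c^+$, because it contains $W$. This gives
\[
\mathsf V(S_c(W))=\mathsf V(S_c(W^{\leq}))=\mathsf V\bigl(S_c(\bw)\mid \bw\in W^{\leq}\bigr).
\]

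The final assertion follows immediately from the second equality. Each $S_c(\bw)$ with $\bw\in W^{\leq}$ is one of the generators of the right-hand variety, so it lies in $\mathsf V(S_c(W))$.

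I do not expect any real obstacle here. The only point needing care is the identity $(W^{\leq})^{\leq}=W^{\leq}$, which is the transitivity of the subword relation in the free commutative semigroup (divisibility of monomials), together with checking that the multiplications on the two algebras agree.
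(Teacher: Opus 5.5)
Your proposal is correct and follows essentially the same route as the paper: the first equality comes from applying Proposition~\ref{WiW} to the singletons $\{\bw\}$, $\bw\in W$. The second comes from $S_c(W)=S_c(W^{\leq})$ together with the same argument applied to $W^{\leq}$, and your explicit justification via $(W^{\leq})^{\leq}=W^{\leq}$ just fills in a detail the paper leaves implicit.
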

\begin{proof}
The first equality follows directly from Proposition~\ref{WiW}.
The second one follows from Proposition~\ref{WiW} together with the fact that $S_c(W)=S_c(W^{\leq})$.
\end{proof}

Corollary~\ref{wwi} implies that $S_c(\bw_1)\in \mathsf{V}(S_c(\bw_2))$ if $\bw_1$ is a subword of $\bw_2$.
However, the converse does not hold; counterexamples can be easily constructed using the following two lemmas,
which are due to Jackson et al.~\cite[Proposition 2.7]{jrz} and Wu et al.~\cite[Proposition 2.7]{wzr}, respectively.

\begin{lemma}[{\cite[Proposition 2.7]{jrz}}]\label{lemsc1}
Let $n\geq 1$ be an integer, and let $\bw$ be a word in $X_c^+$
such that $\ell(\bw)=n$ and $|c(\bw)| \geq 2$.
Then $S_c(a_1\cdots a_n)$ belongs to $\mathsf{V}(S_c(\bw))$.
\end{lemma}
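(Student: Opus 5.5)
The plan is to realise $S_c(a_1\cdots a_n)$ as an ideal quotient of a subsemiring of a direct power of $S_c(\bw)$. Varieties are closed under products, subalgebras and homomorphic images, so this suffices.

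\textbf{Setting up the power.} Write $\bw=b_1b_2\cdots b_n$ with $b_j\in X$; the $b_j$ may repeat, but at least two distinct letters occur. Let $\Sigma$ be the set of all permutations of $\{1,\dots,n\}$ and consider the power $S_c(\bw)^{\Sigma}$. For each $i\le n$ define $e_i\in S_c(\bw)^{\Sigma}$ by $(e_i)_\sigma=b_{\sigma(i)}$. For a nonempty multiset $M$ of indices, write $e_M=\prod_{i\in M}e_i$. Its $\sigma$-coordinate is the commutative word $\prod_{i\in M}b_{\sigma(i)}$ if this is a subword of $\bw$, and $0$ otherwise. Let $T$ be the subsemiring generated by $e_1,\dots,e_n$. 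Its elements are finite sums of the $e_M$.

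\textbf{Key facts about the $e_M$.}
\begin{enumerate}[label=(\roman*)]
\item If $M=A$ is a genuine subset, then every coordinate of $e_A$ is nonzero. Indeed, for each $\sigma$ the coordinate is the product of the letters of $\bw$ at the distinct positions $\sigma(A)$, which is a subword of $\bw$.
\item If $M\neq N$ are multisets and at least one of them is a set, then $e_M\neq e_N$.
\end{enumerate}
Fact (ii) is the step I expect to need the most care. If $|M|\neq|N|$, any nonzero coordinate has the wrong length. If $|M|=|N|$, pick an index $i$ occurring more often in $M$ than in $N$ and an index $j$ occurring more often in $N$. Pick two distinct letters $x,y$ of $\bw$ and a permutation $\sigma$ with $b_{\sigma(i)}=x$, $b_{\sigma(j)}=y$. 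Then the $x$-count of the $\sigma$-coordinate differs between $e_M$ and $e_N$. One side must be nonzero, namely the set side, so the two $\sigma$-coordinates differ. Here the hypothesis $|c(\bw)|\ge 2$ is used.

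\textbf{The ideal and the quotient.} Let $P=\{e_A : \emptyset\neq A\subseteq\{1,\dots,n\}\}$ and $J=T\setminus P$. I will check three things.
\begin{enumerate}[label=(\alph*)]
\item $J$ is a multiplicative ideal. Every element of $T$ is a sum of $e_M$'s, so by distributivity it suffices to show $e_Ae_B=e_{A\uplus B}\notin P$ whenever $A\cap B\ne\emptyset$; this is exactly fact (ii). Any sum involving such a term is then also not in $P$ by the argument in (b).
\item $J$ is an order filter. Suppose $r\in P$, $t\in T$ and $t\le r$ with $t\in P$. Every coordinate of $r$ is nonzero by (i), and in a flat semiring $t_\sigma\le r_\sigma$ forces $t_\sigma=r_\sigma$ or $r_\sigma=0$. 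Hence $t\le r$ forces $t=r$, so nothing in $P$ lies strictly above an element of $P$.
\item The quotient is the target. The same computation shows that a sum $p+q$ with $p\ne q$ has a zero coordinate, so it lies in $J$. Combined with (a) and (b), $T/J$ consists of the classes of the distinct elements $e_A$ together with the class $J$ acting as zero. Moreover $e_Ae_B=e_{A\cup B}$ if $A\cap B=\emptyset$, and lies in $J$ otherwise.
\end{enumerate}

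Therefore $e_A\mapsto\prod_{i\in A}a_i$, $J\mapsto 0$ is an isomorphism $T/J\cong S_c(a_1\cdots a_n)$. This shows $S_c(a_1\cdots a_n)\in\mathsf V(S_c(\bw))$.
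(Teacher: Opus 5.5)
Note first that the paper does not prove this lemma; it is quoted from Jackson et al. The closest in-paper comparison is the device of Lemma~\ref{a1tam}: take a subsemiring of a direct power, then a Rees-type quotient. Your architecture is exactly that, and it is sound. However, the justification of fact (ii) in the case $|M|=|N|$ is wrong as written.

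An arbitrary $\sigma$ with $b_{\sigma(i)}=x$ and $b_{\sigma(j)}=y$ need not make the $x$-counts differ, because other indices sent to $x$-positions can cancel the contribution of $i$. For example, take $\bw=x^2y^2$ with $b_1=b_2=x$ and $b_3=b_4=y$. Let $M=\{1,3\}$, $N=\{2,4\}$, $i=1$, $j=2$, and $\sigma\colon 1\mapsto 1,\ 2\mapsto 3,\ 3\mapsto 4,\ 4\mapsto 2$. Then both $\sigma$-coordinates equal $xy$.

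The repair is short. Let $\Delta(k)$ be the multiplicity of $k$ in $M$ minus that in $N$, and let $S_\sigma=\{k : b_{\sigma(k)}=x\}$. The difference of the $x$-counts is then $\sum_{k\in S_\sigma}\Delta(k)$. Replacing $\sigma$ by $\sigma\circ(i\ j)$ replaces $S_\sigma$ by $(S_\sigma\setminus\{i\})\cup\{j\}$, which lowers this sum by $\Delta(i)-\Delta(j)>0$. So one of $\sigma$, $\sigma\circ(i\ j)$ gives distinct coordinates, the set side being nonzero by (i).

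Step (a) is also not established by the reduction you state, because $J$ contains sums of distinct members of $P$. For instance, when $n\ge 3$, $s=e_{\{1\}}+e_{\{2\}}\in J$ and $t=e_{\{3\}}$ give $ts=e_{\{1,3\}}+e_{\{2,3\}}$, which involves no product $e_Ae_B$ with $A\cap B\neq\emptyset$.

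A complete argument goes as follows. Write $t=\sum_k e_{M_k}$ and $s=\sum_l e_{N_l}$, and suppose $ts=e_C\in P$. The flat-order computation of (b) forces every summand $e_{M_k\uplus N_l}$ to equal $e_C$, so $M_k\uplus N_l=C$ by (ii). Fixing $k$, every $N_l$ equals the set $C\setminus M_k$, and hence $s\in P$.

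In (b), the hypothesis ``with $t\in P$'' and the conclusion ``nothing in $P$ lies strictly above an element of $P$'' are misstated. Your computation nonetheless shows what is needed: $t\le r\in P$ forces $t=r$, so $J$ is upward closed. The same argument also shows that $J$ is precisely the set of elements of $T$ not dividing $e_{\{1,\dots,n\}}$; this is how Lemma~\ref{a1tam} gets the ideal and filter properties for free.

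With these repairs, $T/J\cong S_c(a_1\cdots a_n)$ follows as in your (c).
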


\begin{lemma}[{\cite[Proposition 2.7]{wzr}}]\label{lemsc2}
Let $n\geq 1$ be an integer.
Then $S_c(a_1\cdots a_n)$ belongs to $\mathsf{V}(S(a^{n+1}))$.
\end{lemma}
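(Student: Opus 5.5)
The plan is to argue at the level of identities rather than constructing an explicit subdirect representation. Note that $S(a^{n+1})=S_c(a^{n+1})$, whose nonzero elements are $a,a^2,\dots,a^{n+1}$. We must show that every identity of $S(a^{n+1})$ holds in $S_c(a_1\cdots a_n)$. By the remarks in Section 1 it suffices to consider identities of the form $\bu\approx\bu+\bq$ with $\bq$ a word. So suppose such an identity fails in $S_c(a_1\cdots a_n)$ under some assignment $\varphi$. We will produce an assignment into $S(a^{n+1})$ under which it also fails.

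First we unpack what failure means in a flat semiring. In a flat semiring a sum is nonzero only if all summands are equal and nonzero. Hence failure means that every word of $\bu$ evaluates to the same element $a_T$, where $\emptyset\neq T\subseteq\{1,\dots,n\}$ and $a_T$ denotes the product of the $a_i$ with $i\in T$. It also means $\varphi(\bq)\neq a_T$. No letter of $c(\bu)$ can be sent to $0$. For each letter $x$ with $\varphi(x)\neq 0$, write $\varphi(x)=a_{T_x}$. For a word $\bw$ whose letters all have nonzero images, let $m(\bw)\in\mathbb N^n$ be the multiplicity vector of the product $\prod a_{T_x}$ computed in the free commutative semigroup, and write $|m|=\sum_j m_j$. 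Then $\varphi(\bw)=a_{T'}$ exactly when $m(\bw)$ is the $0/1$ indicator vector of $T'$, and $\varphi(\bw)=0$ otherwise.

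Next we define $n+1$ assignments $\psi_0,\psi_1,\dots,\psi_n$ into $S(a^{n+1})$. A letter $x$ with $\varphi(x)=0$ is sent to $0$ by every $\psi_i$. Otherwise we set $\psi_0(x)=a^{|T_x|}$ and $\psi_i(x)=a^{|T_x|+[i\in T_x]}$ for $1\le i\le n$. Exponents above $n+1$ are read as $0$, and since the letters of $\bu$ have $|T_x|\ge 1$, all exponents involved are positive. This is multiplicative, so for any word $\bw$ we get $\psi_0(\bw)=a^{|m(\bw)|}$ and $\psi_i(\bw)=a^{|m(\bw)|+m(\bw)_i}$, again read as $0$ when the exponent exceeds $n+1$. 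For words of $\bu$ the exponent is $|T|+[i\in T]\le n+1$. Hence $\psi_i(\bu)$ is a single nonzero element, and it suffices to show that $\psi_i(\bq)$ differs from it for some $i$.

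Finally we run a case analysis on $\bq$.
\begin{enumerate}[label=(\roman*)]
\item If some letter of $\bq$ is sent to $0$ by $\varphi$, then $\psi_0(\bq)=0\neq\psi_0(\bu)$.
\item Otherwise, let $m=m(\bq)$. If $|m|\neq|T|$, then $\psi_0$ separates: either the exponents differ, or $\psi_0(\bq)=0$.
\item If $|m|=|T|$ and $m$ is not a $0/1$ vector, pick $j$ with $m_j\ge2$. Then $\psi_j(\bq)$ has exponent at least $|T|+2$, so it is either $0$ or a power distinct from $a^{|T|+[j\in T]}$.
\item If $|m|=|T|$ and $m$ is the indicator of some $T'\neq T$, then $|T'|=|T|$. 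Pick $i\in T'\setminus T$. Then $\psi_i(\bq)=a^{|T|+1}\neq a^{|T|}=\psi_i(\bu)$.
\end{enumerate}
In every case $S(a^{n+1})\nvDash\bu\approx\bu+\bq$, which proves the lemma.

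The main obstacle is choosing assignments fine enough to distinguish both a repeated letter and a different subset of the same size, while staying inside the bound $n+1$. The "add one on coordinate $i$" device handles this, and it is exactly why the exponent $n+1$ rather than $n$ is needed.
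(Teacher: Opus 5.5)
Your proof is correct. The reduction to identities $\bu\approx\bu+\bq$ is sound, and so is the description of $\varphi(\bw)$ through the multiplicity vector $m(\bw)$. The formula $\psi_i(\bw)=a^{|m(\bw)|+m(\bw)_i}$ holds because all letter exponents are positive, so no partial product overflows before the full one does. Cases (i)--(iv) are exhaustive.

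The paper does not prove this lemma itself; it quotes it from \cite[Proposition~2.7]{wzr}. The comparable argument it does give, Lemma~\ref{a1tam}, is structural rather than equational. It takes the subsemiring of a direct power of $S(a^N)$ generated by suitable tuples, collapses the ideal of elements not dividing the top product, and identifies the Rees quotient by a unique-representation argument.

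Your maps are the coordinates of exactly such a construction here. In $S(a^{n+1})^n$ let $\mathbf a_j$ have $a^2$ in position $j$ and $a$ elsewhere.
\begin{itemize}
\item A product with multiplicity vector $m$ has $i$th coordinate $a^{|m|+m_i}$.
\item The elements dividing $\mathbf a_1\cdots\mathbf a_n=(a^{n+1},\dots,a^{n+1})$ are exactly the products with $\bzero\neq m\le(1,\dots,1)$.
\item The Rees quotient is $S_c(a_1\cdots a_n)$ because $m\mapsto(|m|+m_i)_i$ is injective.
\end{itemize}

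Correspondingly, your $\psi_0$ is dispensable, and cases (ii)--(iv) reduce to that injectivity. If $\psi_i(\bq)=\psi_i(\bu)$ for all $1\le i\le n$, then $|m|+m_i=|T|+[i\in T]$ for each $i$. Summing over $i$ gives $|m|=|T|$, hence $m$ is the indicator vector of $T$ and $\varphi(\bq)=\varphi(\bu)$.

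Your route spares you checking that the non-divisors form an ideal and an order filter, and that the quotient has the right multiplication. The structural route instead yields an explicit representation of $S_c(a_1\cdots a_n)$ as a Rees quotient of a subsemiring of $S(a^{n+1})^n$. That is the style of Lemma~\ref{a1tam} and Proposition~\ref{mcwmw}, although the paper does use your equational style once, to show $\flat(\mathbb Z)\in\mathsf V(\mathbb Z)$.
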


From the above two lemmas, we can obtain a characterization of when $S_c(W)$ is nilpotent.

\begin{proposition}\label{nilpotentscw}
Let \(W\) be a nonempty set of words in \(X^+_c\). Then
\(S_c(W)\) is nilpotent if and only if \(\mathsf{V}(S_c(W))\) does not contain \(S_c(a_1\cdots a_n)\) for some \(n\geq 2\).
\end{proposition}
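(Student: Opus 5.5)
Both directions will be reduced to lengths of words in $W$, using the fact recorded above: $S_c(W)$ is $n$-nilpotent if and only if every word of $W$ has length less than $n$. The statement should be read as follows: $S_c(W)$ is nilpotent if and only if there is some $n\ge 2$ with $S_c(a_1\cdots a_n)\notin\mathsf V(S_c(W))$.

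\emph{Forward direction.} Suppose $S_c(W)$ is $k$-nilpotent for some $k\ge 2$. Then it satisfies the identity $x_1x_2\cdots x_k\approx y_1y_2\cdots y_k$, since both sides always evaluate to $0$. Every member of $\mathsf V(S_c(W))$ satisfies this identity as well. However, $S_c(a_1\cdots a_k)$ is not $k$-nilpotent: the assignment $x_i\mapsto a_i$ sends the left side to $a_1\cdots a_k\neq 0$, while the assignment $y_i\mapsto a_1$ sends the right side to $a_1^k=0$ for $k\ge 2$. So $S_c(a_1\cdots a_k)\notin\mathsf V(S_c(W))$ with $n=k\ge 2$. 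If $S_c(W)$ is $1$-nilpotent (degenerate), it is also $2$-nilpotent, so the same argument applies.

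\emph{Converse.} Suppose $S_c(W)$ is not nilpotent. Then for every $m$ there is a word of length at least $m$ in $W$. Fix $n\ge 2$; we show $S_c(a_1\cdots a_n)\in\mathsf V(S_c(W))$. Choose $\bw\in W$ with $\ell(\bw)\ge n+1$. Since subwords are closed under deleting letters, $\bw$ has a subword $\bu$ of length exactly $n+1$. By Corollary~\ref{wwi}, $S_c(\bu)\in\mathsf V(S_c(W))$. There are two cases.
\begin{itemize}
\item If $|c(\bu)|\ge 2$, take a subword $\bu'$ of $\bu$ of length $n$ with $|c(\bu')|\ge2$. This is possible because $n\ge2$: delete one letter whose content survives, for instance one occurrence of a letter of multiplicity at least $2$, or, if all letters are distinct, any letter. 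Then Lemma~\ref{lemsc1} gives $S_c(a_1\cdots a_n)\in\mathsf V(S_c(\bu'))\subseteq\mathsf V(S_c(W))$.
\item If $\bu=a^{n+1}$, then Lemma~\ref{lemsc2} gives the claim directly. I read $S(a^{n+1})$ in Lemma~\ref{lemsc2} as $S_c(a^{n+1})$, which is the same algebra for a one-letter word.
\end{itemize}

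The only step needing care is the subword choice in the first case, which is easy. The main point of the argument is simply the translation between nilpotence and bounded word length.
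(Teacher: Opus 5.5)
Your proof is correct and follows essentially the same route as the paper. The forward direction uses that $k$-nilpotence, which you make explicit via the identity $x_1\cdots x_k\approx y_1\cdots y_k$, passes to the whole variety while $S_c(a_1\cdots a_k)$ is not $k$-nilpotent; the converse combines Corollary~\ref{wwi} with Lemmas~\ref{lemsc1} and~\ref{lemsc2}, and your choice of a length-$n$ subword $\bu'$ with $|c(\bu')|\ge 2$ fills a small step the paper leaves implicit (it applies Lemma~\ref{lemsc1} to a word of length $n+1$, which directly gives only $S_c(a_1\cdots a_{n+1})$).
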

\begin{proof}
Suppose that \(S_c(W)\) is nilpotent.
Then it is \(n\)-nilpotent for some \(n\geq 2\), and consequently
every flat semiring in \(\mathsf{V}(S_c(W))\) is \(n\)-nilpotent.
It follows that \(\mathsf{V}(S_c(W))\) does not contain \(S_c(a_1\cdots a_n)\),
since the latter is not \(n\)-nilpotent.

Conversely, assume that \(S_c(W)\) is not nilpotent. Then \(W^\leq\) contains words of arbitrary length.
Hence for any \(n\geq 2\), there exists a word \(\mathbf{w}\) in \(W^\leq\) such that \(\ell(\mathbf{w})=n+1\).
By Corollary~\ref{wwi}, \(S_c(\mathbf{w}) \in \mathsf{V}(S_c(W))\).
By Lemmas~\ref{lemsc1} and~\ref{lemsc2}, \(S_c(a_1\cdots a_n)\in \mathsf{V}(S_c(\mathbf{w}))\).
Therefore, \(S_c(a_1\cdots a_n)\in \mathsf{V}(S_c(W))\).
\end{proof}

The following result generalizes Lemma~\ref{lemsc2} and further illustrates the special nature of $S(a^n)$.

\begin{lemma}\label{a1tam}
Let $\bw$ be a word in $X_c^+$. Then $S_c(\bw)$ belongs to $\mathsf{V}(S(a^n))$ for some $n\geq 1$.
\end{lemma}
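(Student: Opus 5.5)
Given a word $\bw\in X_c^+$ with $\ell(\bw)=m$, we aim to show $S_c(\bw)\in\mathsf V(S(a^n))$ for suitable $n$. Note $S(a^n)=S_c(a^n)$, since a one-letter word commutes with itself.

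\emph{Plan A: a direct subdirect embedding.} Write $\bw=x_1^{k_1}\cdots x_r^{k_r}$. Choose positive integers $N_1,\dots,N_r$ that grow fast enough that the integer vector $(e_1,\dots,e_r)$ with $0\le e_i\le k_i$ is determined by the sum $\sum e_iN_i$. For example, take $N_i=(m+1)^{i}$, with $m=\ell(\bw)$. Put $n=\sum k_iN_i$. The map $x_1^{e_1}\cdots x_r^{e_r}\mapsto a^{\sum e_iN_i}$ is injective on $\{\bw\}^{\leq}$, and it is multiplicative whenever a product stays inside $\{\bw\}^{\leq}$.

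The difficulty is that two subwords whose product is $0$ in $S_c(\bw)$ may have images whose product is still nonzero in $S(a^n)$. An example is $x_1^{e}\cdot x_1^{e'}$ with $e+e'>k_1$ but $(e+e')N_1\le n$. So this map is not a homomorphism. To fix it, one would work in a power $S(a^n)^T$ and use a family of such "weight" maps $\varphi_t$ together with a Rees quotient. The idea is to arrange that a product is nonzero in every coordinate exactly when it lies in $\{\bw\}^{\le}$. The natural choice is to take, for each $i$, one coordinate that heavily weights $x_i$, so that it overflows precisely when $e_i>k_i$. That gives the subalgebra generated by the tuples $(a^{\varphi_t(x_i)})_t$. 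Then we must show this subalgebra, modulo an ideal, is $S_c(\bw)$. Arranging exact overflow simultaneously for all $i$ is delicate, because the same coordinate must also not overflow for legitimate subwords.

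\emph{Plan B: induction, which I expect to be cleaner.} We combine the earlier lemmas. By Lemma~\ref{lemsc1}, if $|c(\bw)|\ge2$ then $S_c(a_1\cdots a_m)\in\mathsf V(S_c(\bw))$. That is the wrong direction for us, so instead we use that $S_c(\bw)$ is a quotient of a subalgebra of a product involving $S_c(a_1\cdots a_N)$ for large $N$. Concretely, we embed $S_c(\bw)$ in $S_c(a_1\cdots a_N)$ with $N=m$ by sending the variable $x_i$ to a suitable element. Since $a_1\cdots a_m$ has distinct letters, $x_i^{k_i}$ cannot be realized by powers of a single element, so a direct embedding fails.

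Instead we use several coordinates. For each subword $\bu\in\{\bw\}^{\le}$ we need a homomorphic separation. This fails for the same reason, so we go back to Plan A and make it rigorous with coordinates indexed by $i$. In coordinate $i$ we send $x_i\mapsto a^{M}$ and $x_j\mapsto a$ for $j\ne i$, with $n=Mk_i+(m-k_i)$ and $M>m$. A product $x_1^{e_1}\cdots x_r^{e_r}$ is then nonzero in coordinate $i$ exactly when $Me_i+\sum_{j\ne i}e_j\le Mk_i+m-k_i$. Because $\sum_{j\ne i}e_j<M$, this holds iff $e_i<k_i$, or $e_i=k_i$ and $\sum_{j\ne i}e_j\le m-k_i$. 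Using a common $n$ across coordinates, or distinct $n_i$ as the lemma allows via the single variety $\mathsf V(S(a^{n}))$ with $n$ the largest since $S(a^{n'})\in\mathsf V(S(a^n))$ for $n'\le n$, and adding one more coordinate for total length $\le m$, a product is nonzero in all coordinates iff $e_i\le k_i$ for all $i$. This is exactly membership in $\{\bw\}^{\le}$.

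Two injectivity steps remain. First, injectivity on subwords: distinct vectors $(e_j)$ must give distinct images. Coordinate $i$ determines $Me_i+\sum_{j\neq i}e_j$, and together these determine each $e_i$ because $M>m$. Second, the set of tuples with some zero coordinate forms an ideal and an order filter. Quotienting by it yields $S_c(\bw)$, since flat quotients correspond to ideals. The main obstacle is checking that $S(a^{n'})\in\mathsf V(S(a^n))$ for $n'\le n$, which holds because $S(a^{n'})$ is a Rees quotient of $S(a^n)$.
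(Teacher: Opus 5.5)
Your final construction is correct. It has the same architecture as the paper's proof: a subsemiring of a finite product of semirings $S(a^{n_i})$, generated by tuples with one heavy coordinate per letter, followed by a Rees quotient. The two proofs differ in how they handle the obstacle you identified in Plan A.

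Take $M>m$ and $n_i=(M-1)k_i+m$. A product $\mathbf{x}_1^{e_1}\cdots\mathbf{x}_r^{e_r}$ then survives in coordinate $i$ whenever $e_j\le k_j$ for all $j$. On the other hand, $e_i\ge k_i+1$ already gives $Me_i+\sum_{j\ne i}e_j\ge Mk_i+M>n_i$, whatever the other exponents are. So overflow is exact, and your extra length coordinate is harmless but unnecessary. Each $e_i$ is recovered by division by $M$, and you may quotient by the natural ideal of tuples with a zero coordinate.

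The paper keeps every coordinate in one $S(a^n)$, with $n=t_1\cdots t_m+t_1+\cdots+t_m$. It tunes the diagonal weights $k_i=\prod_{j\ne i}t_j+1$ so that $\bw$ maps to the diagonal element $\mathbf{a}^n$. These weights can be too small for exact overflow. For $a_1a_2a_3a_4$, for instance, $\mathbf{a}_1^2=(a^4,a^2,a^2,a^2)\neq 0$ in $S(a^5)^{4}$. That is why the paper quotients by the non-divisors of $\mathbf{a}^n$ rather than by tuples with a zero coordinate. It then uses $\det\neq 0$ both to describe $A\setminus J$ and to get uniqueness of representations.

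Your route replaces the determinant by an elementary base-$M$ argument. The price is thresholds that vary by coordinate, which you correctly justify: $S(a^{n'})$ is a Rees quotient of $S(a^n)$ for $n'\le n$. Drop the ``common $n$'' alternative you mention, though. With your generators and the zero-coordinate ideal it fails: for $x_1^3x_2$ with $M=5$ and common $n=16$, the tuple $\mathbf{x}_2^2=(a^2,a^{10})$ is nonzero even though $x_2^2$ is not a subword.
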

\begin{proof}
We may write $\bw=a_1^{t_1}a_2^{t_2}\cdots a_m^{t_m}$,
where $a_1, \ldots, a_m$ are distinct letters and $t_1, \ldots, t_m$ are positive integers.
Let $n$ denote the number
\[
n = t_1 t_2 \cdots t_m + t_1 + t_2 + \cdots + t_m.
\]
We shall show that $S_c(\bw) \in \mathsf{V}(S(a^n))$.

Consider the direct product $(S(a^n))^{\times m}$ of $m$ copies of $S(a^n)$,
and let $A$ denote the subsemiring of $(S(a^n))^{\times m}$ generated by the elements
\[
\mathbf{a}_1 = (a^{k_1}, a, a, \ldots, a),\;
\mathbf{a}_2 = (a, a^{k_2}, a, \ldots, a),\;
\ldots,\;
\mathbf{a}_m = (a, a, \ldots, a, a^{k_m}),
\]
where $k_i = \frac{t_1 t_2 \cdots t_m}{t_i} + 1$ for each $1 \leq i \leq m$.
Let $\mathbf{a}$ denote the element $(a, a, \ldots, a)$ in $(S(a^n))^{\times m}$.
Then
\begin{align}\label{t1tn}
\mathbf{a}_1^{t_1} \mathbf{a}_2^{t_2} \cdots \mathbf{a}_m^{t_m} = \mathbf{a}^n.
\end{align}

Now let $J$ be the set of all elements of $A$ that do not divide $\mathbf{a}^n$ in $A$.
Since $J$ contains all tuples with a zero coordinate,
it follows that $J$ is both a multiplicative ideal and an additive order filter of $A$.
This allows us to consider the quotient semiring $A/J$,
which is a flat semiring consisting of the zero element $J$ and all singletons $\{\mathbf{b}\}$ for $\mathbf{b} \in A\setminus J$,
where each such $\mathbf{b}$ is of the form $\mathbf{a}_1^{x_1}\mathbf{a}_2^{x_2}\cdots\mathbf{a}_m^{x_m}$
with $x_i \geq 0$ for all $1 \leq i \leq m$.

Take an arbitrary element $\mathbf{a}_1^{x_1}\mathbf{a}_2^{x_2}\cdots\mathbf{a}_m^{x_m}$ of $A \setminus J$.
Then it divides $\mathbf{a}^n$ in $A$,
and so there exist nonnegative integers $y_1, \ldots, y_m$ such that
\[
\mathbf{a}_1^{x_1+y_1}\mathbf{a}_2^{x_2+y_2}\cdots\mathbf{a}_m^{x_m+y_m} = \mathbf{a}^n.
\]
Consequently, $(x_1+y_1, x_2+y_2, \ldots, x_m+y_m)$ is a solution of the system of linear equations
\begin{align}\label{lin-system}
\begin{pmatrix}
k_1 & 1  & \cdots & 1 \\
1 & k_2  & \cdots & 1 \\
\vdots & \vdots & \ddots & \vdots \\
1 & 1  & \cdots & k_m
\end{pmatrix}
\begin{pmatrix}
z_1 \\ z_2 \\ \vdots \\ z_m
\end{pmatrix}
=
\begin{pmatrix}
n \\ n \\ \vdots \\ n
\end{pmatrix}.
\end{align}
Let $M$ denote its coefficient matrix. Its determinant is
\[
\det(M)=\left(1 + \sum_{i=1}^m \frac{1}{k_i - 1}\right) \prod_{i=1}^m (k_i - 1)\neq 0.
\]
Hence the linear system~\eqref{lin-system} has at most one nonnegative integer solution.
By \eqref{t1tn}, \((t_1,\dots,t_m)\) is also a solution of
\eqref{lin-system}; hence \(x_i+y_i=t_i\) for all \(1\leq i\leq m\).
Hence
\[
A \setminus J = \{\mathbf{a}_1^{x_1} \cdots \mathbf{a}_m^{x_m} \mid 0 \leq x_i \leq t_i,\ 1 \leq i \leq m,\ (x_1,\dots,x_m) \neq (0,\dots,0)\}.
\]

Now suppose that two such products are equal:
\[
\mathbf{a}_1^{x_1}\mathbf{a}_2^{x_2} \cdots \mathbf{a}_m^{x_m}
=
\mathbf{a}_1^{y_1}\mathbf{a}_2^{y_2} \cdots \mathbf{a}_m^{y_m}
\]
with $0 \leq x_i, y_i \leq t_i$ for all $1 \leq i \leq m$.
Comparing exponents in each coordinate gives
\[
M(x_1 - y_1, \dots, x_m - y_m)^T = (0, \dots, 0)^T.
\]
Since $\det(M) \neq 0$, it follows that $x_i = y_i$ for all $1\leq i\leq m$.
Thus the representation of each element of $A \setminus J$ is unique.

With the above observations, it follows that $A/J$ is isomorphic to $S_c(a_1^{t_1} \cdots a_m^{t_m})$
under the mapping $\mathbf{a}_i/J \mapsto a_i$. Hence $S_c(a_1^{t_1} \cdots a_m^{t_m})$ belongs to $\mathsf{V}(S(a^n))$, as required.
\end{proof}

\begin{proposition}\label{scwa+}
Let $W$ be a nonempty set of words in $X^+_c$. Then $S_c(W)$ belongs to the variety $\mathsf{V}(S_c(\{a\}^+))$.
Consequently, $\mathsf{V}(S_c(\{a\}^+))$ is the maximum variety among those of the form $\mathsf{V}(S_c(W))$.
\end{proposition}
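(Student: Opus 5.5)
The plan is to reduce the statement to single words and then use Lemma~\ref{a1tam}. First observe that $\{a\}^+=\{a,a^2,a^3,\dots\}$ is closed under taking nonempty subwords. Hence $S_c(\{a\}^+)$ has underlying set $\{a^k\mid k\geq 1\}\cup\{0\}$, and every product $a^ia^j=a^{i+j}$ is nonzero. Since $\{a^n\}\subseteq\{a\}^+$, Corollary~\ref{wwi} (applied with $W=\{a\}^+$, using that $a^n\in(\{a\}^+)^{\leq}$) gives $S_c(a^n)\in\mathsf{V}(S_c(\{a\}^+))$ for every $n\geq1$. Here I will note that over a single letter the commutative and noncommutative constructions agree, so $S(a^n)=S_c(a^n)$.

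Now let $W$ be arbitrary. By Corollary~\ref{wwi},
\[
\mathsf{V}(S_c(W))=\mathsf{V}(S_c(\bw)\mid \bw\in W).
\]
It therefore suffices to show that $S_c(\bw)\in\mathsf{V}(S_c(\{a\}^+))$ for each single word $\bw\in W$. By Lemma~\ref{a1tam} there is some $n\geq1$ with $S_c(\bw)\in\mathsf{V}(S(a^n))$. By the first paragraph, $S(a^n)\in\mathsf{V}(S_c(\{a\}^+))$, so $\mathsf{V}(S(a^n))\subseteq\mathsf{V}(S_c(\{a\}^+))$. Hence $S_c(\bw)$ lies in $\mathsf{V}(S_c(\{a\}^+))$. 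Since the generators of $\mathsf{V}(S_c(W))$ all lie in this variety, so does $S_c(W)$.

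For the ``consequently'' part: $\mathsf{V}(S_c(\{a\}^+))$ is itself of the form $\mathsf{V}(S_c(W))$, namely with $W=\{a\}^+$. By the above, every variety $\mathsf{V}(S_c(W))$ is contained in it, so it is the maximum. No step seems to be a real obstacle. The only point needing care is the identification $S(a^n)=S_c(a^n)$, so that Lemma~\ref{a1tam}'s $S(a^n)$ matches a commutative object to which Corollary~\ref{wwi} applies. This holds because a one-letter free semigroup is already commutative, and so the subword sets and partial multiplications coincide.
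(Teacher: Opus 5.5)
Your proposal is correct and follows essentially the same route as the paper. Both arguments reduce to single words via Corollary~\ref{wwi}, apply Lemma~\ref{a1tam} to each $\bw\in W$, and use Corollary~\ref{wwi} again to place every $S_c(a^n)$ in $\mathsf{V}(S_c(\{a\}^+))$. The differences are cosmetic: you use only the inclusion $S_c(a^n)\in\mathsf{V}(S_c(\{a\}^+))$ rather than the paper's equality $\mathsf{V}(S_c(a^n)\mid n\geq 1)=\mathsf{V}(S_c(\{a\}^+))$, and you spell out two points the paper leaves implicit, namely $S(a^n)=S_c(a^n)$ and that $\mathsf{V}(S_c(\{a\}^+))$ is itself of the form $\mathsf{V}(S_c(W))$.
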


\begin{proof}
By Corollary~\ref{wwi}, we have that $\mathsf{V}(S_c(W)) = \mathsf{V}(S_c(\bw) \mid \bw \in W)$.
By Lemma~\ref{a1tam}, for each $\bw \in W$ there exists $n \geq 1$ such that $S_c(\bw) \in \mathsf{V}(S_c(a^n))$,
and so $\mathsf{V}(S_c(\bw) \mid \bw \in W)$ is a subvariety of $\mathsf{V}(S_c(a^n) \mid n \geq 1)$.
Finally, by Corollary~\ref{wwi} again, $\mathsf{V}(S_c(a^n) \mid n \geq 1) = \mathsf{V}(S_c(\{a\}^+))$.
Hence $\mathsf{V}(S_c(W))$ is a subvariety of $\mathsf{V}(S_c(\{a\}^+))$,
and so $S_c(W)$ belongs to $\mathsf{V}(S_c(\{a\}^+))$.
\end{proof}

\begin{corollary}\label{coro26061420}
Let $W$ be a nonempty set of words in $X^+_c$.
Then $\mathsf{V}(S_c(W))=\mathsf{V}(S_c(\{a\}^+))$ if and only if
$S_c(W)$ is not $k$-nil for any $k\geq 1$.
\end{corollary}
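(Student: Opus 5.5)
Both implications come straight from the results above.

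For the forward direction, assume $\mathsf{V}(S_c(W))=\mathsf{V}(S_c(\{a\}^+))$ and suppose, for a contradiction, that $S_c(W)$ is $k$-nil for some $k\geq 1$. Being $k$-nil is expressed by the identity $x^k\approx x^k+y$: for a flat semiring with zero $0$ it holds exactly when every $k$th power is $0$. Indeed, in a flat semiring $0+b=0$ for all $b$, so if $x^k=0$ both sides equal $0$. Conversely, if some $a^k\neq 0$, choose $y$ with $y\neq a^k$; then $a^k+y=0\neq a^k$. Such a $y$ exists because a nontrivial flat semiring has a nonzero element $a^k$, and we may take $y=0$. Hence the identity holds throughout $\mathsf{V}(S_c(W))$, and so it holds in $S_c(\{a\}^+)$. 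But there $a^k\neq 0$, since $a^k$ is a word of $\{a\}^+$. This contradiction shows $S_c(W)$ is not $k$-nil for any $k\geq 1$.

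For the converse, assume $S_c(W)$ is not $k$-nil for any $k$. Then for each $k\geq 1$ there is a word $\bw\in W^{\leq}$ with $\bw^k\neq 0$ in $S_c(W)$, that is, $\bw^k\in W^{\leq}$. Write $x$ for a letter occurring in $\bw$. Then $x^k$ is a subword of $\bw^k$ in $X_c^+$, so $x^k\in W^{\leq}$. By Corollary~\ref{wwi}, $S_c(x^k)\in\mathsf{V}(S_c(W))$, and $S_c(x^k)$ is isomorphic to $S_c(a^k)$ by renaming the letter. Since $k$ was arbitrary, every $S_c(a^k)$ with $k\geq 1$ lies in $\mathsf{V}(S_c(W))$. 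By Corollary~\ref{wwi} applied to $\{a\}^+$, $\mathsf{V}(S_c(\{a\}^+))=\mathsf{V}(S_c(a^k)\mid k\geq 1)$, so $\mathsf{V}(S_c(\{a\}^+))\subseteq \mathsf{V}(S_c(W))$. The reverse inclusion is Proposition~\ref{scwa+}, which gives equality.

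No step is a genuine obstacle. The only points needing care are that $k$-nil is captured by a single identity, and that passing from $\bw^k$ to a pure power $x^k$ of one letter is legitimate; both were checked above.
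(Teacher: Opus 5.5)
Your proof is correct and follows the paper's argument. For the forward direction you make rigorous the paper's observation that $S_c(\{a\}^+)$ is not $k$-nil, by noting that $k$-nilness of a flat semiring is captured by the identity $x^k\approx x^k+y$; for the converse you combine Proposition~\ref{scwa+} with showing that every $S(a^k)$ lies in $\mathsf{V}(S_c(W))$. The one variation is that you obtain $S(a^k)$ from the single-letter subword $x^k$ of $\bw^k$ via Corollary~\ref{wwi}, whereas the paper takes the subsemiring of $S_c(\bw^k)$ generated by $\bw$; both work equally well.
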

\begin{proof}
Suppose that $\mathsf{V}(S_c(W))=\mathsf{V}(S_c(\{a\}^+))$.
It follows immediately that $S_c(W)$ is not $k$-nil for any $k\geq 1$,
since $S_c(\{a\}^+)$ is not $k$-nil for any $k\geq 1$.

Conversely, assume that $S_c(W)$ is not $k$-nil for any $k\geq 1$.
By Proposition~\ref{scwa+}, $\mathsf{V}(S_c(W))$ is a subvariety of $\mathsf{V}(S_c(\{a\}^+))$,
so it remains to prove that $\mathsf{V}(S_c(\{a\}^+))$ is a subvariety of $\mathsf{V}(S_c(W))$.
Since $\mathsf{V}(S_c(\{a\}^+))=\mathsf{V}(S(a^k)\mid k\geq 1)$,
we need to prove that $S(a^k)$ belongs to $\mathsf{V}(S_c(W))$ for all $k\geq 1$.
Let $k\geq 1$ be an arbitrary integer.
Since $S_c(W)$ is not $k$-nil, there exists a word $\bw\in W^{\leq}$
such that $\bw^k\in W^{\leq}$.
By Corollary~\ref{wwi}, $S_c(\bw^k)$ lies in $\mathsf{V}(S_c(W))$.
Let $A$ denote the subsemiring of $S_c(\bw^k)$ generated by the single element $\bw$.
It is easy to see that $A$ is isomorphic to $S(a^k)$.
Hence $S(a^k)$ belongs to $\mathsf{V}(S_c(W))$, as required.
\end{proof}

\begin{corollary}
$\mathsf{V}(S_c(X_c^+))=\mathsf{V}(S_c(\{a\}^+))$.
\end{corollary}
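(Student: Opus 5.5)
This follows from Corollary~\ref{coro26061420} applied to $W=X_c^+$. By that corollary, the equality $\mathsf{V}(S_c(X_c^+))=\mathsf{V}(S_c(\{a\}^+))$ holds if and only if $S_c(X_c^+)$ is not $k$-nil for any $k\geq 1$. So the plan is to check this non-nil condition directly from the construction of $S_c(X_c^+)$.

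First, since $X_c^+$ is closed under taking nonempty subwords, we have $(X_c^+)^{\leq}=X_c^+$. Hence every nonempty commutative word is a nonzero element of $S_c(X_c^+)$.

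Next, fix a letter $a\in X$ and any integer $k\geq 1$. The word $a^k$ lies in $X_c^+=(X_c^+)^{\leq}$. By the definition of the multiplication, the $k$th power of the element $a$ in $S_c(X_c^+)$ is therefore the word $a^k$ itself, not $0$. Thus $S_c(X_c^+)$ is not $k$-nil for any $k$.

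Corollary~\ref{coro26061420} now gives the stated equality. Every step is routine; the only point to verify is that $(X_c^+)^{\leq}=X_c^+$, which is immediate.
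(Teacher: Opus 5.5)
Your proposal is correct and follows the paper's proof: it applies Corollary~\ref{coro26061420} with $W=X_c^+$ and observes that $S_c(X_c^+)$ is not $k$-nil. You make the non-nil check explicit (the $k$th power of $a$ is $a^k\neq 0$), where the paper just calls it easy.
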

\begin{proof}
It is easy to see that $S_c(X_c^+)$ is not $k$-nil for any $k\geq 1$.
The claim now follows directly from Corollary~\ref{coro26061420}.
\end{proof}

Two words $\mathbf{u}$ and $\mathbf{v}$ in $X_c^{+}$ are \emph{similar}
if they differ only in the names of letters; that is, $\mathbf{u}$ can be obtained from $\mathbf{v}$ by renaming the letters \cite{carpi2002}.
For example, $x_1x_2x_3^2$, $x_1x_2^2x_3$, and $x_2x_3x_4^2$ are all similar.
If $\bw_1$ and $\bw_2$ are words in $X_c^{+}$,
then it is easy to verify that $S_c(\bw_1)$ is isomorphic to $S_c(\bw_2)$ if and only if $\bw_1$ and $\bw_2$ are similar.
Combining this with Corollary~\ref{wwi}, we immediately deduce the following proposition.

\begin{proposition}\label{nilpotent}
Let $W$ be a nonempty set of words in $X^+_c$.
If $W'$ is a subset of $W$ such that every word in $W$ is similar to a subword of some word in $W'$,
then $\mathsf{V}(S_c(W))=\mathsf{V}(S_c(W'))$.
\end{proposition}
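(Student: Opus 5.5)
We prove the two inclusions separately, using Corollary~\ref{wwi} in both directions together with the observation made just before the statement: $S_c(\bw_1)\cong S_c(\bw_2)$ whenever $\bw_1$ and $\bw_2$ are similar.

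For $\mathsf{V}(S_c(W'))\subseteq \mathsf{V}(S_c(W))$, note that $W'\subseteq W$. Hence, by Corollary~\ref{wwi},
\[
\mathsf{V}(S_c(W'))=\mathsf{V}(S_c(\bw)\mid \bw\in W')\subseteq \mathsf{V}(S_c(\bw)\mid \bw\in W)=\mathsf{V}(S_c(W)).
\]
Alternatively, one can observe that $J=S_c(W)\setminus W'^{\leq}$ is an ideal and an order filter, and that $S_c(W)/J\cong S_c(W')$.

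For the reverse inclusion, Corollary~\ref{wwi} gives $\mathsf{V}(S_c(W))=\mathsf{V}(S_c(\bw)\mid \bw\in W)$. So it suffices to show that $S_c(\bw)\in\mathsf{V}(S_c(W'))$ for each $\bw\in W$. Fix $\bw\in W$. By hypothesis there are a word $\bw'\in W'$ and a subword $\bu$ of $\bw'$ such that $\bw$ is similar to $\bu$. Since $\bu\in W'^{\leq}$, the final clause of Corollary~\ref{wwi} (applied to $W'$) gives $S_c(\bu)\in\mathsf{V}(S_c(W'))$. Because $\bw$ and $\bu$ are similar, $S_c(\bw)\cong S_c(\bu)$, and therefore $S_c(\bw)\in\mathsf{V}(S_c(W'))$. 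As $\bw\in W$ was arbitrary, this yields $\mathsf{V}(S_c(W))\subseteq\mathsf{V}(S_c(W'))$.

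The only step needing any care is the isomorphism for similar words, which the paper states without proof. If it has to be justified, we argue as follows. A renaming is a bijection between $c(\bw)$ and $c(\bu)$. It extends to an isomorphism of the free commutative semigroups on these contents, and this isomorphism carries subwords of $\bw$ bijectively onto subwords of $\bu$. Sending $0$ to $0$ then gives a bijection $S_c(\bw)\to S_c(\bu)$. This bijection preserves multiplication, because a product lies in $\bw^{\leq}$ exactly when its image lies in $\bu^{\leq}$. It also preserves addition, since both semirings are flat.
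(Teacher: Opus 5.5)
Your proof is correct and follows the paper's approach. The paper obtains this proposition directly by combining Corollary~\ref{wwi} with the fact that similar words yield isomorphic semirings $S_c(\bw)$, which is what you do; you additionally spell out the isomorphism that the paper leaves to the reader.
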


The following result is due to Wu et al.~\cite[Corollary 4.3]{wzr}.
\begin{lemma}\label{finite}
Let $W$ be a finite nonempty set of words in $X^+_c$.
Then $S_c(W)$ is finitely based if
and only if every word in $W$ is either a cube of a letter or a word of length at most $2$.
\end{lemma}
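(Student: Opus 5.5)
This lemma is quoted from Wu et al.~\cite[Corollary 4.3]{wzr}. If I had to prove it from scratch, I would treat the two directions separately, working throughout with the standard description of identities in flat semirings. A finite flat semiring $S$ satisfies $\bu\approx\bu+\bq$, with $\bq$ a word, if and only if every assignment $\varphi$ with $\varphi(\bq)\neq 0$ also gives $\varphi(\bu_i)=\varphi(\bq)$ for every summand $\bu_i$ of $\bu$.

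\emph{Sufficiency.} Assume every word of $W$ is a cube of a letter or has length at most $2$. Up to similarity, the words of $W^{\leq}$ are then among $a$, $a^2$, $a^3$ and $ab$. By Proposition~\ref{nilpotent} (together with Corollary~\ref{wwi}), $\mathsf{V}(S_c(W))$ is generated by one of finitely many families. The relevant maximal generators are $S_c(a^3)$, $S_c(ab)$ and $S_c(a^2)$, and there is also the case of $S_c(a)$ alone. So we only need to show finitely many concrete varieties are finitely based. Lemmas~\ref{lemsc1} and~\ref{lemsc2} shorten the list further, since for example $S_c(a)\in\mathsf{V}(S_c(a^2))$ and $S_c(a)\in\mathsf{V}(S_c(ab))$. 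For each remaining case I would write down an explicit finite set $\Sigma$ of identities. It would contain the nilpotency identity, $x_1x_2x_3x_4\approx y_1y_2y_3y_4$ or $x_1x_2x_3\approx y_1y_2y_3$ as appropriate. It would also contain commutativity and a few identities recording which products of length $2$ or $3$ may be nonzero, such as $x^2y\approx xy^2$ and $x^2y+xy\approx \dots$ in the cube case. I would then prove completeness. Every term is reduced modulo $\Sigma$ to a normal form determined by the set of "nonzero patterns" of its summands. By the criterion above, two normal forms agree in the generator only if they coincide. \textbf{Main obstacle here:} choosing $\Sigma$ so that the normal forms are genuinely separated, especially for the join $\mathsf{V}(S_c(a^3),S_c(ab))$, where cubes and mixed products interact. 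I would handle it by first proving that $\Sigma$ forces every word of length $\geq 4$, and every word $\bw$ with $\ell(\bw)=3$ and $|c(\bw)|\geq 2$, to behave like the absorbing element. What remains is then a finite combinatorial check.

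\emph{Necessity.} Suppose $W$ contains a word $\bw$ with $\ell(\bw)\geq 3$ that is not the cube of a letter. There are two cases.
\begin{itemize}
\item If $|c(\bw)|\geq 2$, then Lemma~\ref{lemsc1} and Corollary~\ref{wwi} give $S_c(a_1a_2a_3)\in\mathsf{V}(S_c(W))$.
\item If $\bw=a^m$ with $m\geq 4$, then Lemma~\ref{lemsc2} gives $S_c(a_1a_2a_3)\in\mathsf{V}(S(a^4))\subseteq\mathsf{V}(S_c(W))$.
\end{itemize}
Since $W$ is finite, $S_c(W)$ is finite and $n$-nilpotent for some $n$. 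I would then use a local-finiteness/compactness argument. For each $k$, construct a flat semiring $B_k$ (a hypergraph-type semiring built from a long "cycle" of letters) such that every $k$-generated subsemiring of $B_k$ lies in $\mathsf{V}(S_c(W))$, while $B_k$ itself violates some identity of $S_c(W)$. The violated identity would be a cyclic identity of the shape
\[
x_1x_2x_3+x_2x_3x_4+\cdots+x_kx_1x_2\approx x_1x_2x_3+\cdots+x_kx_1x_2+x_1x_2x_4.
\]
Its failure would be witnessed by $S_c(a_1a_2a_3)$-type behaviour along the cycle. The existence of such $B_k$ for all $k$ shows that no finite set of identities of $S_c(W)$ defines $\mathsf{V}(S_c(W))$.

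\textbf{Main obstacle here:} verifying that the $k$-generated subalgebras of $B_k$ lie in the variety. I would prove this by embedding each one into a direct product of Rees quotients isomorphic to subsemirings of $S_c(a_1a_2a_3)$ and of $S_c(W)$, using the subdirect representation via ideals recalled in the introduction.
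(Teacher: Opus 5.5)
The paper gives no proof of this lemma. It is quoted from Wu et al.\ (Corollary~4.3 there), so your outline has to stand on its own, and as it stands it does not. Start with the criterion you plan to use throughout: it is reversed. In a flat semiring $s+t=s$ holds if and only if $s=0$ or $t=s$. So $S\vDash\bu\approx\bu+\bq$ if and only if every assignment with $\varphi(\bu)\neq0$ (all $\varphi(\bu_i)$ equal and nonzero) also has $\varphi(\bq)=\varphi(\bu)$. For instance, $S_c(a)$ fails $x\approx x+x^2$, although your test is vacuous there because $a^2=0$. In the sufficiency direction you never write down a basis, and that is the whole content of that direction. You also overlook that Lemma~\ref{lemsc2} with $n=2$ gives $S_c(ab)\in\mathsf{V}(S(a^3))$. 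So the join you single out as the main obstacle is just $\mathsf{V}(S_c(a^3))$: either $W$ contains a cube and $\mathsf{V}(S_c(W))=\mathsf{V}(S_c(a^3))$, or all words in $W$ have length at most $2$.

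The necessity direction contains a step that fails. Read literally, your two demands on $B_k$ clash: the displayed identity has $k$ variables, so if it fails in $B_k$ it fails in a $k$-generated subsemiring. More seriously, no cyclic identity of this shape can separate, whatever the cycle length $m$. Since its $2$-generated subsemirings lie in $\mathsf{V}(S_c(W))$, $B_k$ is commutative, and being flat it is $0$-cancellative. A nonzero left side gives $\varphi(x_i)\,\varphi(x_{i+1})\varphi(x_{i+2})=\varphi(x_{i+3})\,\varphi(x_{i+1})\varphi(x_{i+2})\neq0$, hence $\varphi(x_i)=\varphi(x_{i+3})$ (indices mod $m$). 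If $3\nmid m$, all $\varphi(x_i)$ coincide and the identity holds in every commutative flat semiring, $B_k$ included. If $3\mid m$, the $3$-periodic assignment $x_{3r+j}\mapsto a_j$ falsifies it in $S_c(a_1a_2a_3)$, which you yourself placed in $\mathsf{V}(S_c(W))$.

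What is missing is a global obstruction that a cycle cannot provide, namely the non-$2$-colourability of the high-girth hypergraphs $\mathbb{H}_n$ in Theorem~\ref{thnfb}. Membership of $S_c(a_1a_2a_3)$ is also not enough by itself: for $S(a^4)$ or $S_c(\{a^3,abc\})$ the identity $\bt_{\mathbb{H}_n}\approx\bt_{\mathbb{H}_n}^2$ fails (send every vertex to $a$).

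One way to close the gap is as follows. Let $L$ be the maximal length of a word in $W$, and put $k=L-1$ if $L\geq4$ and $k=3$ if $L=3$. Then $S_c(a_1\cdots a_k)\in\mathsf{V}(S_c(W))$ by Lemmas~\ref{lemsc1}, \ref{lemsc2} and Corollary~\ref{wwi}. Moreover, $S_c(W)$ satisfies $\bt_{\mathbb{H}_n}\approx\bt_{\mathbb{H}_n}+x_{v_0}^k$ for a fixed vertex $v_0$. Indeed, suppose $\psi(\bt_{\mathbb{H}_n})=\bw\neq0$. Colouring by \emph{value of length $2$} excludes $\ell(\bw)=k+1$. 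Colouring by \emph{value equal to the letter $y$} excludes $\occ(y,\bw)<k$. Hence every vertex is sent to one letter $y$ and $\bw=y^k$. This identity fails in $S_{\mathbb{H}_n}$ because $\mathbf{a}_{v_0}^2=0$, so the proof of Theorem~\ref{thnfb} goes through unchanged with it in place of $\bt_{\mathbb{H}_n}\approx\bt_{\mathbb{H}_n}^2$.
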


\begin{proposition}\label{prop:scwnilpotentfb}
Let $W$ be a nonempty set of words in $X^+_c$.
If $S_c(W)$ is a nilpotent flat semiring,
then $S_c(W)$ is finitely based if and only if
every word in $W$ is either a cube of a letter or a word of length at most $2$.
\end{proposition}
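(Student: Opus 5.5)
The plan is to reduce the statement to the finite case, Lemma~\ref{finite}, using Proposition~\ref{nilpotent}. Suppose $S_c(W)$ is $n$-nilpotent. Then every word in $W$, and hence every word in $W^{\leq}$, has length less than $n$. There are infinitely many letters, but up to similarity there are only finitely many words of length less than $n$: a word of length $k$ is determined up to similarity by a partition of $k$ recording the exponents. So we can choose a finite subset $W'\subseteq W$ containing one representative from each similarity class that occurs in $W$. Every word of $W$ is then similar to a word of $W'$, and in particular to a subword of one. By Proposition~\ref{nilpotent}, $\mathsf{V}(S_c(W))=\mathsf{V}(S_c(W'))$. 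Consequently $S_c(W)$ is finitely based if and only if $S_c(W')$ is, since finite basability is a property of the generated variety.

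Next apply Lemma~\ref{finite} to the finite set $W'$. It says $S_c(W')$ is finitely based if and only if every word of $W'$ is either a cube of a letter or has length at most $2$. It remains to transfer this condition between $W'$ and $W$. Being a cube of a letter, and having length at most $2$, are both invariant under renaming letters, so they are preserved by similarity. Every word of $W$ is similar to a word of $W'$, and $W'\subseteq W$. Hence every word of $W$ satisfies the condition exactly when every word of $W'$ does. This gives both directions at once.

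The one point needing care is the choice of $W'$. Proposition~\ref{nilpotent} requires $W'$ to be a subset of $W$ itself, not of $W^{\leq}$. This is why we pick representatives of the similarity classes of words of $W$ directly, rather than of subwords. Choosing them this way makes the condition comparison a literal equivalence. Finiteness of $W'$ follows because a nilpotent $S_c(W)$ bounds word lengths, and for each length there are finitely many similarity classes.
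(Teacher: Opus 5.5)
Your proposal is correct and follows essentially the same route as the paper's proof. You bound word lengths via nilpotency and pick a finite set $W'\subseteq W$ of similarity-class representatives, then invoke Proposition~\ref{nilpotent} to identify the varieties, apply Lemma~\ref{finite} to $W'$, and transfer the condition back to $W$ using invariance of the condition under similarity.
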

\begin{proof}
By assumption, $S_c(W)$ is $k$-nilpotent for some $k\geq 2$.
Then every word in $W$ has length less than $k$.
Consequently, up to similarity, there are only finitely many words in $W$.
Let $W'$ be a set consisting of exactly one representative from each similarity class.
Then $W'$ is a finite subset of $W$, and every word in $W$ is similar to a unique word in $W'$.
By Proposition~\ref{nilpotent}, $\mathsf{V}(S_c(W))=\mathsf{V}(S_c(W'))$,
and so $S_c(W)$ and $S_c(W')$ have the same finite basis property.
By Lemma~\ref{finite}, $S_c(W')$ is finitely based
if and only if every word in $W'$ is either a cube of a letter or a word of length at most $2$.
Therefore, $S_c(W)$ is finitely based
if and only if every word in $W$ is either a cube of a letter or a word of length at most $2$.
\end{proof}

\section{A sufficient condition for the nonfinitely based property}\label{sec:NFB}
In order to completely resolve the finite basis problem for flat semirings of the form $S_c(W)$,
Proposition~\ref{prop:scwnilpotentfb} tells us that it suffices to consider the case when $S_c(W)$ is not nilpotent.
In this section, we follow the hypergraph semiring approach developed by Jackson et al.~\cite{jrz} and Gao et al.~\cite{gjrz2},
and refine their sufficient conditions for the nonfinitely based property.
As applications, we show that certain non-nilpotent flat semirings $S_c(W)$ are nonfinitely based.
For the reader's convenience and completeness, we now recall some basic notions concerning hypergraph semirings.

Let \(k\geq 3\) be an integer.
A \emph{\(k\)-uniform hypergraph} \(\mathbb{H}\) is a pair \((V, E)\),
where \(E\) is a family of \(k\)-element subsets of a set \(V\).
Each element of \(V\) is a \emph{vertex} of \(\mathbb{H}\), and each
element of \(E\) is a \emph{hyperedge} of \(\mathbb{H}\).
Throughout, we assume that hypergraphs are finite and have no isolated
vertices, that is, every vertex is contained in at least one hyperedge.

Let $\mathbb{H}=(V, E)$ be a $k$-uniform hypergraph.
Then $\mathbb{H}$ is \emph{$2$-colourable} if there exists a mapping
$\varphi\colon V \to \{0, 1\}$ such that for every hyperedge $e \in E$,
the image $\varphi(e)$ contains both $0$ and $1$; that is, $|\varphi(e)|=2$.
A \emph{cycle} of length $n\geq 2$ in $\mathbb{H}$ is an alternating sequence $v_1, e_1, v_2, e_2, \ldots, v_n, e_n$ of
distinct vertices and hyperedges such that $v_1 \in e_1 \cap e_n$ and $v_{i+1} \in e_i \cap e_{i+1}$ for $1 \leq i < n$.
The \emph{girth} of \(\mathbb{H}\) is the length of its shortest cycle,
with the convention that the girth is \(\infty\) if \(\mathbb{H}\) has no
cycle. Accordingly, a \emph{hyperforest} is a hypergraph of girth
\(\infty\), that is, a hypergraph without cycles.

Let $\mathbb{H} = (V, E)$ be a $k$-uniform hypergraph  with girth at least $4$.
A \emph{hypergraph semiring} $S_\mathbb{H}$ defined by $\mathbb{H}$ is a flat semiring generated by a copy
$\{\mathbf{a}_v \mid v\in V\}$ of $V$ along with a fresh symbol~$0$, subject to the following rules:
\begin{enumerate}[label=(\arabic*), font=\normalfont]
\item $0$ is the multiplicative zero element;

\item $\mathbf{a}_u\mathbf{a}_v=\mathbf{a}_v\mathbf{a}_u$ for all $u, v\in V$;

\item $\mathbf{a}_u\mathbf{a}_v=0$ if $\{u, v\}$ is not a $2$-element subset of a hyperedge in $E$;

\item\label{item4} $\mathbf{a}_{u_1}\cdots\mathbf{a}_{u_k}=\mathbf{a}_{v_1}\cdots\mathbf{a}_{v_k}$
if $\{u_1, \ldots, u_k\}, \{v_1, \ldots, v_k\}\in E$;

\item $\mathbf{a}_{u_1}\cdots\mathbf{a}_{u_{k-1}}=\mathbf{a}_{v_1}\cdots\mathbf{a}_{v_{k-1}}$
if $\{u_1, \ldots, u_{k-1}, v\}, \{v_1, \ldots, v_{k-1}, v\} \in E$ for some $v \in V$.
\end{enumerate}
We let $\mathbf{a}$ denote the common value of the products in \ref{item4}.

For each integer \(k\geq 3\), we fix, for every \(n\geq 1\), a
\(k\)-uniform hypergraph \(\mathbb{H}_n=(V_n,E_n)\) that is not
\(2\)-colourable and whose girth is greater than \(k\binom{kn}{2}\).
Moreover, $\mathbb{H}_n$ has no isolated vertex; that is, every vertex of $\mathbb{H}_n$ lies in some hyperedge.
The existence of such hypergraphs is guaranteed by \cite[Theorem 2.7]{ham18}.

Let $\{x_v \mid v \in V_n\}$ be a set of letters in one-to-one correspondence with $V_n$.
We denote by $\mathbf{t}_{\mathbb{H}_n}$ the term
\[
\sum_{\{v_1,v_2,\ldots,v_k\} \in E_n} x_{v_1} x_{v_2} \cdots x_{v_k}.
\]
Since hyperedges are unordered,
each hyperedge gives rise to $k!$ distinct hyperedge products: for every hyperedge $\{v_1, \dots, v_k\} \in E_n$
and every permutation $\sigma$ of $\{1,\dots,k\}$, the word $x_{v_{\sigma(1)}} \cdots x_{v_{\sigma(k)}}$
appears in $\mathbf{t}_{\mathbb{H}_n}$.

We are now ready to state the main result of this section,
which provides a sufficient condition for an ai-semiring variety to be nonfinitely based.
\begin{theorem}\label{thnfb}
Let \(\mathcal{V}\) be an ai-semiring variety that contains the flat semiring \(S_c(a_1 a_2 \cdots a_k)\) for some \(k \geq 3\).
If \(\mathcal{V}\) satisfies the identities
\begin{equation}\label{eqthth2}
\mathbf{t}_{\mathbb{H}_n} \approx \mathbf{t}_{\mathbb{H}_n}^2
\end{equation}
for all \(n \geq 1\),
then \(\mathcal{V}\) is nonfinitely based.
\end{theorem}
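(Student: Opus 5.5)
We follow the standard compactness argument. Suppose \(\mathcal{V}\) had a finite basis \(\Sigma\), and let \(m\) bound the number of distinct letters occurring in any identity of \(\Sigma\). Fix \(n\) large relative to \(m\) and \(k\). The plan is to build a hypergraph semiring \(S_{\mathbb{H}_n}\), or a suitable ideal quotient of it, with three properties:
\begin{enumerate}[label=(\roman*), font=\normalfont]
\item it fails \(\mathbf{t}_{\mathbb{H}_n}\approx\mathbf{t}_{\mathbb{H}_n}^2\);
\item it satisfies every identity of \(\mathcal{V}\) in at most \(m\) variables;
\item consequently, it satisfies \(\Sigma\) and so lies in \(\mathcal{V}\).
\end{enumerate}
This contradicts the hypothesis that \(\mathcal{V}\) satisfies \(\mathbf{t}_{\mathbb{H}_n}\approx\mathbf{t}_{\mathbb{H}_n}^2\) for all \(n\).

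For (i), assign \(x_v\mapsto\mathbf{a}_v\). Every hyperedge product evaluates to \(\mathbf{a}\), so \(\mathbf{t}_{\mathbb{H}_n}\) takes the value \(\mathbf{a}\). In \(\mathbf{t}_{\mathbb{H}_n}^2\) each summand has the form \(\mathbf{a}\cdot\mathbf{a}\). The defining relations force the product of any two elements whose combined "support" is not contained in a hyperedge to be \(0\), so \(\mathbf{a}^2=0\) and \(\mathbf{a}\neq 0\). Here one must check from the presentation, using the girth hypothesis, that \(S_{\mathbb{H}_n}\) is a genuine flat semiring in which \(\mathbf{a}\ne0\). This is exactly where the girth at least \(4\) condition is used, and the paper's setup presumably takes it for granted, citing \cite{jrz,gjrz2}.

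For (ii), take an identity \(\bu\approx\bu+\bq\) of \(\mathcal{V}\) in at most \(m\) letters and an assignment \(\varphi\) into \(S_{\mathbb{H}_n}\) with \(\varphi(\bu)\neq0\). Since the semiring is flat, every word of \(\bu\) takes the same nonzero value \(s\). The elements used by \(\varphi\) involve at most \(m\) "pieces", each a product of \(\mathbf{a}_v\)'s of bounded size; unbounded powers are impossible because \(S_{\mathbb{H}_n}\) is nilpotent of bounded index. Hence the set of vertices involved spans at most roughly \(km\) hyperedges' worth of structure. Because the girth of \(\mathbb{H}_n\) exceeds \(k\binom{kn}{2}\), with \(n\) chosen so that \(kn\) dominates the relevant vertex count, this sub-hypergraph is a hyperforest. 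The subsemiring generated by the image of \(\varphi\), modulo \(0\), therefore embeds into (a Rees quotient of) a flat semiring built from a hyperforest. The key step is to show that every such hyperforest semiring lies in \(\mathsf{V}(S_c(a_1\cdots a_k))\). This should follow from the machinery of Section~2: a hyperforest semiring is a subdirect product of quotients of \(S_c(a_1\cdots a_k)\), and Corollary~\ref{wwi} type arguments apply. Alternatively one can peel off leaf hyperedges inductively, or invoke the corresponding lemma of \cite{jrz,gjrz2}. Since \(S_c(a_1\cdots a_k)\in\mathcal{V}\), the identity holds under \(\varphi\), giving \(\varphi(\bq)=s\).

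For (iii), property (ii) shows that \(S_{\mathbb{H}_n}\) satisfies \(\Sigma\), hence lies in \(\mathcal{V}\) while failing (i), which is the desired contradiction. The main obstacle is the local-to-global step in (ii). We must show that an \(m\)-generated piece of \(S_{\mathbb{H}_n}\) only sees an acyclic fragment of the hypergraph. We also need to handle the fact that the values \(\varphi(x)\) may be products of many generators, including partial products identified by relation (5). Here we expect to use the explicit normal form of elements of \(S_{\mathbb{H}_n}\), namely products over at most one hyperedge (or \(\mathbf{a}\) times nothing), and to count at most \(k\cdot\binom{km}{2}\)-length cycles to match the girth bound.
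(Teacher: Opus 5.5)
Your plan is the paper's proof: $S_{\mathbb{H}_n}$ fails $\mathbf{t}_{\mathbb{H}_n}\approx\mathbf{t}_{\mathbb{H}_n}^2$ under $x_v\mapsto\mathbf{a}_v$, while each $n$-generated subalgebra lies in $\mathcal{V}$. By the proof of \cite[Theorem 4.9]{jrz}, such a subalgebra embeds in $S_{\mathbb{G}^+}$ for a subhypergraph $\mathbb{G}^+$ on at most $k\binom{kn}{2}$ vertices; girth makes it a hyperforest, so $S_{\mathbb{G}^+}\in\mathsf{V}(S_c(a_1\cdots a_k))\subseteq\mathcal{V}$ by \cite[Lemma 4.2]{jrz}, and taking $n\geq m$ gives your contradiction. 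For the hyperforest step, use \cite[Lemma 4.2]{jrz} and not your suggested subdirect decomposition, which would fail: every nonzero element of $S_{\mathbb{H}}$ divides $\mathbf{a}$, so $S_{\mathbb{H}}$ is subdirectly irreducible with monolith $\{0,\mathbf{a}\}$; with two or more hyperedges it has more than $2^k$ elements, so it cannot be a quotient of $S_c(a_1\cdots a_k)$, and since it is not of the form $S_c(W)$, Corollary~\ref{wwi} does not apply either.
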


\begin{proof}
The proof proceeds as follows.
We show that for every \(n \geq 1\), the set of all \(n\)-variable identities of \(\mathcal{V}\) fails to form a basis for the equational theory of \(\mathcal{V}\).
To this end, it suffices to prove that for every \(n \geq 1\),
the hypergraph semiring \(S_{\mathbb{H}_n}\) does not belong to \(\mathcal{V}\),
whereas every \(n\)-generated subalgebra of \(S_{\mathbb{H}_n}\) does belong to \(\mathcal{V}\).

Let \(n \geq 1\).
By the proof of \cite[Theorem 4.9]{jrz},
every \(n\)-generated subalgebra \(T\) of \(S_{\mathbb{H}_n}\) is a subalgebra of a hypergraph semiring \(S_{\mathbb{G}^+}\),
where \(\mathbb{G}^+\) is a \(k\)-uniform subhypergraph of \(\mathbb{H}_n\) with at most \(k\binom{kn}{2}\) vertices.
It follows that \(\mathbb{G}^+\) contains no cycles, and hence is a hyperforest.
By \cite[Lemma 4.2]{jrz}, we have that \(S_{\mathbb{G}^+}\) lies in the variety \(\mathsf{V}(S_c(a_1 \cdots a_k))\).
Since \(S_c(a_1 \cdots a_k)\) lies in \(\mathcal{V}\),
we conclude that every \(n\)-generated subalgebra \(T\) of \(S_{\mathbb{H}_n}\) belongs to $\mathcal{V}$.

Now consider the assignment \(\varphi\colon \{x_v \mid v \in V_n\} \to S_{\mathbb{H}_n}\)
defined by \(\varphi(x_v) = \mathbf{a}_v\) for all \(v \in V_n\).
It is readily seen that \(\varphi(\mathbf{t}_{\mathbb{H}_n}) = \mathbf{a}\), and hence \(\varphi(\mathbf{t}_{\mathbb{H}_n}^2) = \mathbf{a}^2 = 0\).
Thus \(S_{\mathbb{H}_n}\) does not satisfy identity~\eqref{eqthth2}.
Consequently, \(S_{\mathbb{H}_n}\) is not a member of \(\mathcal{V}\).
Therefore, \(\mathcal{V}\) is nonfinitely based, as required.
\end{proof}

Let $\mathcal{V}_1$ and $\mathcal{V}_2$ be ai-semiring varieties such that
$\mathcal{V}_1$ is a subvariety of $\mathcal{V}_2$. Then the interval
$[\mathcal{V}_1, \mathcal{V}_2]$ denotes the set of all subvarieties of $\mathcal{V}_2$
that contain $\mathcal{V}_1$.

\begin{proposition}\label{prop:scwnfbknil}
Let \(W\) be a nonempty set of words in \(X^+_c\). Suppose that
\(S_c(W)\) is not nilpotent but is \(k\)-nil for some \(k\geq 3\).
Then every variety in the interval
\[
[\mathsf{V}(S_c(a_1\cdots a_k)),\mathsf{V}(S_c(W))]
\]
is nonfinitely based. In particular, \(S_c(W)\) itself is
nonfinitely based.
\end{proposition}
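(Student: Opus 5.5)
The plan is to apply Theorem~\ref{thnfb} to every variety \(\mathcal{V}\) in the interval. Two things are needed.

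\emph{The interval is nonempty.} We need \(S_c(a_1\cdots a_k)\in\mathsf{V}(S_c(W))\). Since \(S_c(W)\) is not nilpotent, the converse part of the proof of Proposition~\ref{nilpotentscw} shows that \(S_c(a_1\cdots a_n)\in \mathsf{V}(S_c(W))\) for every \(n\geq 2\), in particular for \(n=k\).

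\emph{The identities hold.} Each \(\mathcal{V}\) in the interval contains \(S_c(a_1\cdots a_k)\). Identities pass to subvarieties, so it suffices to show that \(S_c(W)\) satisfies \(\mathbf{t}_{\mathbb{H}_n}\approx\mathbf{t}_{\mathbb{H}_n}^2\) for all \(n\). Theorem~\ref{thnfb} then gives nonfinite basability of every such \(\mathcal{V}\), including \(\mathsf{V}(S_c(W))\) itself.

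To verify the identity, fix an assignment \(\varphi\) into \(S_c(W)\). I will show \(\varphi(\mathbf{t}_{\mathbb{H}_n})=0\) always, and then that \(\varphi(\mathbf{t}_{\mathbb{H}_n}^2)=0\).

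\emph{First step: \(\varphi(\mathbf{t}_{\mathbb{H}_n})=0\).} Suppose instead that \(\varphi(\mathbf{t}_{\mathbb{H}_n})=\mathbf{c}\neq 0\). In a flat semiring this forces every summand to equal \(\mathbf{c}\). Then for every hyperedge \(\{v_1,\dots,v_k\}\) we have \(\mathbf{w}_{v_1}\cdots\mathbf{w}_{v_k}=\mathbf{c}\), where \(\mathbf{w}_v=\varphi(x_v)\in W^{\leq}\). Here every \(\mathbf{w}_v\) is nonzero, because there are no isolated vertices.

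Identify commutative words with exponent vectors in \(\mathbb{N}^X\) and fix a total order on these vectors compatible with addition, such as the lexicographic order. Colour a vertex \(v\) by \(1\) if \(k\mathbf{w}_v<\mathbf{c}\) and by \(0\) if \(k\mathbf{w}_v\geq \mathbf{c}\), with the operations taken additively on exponent vectors. Since \(\mathbb{H}_n\) is not \(2\)-colourable, some hyperedge \(e\) is monochromatic.

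If \(e\) has colour \(1\), summing over \(v\in e\) gives \(k\mathbf{c}<k\mathbf{c}\), which is absurd.

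If \(e\) has colour \(0\), summing gives \(k\mathbf{c}=\sum_{v\in e}k\mathbf{w}_v\) with each term \(\geq \mathbf{c}\). Strict monotonicity of the order forces \(k\mathbf{w}_v=\mathbf{c}\) for all \(v\in e\). Hence \(\mathbf{w}_v^k=\mathbf{c}\in W^{\leq}\) is nonzero, contradicting that \(S_c(W)\) is \(k\)-nil.

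So in fact \(\varphi(\mathbf{t}_{\mathbb{H}_n})=0\) for every \(\varphi\).

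\emph{Second step: \(\varphi(\mathbf{t}_{\mathbb{H}_n}^2)=0\).} There are two ways \(\varphi(\mathbf{t}_{\mathbb{H}_n})\) can be \(0\).
\begin{enumerate}[label=(\roman*), font=\normalfont]
\item Some summand \(\mathbf{p}\) has \(\varphi(\mathbf{p})=0\). Then the summand \(\mathbf{p}\mathbf{p}\) of \(\mathbf{t}_{\mathbb{H}_n}^2\) evaluates to \(0\).
\item Two summands \(\mathbf{p}_1,\mathbf{p}_2\) have distinct nonzero values. Then, for any summand \(\mathbf{q}\), the summands \(\mathbf{p}_1\mathbf{q}\) and \(\mathbf{p}_2\mathbf{q}\) of \(\mathbf{t}_{\mathbb{H}_n}^2\) have values that are distinct or zero, by \(0\)-cancellativity.
\end{enumerate}
In either case the flat sum \(\varphi(\mathbf{t}_{\mathbb{H}_n}^2)\) is \(0\). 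Hence \(S_c(W)\models\mathbf{t}_{\mathbb{H}_n}\approx\mathbf{t}_{\mathbb{H}_n}^2\).

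The main obstacle is the first step: turning non-\(2\)-colourability plus \(k\)-nil into the vanishing of \(\mathbf{t}_{\mathbb{H}_n}\). The colouring by comparing \(k\mathbf{w}_v\) with \(\mathbf{c}\) in an additive monomial order is the device that makes it work. Note that non-nilpotency is used only to guarantee that the interval is nonempty.
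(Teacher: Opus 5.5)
Your proof is correct. Its overall structure matches the paper's: you get nonemptiness of the interval from Proposition~\ref{nilpotentscw}, reduce to showing that $\mathbf{t}_{\mathbb{H}_n}$ always evaluates to $0$ in $S_c(W)$, and derive a forbidden $2$-colouring from a hypothetical nonzero value. The difference lies only in the colouring device.

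\textbf{The paper's colouring.} It picks a single letter $x_1$ occurring in the common value $\mathbf{c}$ of the hyperedge products. The $k$-nil hypothesis forces $\occ(x_1,\mathbf{c})<k$, since otherwise $x_1^k\in W^{\leq}$ would be a nonzero $k$th power. Hence every hyperedge has a vertex whose image contains $x_1$ and one whose image does not, and vertices are coloured by whether $x_1$ occurs in their image.

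\textbf{Your colouring.} You compare $k\mathbf{w}_v$ with $\mathbf{c}$ in a translation-invariant total order on exponent vectors. A monochromatic hyperedge then yields either $k\mathbf{c}<k\mathbf{c}$ or $\mathbf{w}_v^k=\mathbf{c}\neq 0$.

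\textbf{Comparison.} The paper's device is shorter and exploits the letter structure of $X_c^+$, but it first has to translate $k$-nil into a bound on letter exponents. Yours uses the $k$-nil hypothesis verbatim. It only needs that nonzero products are computed in a cancellative commutative monoid with a compatible total order, so the same argument would apply to flat semirings built from other such monoids.

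\textbf{A minor redundancy.} Your second step is unnecessary. An assignment is a homomorphism, so $\varphi(\mathbf{t}_{\mathbb{H}_n}^2)=\varphi(\mathbf{t}_{\mathbb{H}_n})^2=0$ as soon as $\varphi(\mathbf{t}_{\mathbb{H}_n})=0$. Your case analysis via $0$-cancellativity is correct but redundant.
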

\begin{proof}
Since $S_c(W)$ is not nilpotent,
it follows from Proposition~\ref{nilpotentscw} that $\mathsf{V}(S_c(W))$ contains $S_c(a_1\cdots a_k)$,
and so  $[\mathsf{V}(S_c(a_1\cdots a_k)),\mathsf{V}(S_c(W))]$ is well-defined.

Now let $\mathcal{V}$ be an arbitrary variety in this interval.
To show that $\mathcal{V}$ is nonfinitely based, by Theorem~\ref{thnfb} it suffices to show that
$S_c(W)$ satisfies the identities~\eqref{eqthth2} for each $n\geq 1$.

Let $\psi \colon \{x_v \mid v \in V_n\} \to S_c(W)$ be an arbitrary assignment.
If $\psi(\bt_{\mathbb{H}_n}) = 0$, then
\[
\psi(\bt_{\mathbb{H}_n})=0=\psi(\bt_{\mathbb{H}_n}^2).
\]
Now suppose that $\psi(\bt_{\mathbb{H}_n}) \neq 0$.
Then there exists a word $\bw \in W^\leq$ such that
\[
\psi(x_{v_1}) \psi(x_{v_2}) \cdots \psi(x_{v_k}) = \bw
\]
for each hyperedge $\{v_1, \ldots, v_k\} \in E_n$.
We may write $x_1^{t_1} \cdots x_s^{t_s}$ for $\bw$,
where $x_1, \ldots, x_s$ are distinct variables, and
$t_1, \ldots, t_s$ are positive integers.
By assumption, $S_c(W)$ is $k$-nil; hence $t_1 < k$.
Consequently, for each hyperedge $\{v_1, \ldots, v_k\} \in E_n$,
there exist indices $1 \leq i, j \leq k$ such that $x_1$ occurs in $\psi(x_{v_i})$,
but does not occur in $\psi(x_{v_j})$.
Define a mapping $\chi \colon V(\mathbb H_n) \to \{0, 1\}$ by
\[
\chi(v) =
\begin{cases}
0, & \text{if } x_1 \text{ occurs in } \psi(x_v), \\
1, & \text{otherwise}.
\end{cases}
\]
Then $\chi$ is a $2$-colouring of $\mathbb H_n$, contradicting the fact that $\mathbb H_n$ is not $2$-colourable.
So $\psi(\bt_{\mathbb{H}_n}) \neq 0$ is impossible.

Therefore, $S_c(W)$ satisfies the identities \eqref{eqthth2}, and the desired result follows.
\end{proof}

A \emph{primitive word} in \(X^*\) is a word that is not a power of any other word.
By Propositions~1.3.1 and~1.3.2 in~\cite{loth}, we have the following lemma.

\begin{lemma}\label{primitive}
Every nonempty word in \(X^*\) can be expressed uniquely as a power of a primitive word.
Moreover, a set of words is pairwise commuting if and only if its elements can be expressed as powers of one and the same primitive word.
\end{lemma}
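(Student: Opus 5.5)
The statement is the classical pair of facts from combinatorics on words (Lothaire, Propositions 1.3.1 and 1.3.2). I would prove it from two basic tools, existence of a primitive root and the commutation lemma, and then derive uniqueness and the characterization of commuting sets from these.

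\emph{Existence.} I would argue by induction on \(\ell(\bw)\). A letter is primitive. If a nonempty word \(\bw\) is not primitive, then \(\bw=\bu^k\) for some word \(\bu\) and some \(k\geq 2\), so \(\ell(\bu)<\ell(\bw)\). By induction \(\bu=\bq^j\) with \(\bq\) primitive, hence \(\bw=\bq^{jk}\).

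\emph{Commutation lemma.} Next I would prove that \(\bu\bv=\bv\bu\) implies that \(\bu\) and \(\bv\) are powers of a common word. I would use induction on \(\ell(\bu)+\ell(\bv)\). The case where one of the words is empty is trivial. Otherwise assume \(\ell(\bu)\leq\ell(\bv)\). Comparing prefixes of \(\bu\bv=\bv\bu\) gives \(\bv=\bu\bv'\). Substituting yields \(\bu\bu\bv'=\bu\bv'\bu\), and cancelling \(\bu\) on the left gives \(\bu\bv'=\bv'\bu\). The induction hypothesis now gives \(\bu,\bv'\in\{\bt\}^*\) for some word \(\bt\), and hence \(\bv=\bu\bv'\in\{\bt\}^*\) as well.

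\emph{Uniqueness.} This is the step I expect to be the main obstacle, because the commutation lemma alone is circular here. Suppose \(\bw=\bp^m=\bq^n\) with \(\bp,\bq\) primitive and, say, \(\ell(\bp)\geq\ell(\bq)\).
\begin{itemize}
\item If \(m=1\), then \(\bp=\bq^n\), and primitivity of \(\bp\) forces \(n=1\), so \(\bp=\bq\).
\item If \(m\geq 2\), then \(\ell(\bw)\geq 2\ell(\bp)\geq\ell(\bp)+\ell(\bq)\), and \(\bw\) has both periods \(\ell(\bp)\) and \(\ell(\bq)\).
\end{itemize}
In the second case I would invoke the Fine--Wilf theorem (proved by the standard Euclidean-algorithm argument on periods): \(\bw\) then has period \(d=\gcd(\ell(\bp),\ell(\bq))\). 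Since \(d\) divides both \(\ell(\bp)\) and \(\ell(\bq)\), and these are themselves periods, both \(\bp\) and \(\bq\) are powers of the prefix of \(\bw\) of length \(d\). Primitivity then forces \(\ell(\bp)=d=\ell(\bq)\), so \(\bp=\bq\).

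\emph{Commuting sets.} Words that are powers of one primitive word clearly commute pairwise. Conversely, suppose a set of words commutes pairwise. I would discard the empty word, which is \(\bp^0\) for any \(\bp\). Fix a nonempty member \(\bu_0\) and let \(\bp\) be its primitive root. For any other nonempty member \(\bu\), the commutation lemma gives \(\bu_0,\bu\in\{\bt\}^*\) for some word \(\bt\). Writing \(\bt=\br^j\) with \(\br\) primitive, by existence, we get \(\bu_0\in\{\br\}^+\). Uniqueness then gives \(\br=\bp\), so \(\bu\) is a power of \(\bp\).
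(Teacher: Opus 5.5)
Your proof is correct. It is necessarily a different route from the paper's, because the paper gives no argument of its own: it simply quotes Propositions 1.3.1 and 1.3.2 of Lothaire. There, both facts follow from the general result that two words satisfying a nontrivial relation (such as $xy=yx$, or $x^m=y^n$ with $x\neq y$) are powers of a common word, a corollary of the defect theorem. That treats the commutation equation and the power equation uniformly.

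Your version is self-contained and elementary, and each step checks out:
\begin{itemize}
\item existence by induction on length;
\item the commutation lemma by induction on $\ell(\bu)+\ell(\bv)$;
\item uniqueness via the periodicity lemma, where only the weak form with $\ell(\bw)\geq \ell(\mathbf{p})+\ell(\bq)$ is needed, and its Euclidean-step proof is short;
\item the assembly for commuting sets.
\end{itemize}

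One simplification is worth knowing: the uniqueness step does not actually need Fine--Wilf, so it is less of an obstacle than you feared. If $\bw=\mathbf{p}^m=\bq^n$, then
\[
\bw^2=\mathbf{p}\,\bw\,\mathbf{p}^{m-1}=\bq\,\bw\,\bq^{n-1},
\]
and $\bw$ begins with both $\bq$ and $\mathbf{p}$. Hence $\mathbf{p}\bq$ and $\bq\mathbf{p}$ are prefixes of $\bw^2$ of the same length, so $\mathbf{p}\bq=\bq\mathbf{p}$. Your commutation lemma then gives $\mathbf{p},\bq\in\{\bt\}^*$, and primitivity forces $\mathbf{p}=\bt=\bq$. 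With this observation the whole lemma rests on your single commutation lemma, which makes the argument even more self-contained.
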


\begin{proposition}\label{mcwmw}
Let \(S\) be a flat semiring of the form \(M_c(W)\) or \(M(W)\),
where \(W\) contains a nonempty word.
If \(S\) satisfies the identity \(x^{k}\approx x^{k+1}\) for some \(k\geq 3\),
then every variety in the interval
\([\mathsf{V}(S_c(a_1\cdots a_k)),\mathsf{V}(S)]\) is nonfinitely based.
In particular, \(S\) itself is nonfinitely based.
\end{proposition}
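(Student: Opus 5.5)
The plan follows the proof of Proposition~\ref{prop:scwnfbknil} and has two ingredients. First, we show that \(\mathsf{V}(S)\) contains \(S_c(a_1\cdots a_k)\), so that the interval is well defined. Second, we show that \(S\) satisfies the identities~\eqref{eqthth2} for every \(n\geq 1\). Theorem~\ref{thnfb} then applies to every variety \(\mathcal{V}\) in the interval, since each such \(\mathcal{V}\) contains \(S_c(a_1\cdots a_k)\) and is contained in \(\mathsf{V}(S)\), and so satisfies~\eqref{eqthth2}.

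For the first step, pick a nonempty word \(\bw\in W\). Since \(M(W)\) and \(M_c(W)\) contain all subwords, they contain a letter \(a\) from \(\bw\), and hence the empty word \(1\) and \(a\). The subsemiring generated by \(1\) and \(a\) consists of \(1,a,\dots,a^m,0\), where \(m\) is the largest power of \(a\) lying in \(W^{\leq}\). This is where the identity \(x^k\approx x^{k+1}\) is used. Because \(a\neq 0\) and \(1\neq 0\), the identity applied to \(x=1\) gives \(1=1^{k+1}\), which holds automatically, so that case is useless. Instead, apply it to \(x=1+a\). Here \(1+a=0\), since the algebra is flat and the two elements are distinct, so this also yields nothing.

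A better route is to work with the nonempty words of \(S\) directly. Let \(S^+\) be the ideal quotient removing \(1\), that is, the subsemiring on the nonempty subwords together with \(0\). In the commutative case this is exactly \(S_c(W\setminus\{1\})\). In the noncommutative case, take all letters occurring in \(W\) and form the subsemiring generated by one letter from each commutation class. More simply, we use that \(S\) is not nilpotent: \(1^j=1\neq 0\) for all \(j\), so no power of \(1\) vanishes. Then \(1\cdot 1\cdots 1\cdot a=a\) shows that \(S\) is not \(k\)-nilpotent for any \(k\).

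To obtain \(S_c(a_1\cdots a_k)\), take the direct product \(S^{k}\) and the elements \(\mathbf{a}_i\) having \(a\) in coordinate \(i\) and \(1\) elsewhere. These pairwise commute, each has square \(0\) exactly when \(a^2\notin W^{\le}\); in general \(\mathbf{a}_i^2\) is not \(0\). So instead I would restrict to the subsemiring generated by the \(\mathbf{a}_i\) and quotient by the ideal of elements not dividing \(\mathbf{a}_1\cdots\mathbf{a}_k=(a,\dots,a)\). Here \(\mathbf{a}_i^2\) has \(a^2\) in coordinate \(i\) and therefore does not divide \((a,\dots,a)\). The products \(\prod_{i\in I}\mathbf{a}_i\) for distinct \(I\) are distinct, and they are exactly the divisors. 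Hence the quotient is isomorphic to \(S_c(a_1\cdots a_k)\), and this quotient construction replaces the failed attempts above. It uses only \(a\in W^{\le}\), so it lies in \(\mathsf{V}(S)\).

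For the second step, let \(\psi\) be an assignment into \(S\) with \(\psi(\bt_{\mathbb{H}_n})=\bw\neq 0\), so that every hyperedge product equals the same element \(\bw\). If \(\bw\) is the empty word, then every \(\psi(x_v)\) equals \(1\), because lengths add. In that case \(\psi(\bt^2)=1=\psi(\bt)\) and the identity holds. If \(\bw\neq 1\), we need a letter-based \(2\)-colouring to reach a contradiction, and this is the main obstacle.

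In the commutative case, pick a letter \(x\) occurring in \(\bw\). If every vertex of some hyperedge had \(x\) in its image, then \(x^k\) would divide \(\bw\), and so \(x^k\in W^{\le}\). The identity \(x^k\approx x^{k+1}\) with \(x\mapsto x\) forces \(x^{k+1}=x^k\neq 0\), which is impossible by length. The vertices that map to \(1\) are also fine, since they simply lack \(x\). So colour \(v\) by whether \(x\) occurs in \(\psi(x_v)\). Every hyperedge then contains a vertex with \(x\), because \(x\) occurs in \(\bw\), and a vertex without \(x\); this is a \(2\)-colouring, a contradiction.

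The noncommutative case works the same way via occurrence of a letter in \(\psi(x_v)\), applying the identity to \(x\mapsto x\). If all \(k\) factors of a hyperedge product contain \(x\), the product contains at least \(k\) occurrences of \(x\). That alone does not give a power \(x^k\) as a subword, so I would instead use the identity on suitable elements to bound occurrences. This is where Lemma~\ref{primitive} should enter. The relevant products are the \(k!\) orderings of each hyperedge, all of which appear in \(\bt_{\mathbb{H}_n}\) and all of which equal \(\bw\). Consequently the images of two vertices in a common hyperedge commute, which one would verify by comparing orderings and using cancellativity. By Lemma~\ref{primitive}, all the images within a hyperedge are then powers of a single primitive word \(\bq\), and connectivity along hyperedges extends this to the whole component. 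Hence \(\bw=\bq^k\cdot(\text{more})\) when all factors are nonempty. If every vertex of a hyperedge has nonempty image, then \(\bq^k\in W^{\le}\), so \(\bq^k\neq 0\), while the identity gives \(\bq^{k+1}=\bq^k\), a length contradiction. Otherwise, colouring vertices by whether their image is \(1\) gives a \(2\)-colouring of \(\mathbb{H}_n\), again a contradiction.
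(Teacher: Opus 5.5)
Your proposal is correct and follows the paper's proof. You place $S_c(a_1\cdots a_k)$ in $\mathsf{V}(S)$, then verify $\bt_{\mathbb{H}_n}\approx\bt_{\mathbb{H}_n}^2$ in $S$ by the same $2$-colouring contradictions (letter occurrence in the commutative case; the $k!$ orderings, cancellation, Lemma~\ref{primitive} and empty versus nonempty images in the noncommutative case), and invoke Theorem~\ref{thnfb}. The only deviation is the first step, where you build $S_c(a_1\cdots a_k)$ directly as the divisor quotient of the subsemiring of $S^k$ generated by the tuples with $a$ in one coordinate and $1$ elsewhere, in the style of Lemma~\ref{a1tam}, instead of passing through $T/J\cong M(a)$ and citing \cite[Proposition~2.6]{jrz}; you should delete the exploratory dead ends before that construction, including the remark that $x^k\approx x^{k+1}$ is used there (it is not).
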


\begin{proof}
By \cite[Proposition 2.6]{jrz}, \(S_c(a_1\cdots a_k)\in \mathsf{V}(M(a))\).
Now take any nonempty word \(\mathbf{w}\in W\), and consider the
subsemiring \(T\) of $S$ generated by \(\{1,\mathbf{w}\}\).
Let \(J=\{\mathbf{w}^{i}\mid i\geq 2\}\cup\{0\}\).
It is easy to verify that \(J\) is an ideal of \(T\) and that
\(T/J\cong M(a)\).
Hence \(M(a)\in \mathsf{V}(S)\).
This implies that
\(S_c(a_1\cdots a_k)\in \mathsf{V}(S)\) for each \(k\geq 3\).
Thus the interval
\([\mathsf{V}(S_c(a_1\cdots a_k)),\mathsf{V}(S)]\) is well-defined.

Now let \(\mathcal{V}\) be an arbitrary variety in this interval.
To show that \(\mathcal{V}\) is nonfinitely based, by Theorem~\ref{thnfb} it suffices to show that
\(S\) satisfies the identities~\eqref{eqthth2} for every \(n\geq 1\).
Let \(\psi \colon \{x_v \mid v \in V_n\} \to S\) be an arbitrary assignment.
If \(\psi(\mathbf{t}_{\mathbb{H}_n}) \in \{0,1\}\), then
\[
\psi(\mathbf{t}_{\mathbb{H}_n}) = \psi(\mathbf{t}_{\mathbb{H}_n}^2).
\]
Now suppose that \(\psi(\mathbf{t}_{\mathbb{H}_n}) \neq 0,1\).
Then there exists a word \(\mathbf{w} \in W^{\leq}\setminus \{1\}\) such that
\[
\psi(x_{v_1}) \psi(x_{v_2}) \cdots \psi(x_{v_k}) = \mathbf{w}
\]
for each hyperedge \(\{v_1, \ldots, v_k\} \in E_n\).
Consider the following two cases.

\textbf{Case 1.} \(S=M_c(W)\).
Write \(\mathbf{w}=x_1^{t_1}\cdots x_s^{t_s}\).
Since \(S\) satisfies \(x^k\approx x^{k+1}\),
it follows that \(1\leq t_1< k\).
Consequently, for each hyperedge \(\{v_1, \ldots, v_k\} \in E_n\),
there exist indices \(1 \leq i, j \leq k\) such that \(x_1\) occurs in \(\psi(x_{v_i})\)
but does not occur in \(\psi(x_{v_j})\).
Define a mapping \(\chi \colon V(\mathbb{H}_n) \to \{0, 1\}\) by
\[
\chi(v) =
\begin{cases}
0, & \text{if } x_1 \text{ occurs in } \psi(x_v), \\
1, & \text{otherwise}.
\end{cases}
\]
Then \(\chi\) is a \(2\)-colouring of \(\mathbb{H}_n\), contradicting the fact that \(\mathbb{H}_n\) is not \(2\)-colourable.

\textbf{Case 2.} \(S=M(W)\).
Let \(\{v_1,\ldots,v_k\}\) be an arbitrary hyperedge.
Since
\[
\psi(x_{v_i})\psi(x_{v_j}) \prod_{l\neq i,j}\psi(x_{v_l})
=
\psi(x_{v_j})\psi(x_{v_i}) \prod_{l\neq i,j}\psi(x_{v_l})
= \mathbf{w}\neq 0,
\]
it follows that
\[
\psi(x_{v_i})\psi(x_{v_j}) = \psi(x_{v_j})\psi(x_{v_i}).
\]
Thus \(\psi(x_{v_1}),\ldots,\psi(x_{v_k})\) are pairwise commuting words in \(X^*\).
By Lemma~\ref{primitive}, there exists a primitive word \(\mathbf{p}\)
such that \(\psi(x_{v_i}) = \mathbf{p}^{t_i}\) for some \(t_i\geq 0\).
Since \(S\) satisfies the identity \(x^k\approx x^{k+1}\) and since \(\mathbf{w}\neq 1\),
it follows that some \(t_i>0\) and some \(t_j=0\); that is, some \(\psi(x_v)\) is an empty word, while some is not.
Since \(\{v_1,\ldots,v_k\}\) is the arbitrary hyperedge, we may define
a mapping \(\chi \colon V(\mathbb{H}_n) \to \{0, 1\}\) by
\[
\chi(v) =
\begin{cases}
0, & \text{if } \psi(x_v) \text{ is an empty word}, \\
1, & \text{otherwise}.
\end{cases}
\]
Then \(\chi\) is a \(2\)-colouring of \(\mathbb{H}_n\), contradicting the fact that \(\mathbb{H}_n\) is not \(2\)-colourable.

In conclusion, \(S\) satisfies the identities~\eqref{eqthth2}, and hence the desired conclusion holds.
\end{proof}

\section{The flat semiring $\flat(\mathbb{N})$}\label{sec:flatn}
Let \(\mathbb{N}\) denote the set of all positive integers and
\(\mathbb{P}\) the set of all prime numbers.
It is easy to see that \(S_c(\{a\}^+)\) is isomorphic to the flat
extension \(\flat(\mathbb{N})\) of the additive semigroup
\(\mathbb{N}\).
To completely solve the finite basis problem for the flat semirings
\(S_c(W)\), by Propositions~\ref{prop:scwnilpotentfb} and~\ref{prop:scwnfbknil}, it remains to consider the case where
\(S_c(W)\) is not \(k\)-nil for any \(k \geq 3\).
By Corollary~\ref{coro26061420}, this is precisely the case where
\[
\mathsf{V}(S_c(W)) = \mathsf{V}(S_c(\{a\}^+)) = \mathsf{V}(\flat(\mathbb{N})).
\]
Consequently, the finite basis problem for flat semirings of the form
\(S_c(W)\) reduces to that for \(\flat(\mathbb{N})\).

The strategy for proving that \(\flat(\mathbb{N})\) is nonfinitely
based is as follows. For each prime \(p\), we define an associated
identity \(\sigma_p\) and algebra \(A_p\). First, we show that every
identity \(\sigma_p\) holds in \(\flat(\mathbb{N})\) but fails in
\(A_p\). Second, we prove that for any finite set \(\Sigma\) of
identities valid in \(\flat(\mathbb{N})\), there exists a prime \(p\)
such that \(A_p\) satisfies all identities in \(\Sigma\). This
establishes that no finite set \(\Sigma\) of identities valid in
\(\flat(\mathbb{N})\) is an equational basis for
\(\flat(\mathbb{N})\), and hence that \(\flat(\mathbb{N})\) is indeed
nonfinitely based. Using this idea, we obtain a more general result: all varieties
lying between two designated varieties are nonfinitely based.

We now turn to the construction. For each \(p \in \mathbb{P}\), let
\(\sigma_p\) denote the identity
\[
\sigma_p\colon (x+y)^p\approx x^p+y^p.
\]

The law \(\sigma_p\) is a particularly simple and familiar identity.
In number-theoretic contexts, where \(+\) is ordinary addition on
\(\mathbb{N}\), it is often called the \emph{poor student's Binomial Theorem}
or the \emph{Freshman's Binomial Theorem}, because it is exactly the
kind of erroneous simplification sometimes made by a weak student.
Nonetheless, it is a genuinely useful property: it does hold in the
ring \(\mathbb{Z}_p\) of integers modulo \(p\), since in the true binomial
expansion
\[
(x+y)^p=\sum_{k=0}^{p}\binom{p}{k}x^k y^{p-k},
\]
all nontrivial binomial coefficients \(\binom{p}{k}\), for
\(1\leq k\leq p-1\), are divisible by \(p\), and hence vanish modulo
\(p\). Thus the apparent ``mistake'' becomes a valid identity in that
setting. In fact, for any primes \(p\) and \(q\), the ring
\(\mathbb{Z}_q\) satisfies the identity \(\sigma_p\) if and only if
\(p\equiv 1 \pmod{q-1}\).

We next observe that the identity \(\sigma_p\) has a natural interpretation
in the max-plus semiring \((\mathbb Z,\max,+)\).
For any \(x,y\in\mathbb Z\), and for each prime \(p\), the identity
\(\sigma_p\) translates in the max-plus semiring \((\mathbb Z,\max,+)\)
to
\[
p\cdot \max\{x,y\} = \max\{px,py\}.
\]
This is plainly true: if \(x\geq y\), then \(px\geq py\), and hence both
sides equal \(px\); the case \(y\geq x\) is symmetric. Thus we obtain the
following conclusion.

\begin{lemma}\label{lem:max+r}
The max-plus algebra \((\mathbb Z,\max,+)\) satisfies every identity
\(\sigma_p\).
\end{lemma}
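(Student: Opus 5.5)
The plan is to verify $\sigma_p$ directly in $(\mathbb Z,\max,+)$. Here the semiring addition is $\max$ and the semiring multiplication is ordinary $+$. Hence for any element $z$, the power $z^p$ is interpreted as $pz$, the $p$-fold ordinary sum. Under an arbitrary assignment $x\mapsto a$, $y\mapsto b$ with $a,b\in\mathbb Z$, the left side of $\sigma_p$ evaluates to $p\cdot\max\{a,b\}$. The right side $x^p+y^p$ evaluates to $\max\{pa,pb\}$.

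The only step is to show $p\cdot\max\{a,b\}=\max\{pa,pb\}$ for all integers $a,b$. Assume without loss of generality that $a\geq b$; the case $b\geq a$ is symmetric. Since $p$ is a positive integer, multiplication by $p$ is order-preserving on $\mathbb Z$, so $pa\geq pb$. Both sides therefore equal $pa$. The argument only uses $p\geq 1$, so it works for every positive exponent, and in particular for every prime.

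Since the assignment was arbitrary, $(\mathbb Z,\max,+)$ satisfies $\sigma_p$ for each $p\in\mathbb P$. There is no real obstacle. The one point needing care is to state the max-plus interpretation of the operations explicitly: $+$ becomes $\max$ and multiplication becomes ordinary addition. One must also note that $(\mathbb Z,\max,+)$ is genuinely an ai-semiring, with $\max$ commutative and idempotent and $+$ distributing over $\max$, so that satisfaction of an ai-semiring identity makes sense.
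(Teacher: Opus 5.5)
Your proof is correct and follows the paper's argument: both interpret $\sigma_p$ in $(\mathbb Z,\max,+)$ as $p\cdot\max\{x,y\}=\max\{px,py\}$. Both then verify this by splitting into the cases $x\geq y$ and $y\geq x$, using that multiplication by the positive integer $p$ preserves order.
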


Of course, the main object of study in this section is the flat
semiring. We can prove the following lemma.

\begin{lemma}\label{lem:flatsigmap}
A flat semiring \(S\) satisfies the identity \(\sigma_p\) if and only
if \(a^p+b^p=0\) whenever \(a,b\in S\) with
\(a\neq b\).
\end{lemma}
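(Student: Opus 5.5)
We prove both directions by evaluating the two sides of \(\sigma_p\) directly in the flat semiring \(S\). The basic observation is this: in a flat semiring, a sum of finitely many elements equals their common value if all summands coincide, and equals \(0\) otherwise. Here \(0\) is the additive top and multiplicative zero; a sum containing \(0\) is \(0\), and a sum of two distinct elements is \(0\). We use this repeatedly below.

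First assume \(S\models\sigma_p\), and let \(a\neq b\) in \(S\). Then \(a+b=0\), so \((a+b)^p=0^p=0\). By \(\sigma_p\) we get \(a^p+b^p=(a+b)^p=0\), which is the required condition.

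Conversely, assume \(a^p+b^p=0\) for all distinct \(a,b\in S\), and take an arbitrary assignment \(x\mapsto a\), \(y\mapsto b\).
\begin{itemize}
\item If \(a=b\), then both sides of \(\sigma_p\) equal \(a^p\), by additive idempotence.
\item If \(a\neq b\), then \((a+b)^p=0^p=0\), while \(a^p+b^p=0\) by hypothesis. Hence both sides equal \(0\).
\end{itemize}
Thus \(S\models\sigma_p\).

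There is no real obstacle. The only point to handle carefully is to compute \((a+b)^p\) via flatness (\(a+b=0\)), rather than expanding it by distributivity. Since \(p\geq 1\), we have \(0^p=0\).
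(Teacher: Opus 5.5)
Your proposal is correct and takes essentially the same approach as the paper. In the forward direction you use \(a+b=0\) for \(a\neq b\) to get \(a^p+b^p=(a+b)^p=0\), and in the converse you split into the cases \(a=b\) (additive idempotence) and \(a\neq b\) (both sides equal \(0\)), exactly as the paper does.
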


\begin{proof}
Suppose first that \(S\) satisfies \(\sigma_p\), and let \(a,b\in S\)
with \(a\neq b\). Then \(a+b=0\), so
\[
a^p+b^p=(a+b)^p=0^p=0,
\]
as required.

Conversely, suppose that for all \(a,b\in S\), \(a\neq b\) implies
\(a^p+b^p=0\). We must show that \((a+b)^p=a^p+b^p\) for all
\(a,b\in S\). If \(a=b\), then
\[
(a+b)^p=a^p=a^p+b^p,
\]
using idempotence of addition. If \(a\neq b\), then \(a+b=0\), so
\[
(a+b)^p=0^p=0=a^p+b^p
\]
by the assumed condition. Thus \(\sigma_p\) holds in \(S\).
\end{proof}

\begin{corollary}\label{coro:4ssigmap}
Every flat semiring of one of the forms \(S(W)\), \(S_c(W)\),
\(M(W)\) and \(M_c(W)\) satisfies the identity \(\sigma_p\) for each
prime \(p\).
\end{corollary}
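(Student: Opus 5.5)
By Lemma~\ref{lem:flatsigmap}, it suffices to check that in each such flat semiring \(S\), for all \(a,b\in S\) with \(a\neq b\), we have \(a^p+b^p=0\). Since \(S\) is flat, \(a^p+b^p=0\) holds automatically unless \(a^p=b^p\neq 0\). So the whole task is to show that the map \(a\mapsto a^p\) is injective on the elements whose \(p\)th power is nonzero.

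For this, fix \(a,b\in S\) with \(a^p=b^p\neq 0\). Then neither \(a\) nor \(b\) is \(0\), so both are words in the underlying free (commutative) semigroup or monoid. Moreover, \(a^p\) and \(b^p\) are nonzero in \(S\), so by the definition of multiplication in \(S(W)\), \(S_c(W)\), \(M(W)\) and \(M_c(W)\) they are the genuine concatenation powers of \(a\) and \(b\) in that free object. Hence it suffices to show that \(\mathbf u^p=\mathbf v^p\) implies \(\mathbf u=\mathbf v\) in each of the free structures \(X^+\), \(X^*\), \(X_c^+\) and \(X_c^*\).

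\begin{itemize}
\item \emph{Free commutative case.} Comparing \(\occ(x,\cdot)\) on both sides gives \(p\,\occ(x,\mathbf u)=p\,\occ(x,\mathbf v)\) for every letter \(x\), so \(\mathbf u=\mathbf v\).
\item \emph{Free noncommutative case.} Comparing lengths gives \(\ell(\mathbf u)=\ell(\mathbf v)\). Both \(\mathbf u\) and \(\mathbf v\) are prefixes of the same word \(\mathbf u^p=\mathbf v^p\), so \(\mathbf u=\mathbf v\). One could alternatively invoke Lemma~\ref{primitive}, but the prefix argument is more direct.
\item \emph{Empty word.} The empty word \(1\) only arises in the monoid cases; there \(1^p=1\), and \(\mathbf v^p=1\) forces \(\mathbf v=1\) by length.
\end{itemize}

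I expect no real obstacle here. The only point needing care is justifying that a nonzero product in these flat semirings coincides with the product in the ambient free object. This follows directly from the construction, since each product is either the concatenation (when it lies in \(W^{\leq}\)) or \(0\). Finally, note that the argument works for every \(p\geq 1\), not only for primes.
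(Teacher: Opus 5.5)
Your proposal is correct and follows the same route as the paper: reduce via Lemma~\ref{lem:flatsigmap} to showing that $a^p=b^p\neq 0$ forces $a=b$, then use injectivity of $\mathbf{u}\mapsto\mathbf{u}^p$ in the free (commutative) semigroup or monoid. The paper simply asserts that injectivity, while you justify it (occurrence counts in the commutative case, length plus a common prefix in the noncommutative case) and also check that nonzero products in these flat semirings agree with products in the free object.
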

\begin{proof}
Let \(S\) be a flat semiring of one of the forms \(S(W)\),
\(S_c(W)\), \(M(W)\) and \(M_c(W)\). Suppose that \(a,b\in S\) with
\(a\neq b\). We claim that \(a^p+b^p=0\). Assume otherwise, so that
\(a^p+b^p\neq 0\). Then \(a^p=b^p\neq 0\). Since any two words in
either the free monoid or the free commutative monoid whose \(p\)-th
powers coincide must themselves be equal, we obtain \(a=b\), a
contradiction. Hence \(a\neq b\) implies \(a^p+b^p=0\). By
Lemma~\ref{lem:flatsigmap}, \(S\) satisfies the identity \(\sigma_p\).
\end{proof}

Now let \(\mathcal{W}_{\mathbb{P}}\) be the ai-semiring variety
defined by the set \(\{\sigma_p \mid p\in \mathbb{P}\}\). By
Corollary~\ref{coro:4ssigmap}, \(\mathsf{V}(\flat(\mathbb{N}))\) is a subvariety
of \(\mathcal{W}_{\mathbb{P}}\).

For an ai-semiring variety \(\mathcal{V}\) and an integer \(n\geq 1\),
let \(\mathcal{V}^{(n)}\) denote the variety defined by all
\(n\)-variable identities of \(\mathcal{V}\), where an
\emph{\(n\)-variable identity} means an identity
\(\mathbf{u}\approx\mathbf{v}\) with
\(|c(\mathbf{u})\cup c(\mathbf{v})|\leq n\).
Then
\[
\mathcal{V}^{(1)} \supseteq \mathcal{V}^{(2)} \supseteq \cdots \supseteq
\mathcal{V}^{(n)} \supseteq \mathcal{V}^{(n+1)} \supseteq\cdots\supseteq \mathcal{V}.
\]
Moreover, if $\mathcal{V}$ is finitely based,
then there exists $n \geq 1$ such that $\mathcal{V}=\mathcal{V}^{(n)}$,
and so $\mathcal{V}^{(n)}$ is also finitely based.
Equivalently, if every $\mathcal{V}^{(n)}$ is nonfinitely based for sufficiently large $n$,
then $\mathcal{V}$ itself is also nonfinitely based.

Next, we introduce the main
theorem of this section.

\begin{theorem}\label{mainthm}
Let $\mathcal{V}$ be a variety in the interval $[\mathsf{V}(\flat(\mathbb{N})), \mathcal{W}_{\mathbb{P}}]$.
Then the variety $\mathcal{V}^{(n)}$ is nonfinitely based for each $n\geq 2$.
In particular, $\mathcal{V}$ itself is nonfinitely based.
\end{theorem}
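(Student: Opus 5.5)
The plan is the scheme announced at the start of this section, with the test algebra $A_p=\flat(\mathbb Z_p)$, the flat extension of the cyclic group of order $p$.

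\textbf{Reduction.} Fix $n\geq 2$ and a finite set $\Sigma$ of identities valid in $\mathcal V^{(n)}$. Since $\mathcal V\subseteq\mathcal V^{(n)}$ and $\mathsf V(\flat(\mathbb N))\subseteq\mathcal V$, every member of $\Sigma$ holds in $\flat(\mathbb N)$. On the other hand, $\sigma_p$ is a $2$-variable identity of $\mathcal W_{\mathbb P}$, hence of $\mathcal V$. Since $n\geq 2$, it is one of the identities defining $\mathcal V^{(n)}$. So it suffices to find, for every finite $\Sigma$ valid in $\flat(\mathbb N)$, a prime $p$ and an algebra $A_p$ with $A_p\vDash\Sigma$ and $A_p\nvDash\sigma_p$. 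Then $\Sigma$ is not a basis of $\mathcal V^{(n)}$. The final claim follows from the remark preceding the theorem.

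\textbf{Failure of $\sigma_p$ in $A_p$.} Take $A_p=\flat(\mathbb Z_p)$. Choose distinct $a,b\in\mathbb Z_p$. Then $a^p=b^p$ is the group identity, which is not the zero $\infty$. By Lemma~\ref{lem:flatsigmap}, $A_p\nvDash\sigma_p$.

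\textbf{Setting up $A_p\vDash\Sigma$.} As noted in Section~1, we may assume each identity in $\Sigma$ has the form $\bu\approx\bu+\bq$ with $\bq$ a word. Write $\bu=\bu_1+\cdots+\bu_m$ over variables $x_1,\dots,x_r$, and let $\vec u_i,\vec q\in\mathbb Z^r$ be the occurrence vectors. In a flat extension of a commutative group $G$, an assignment $x_j\mapsto z_j\in G$ gives $\bu$ a nonzero value exactly when $(\vec u_i-\vec u_1)\cdot\vec z=0$ for all $i$. The identity then fails exactly when in addition $(\vec q-\vec u_1)\cdot\vec z\neq 0$. (Assignments sending some variable to $\infty$ give zero on both sides, since every variable of $\bq$ lies in $c(\bu)$; otherwise the identity would already fail in $\flat(\mathbb N)$.) Let $D$ be the integer matrix with rows $\vec u_i-\vec u_1$.

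\textbf{Linear algebra mod $p$.} A failing assignment in $\mathbb Z_p$ says that $\vec q-\vec u_1$ is not in the row space of $D$ over $\mathbb F_p$. Choose $p$ larger than the absolute values of all minors of the finitely many matrices $D$ and $\bigl(\begin{smallmatrix}D\\ \vec q-\vec u_1\end{smallmatrix}\bigr)$ arising from $\Sigma$. Then ranks over $\mathbb F_p$ agree with ranks over $\mathbb Q$. Hence $\vec q-\vec u_1$ is not in the rational row space of $D$, and there is an integer vector $\vec w$ with $D\vec w=0$ and $(\vec q-\vec u_1)\cdot\vec w\neq 0$. That is, the identity fails in $\flat(\mathbb Z)$. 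So $A_p\vDash\Sigma$ for all large $p$, provided $\flat(\mathbb Z)\vDash\Sigma$.

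\textbf{Main obstacle.} The remaining step, which I expect to be the hardest, is to show $\flat(\mathbb Z)\in\mathsf V(\flat(\mathbb N))$, i.e.\ to turn the integer witness $\vec w$ into one with positive entries. The naive fix of shifting every variable by a large constant fails, because the summands of $\bu$ need not have equal lengths. I would first try to work with the finitely generated subsemiring of $\flat(\mathbb Z)$ generated by $w_1,\dots,w_r$. The idea is to realise it, modulo a suitable ideal, inside a direct power of $\flat(\mathbb N)$. The model is the proof of Lemma~\ref{a1tam}, where elements like $(a^{k_1},a,\dots,a)$ encode integer data inside direct products while ideal quotients kill unwanted products. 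Concretely, I would represent each integer $w_j$ as a difference $c_j-d_j$ of positive integers and use pairs of coordinates to make the differences the only surviving information.

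If this embedding resists, the fallback is to avoid $\flat(\mathbb Z)$ altogether. In that case one would work with the finitely many relevant $D$ and choose $\vec w$ directly in $D$'s kernel with positive entries. This requires the cone condition that $\ker D$ meets the positive orthant, and I would try to justify it from the nonzero evaluation of $\bu$ in $\flat(\mathbb N)$.
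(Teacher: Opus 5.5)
You have a genuine gap, and it is the choice of test algebra. Your reduction is fine, and so is the failure of $\sigma_p$ in $\flat(\mathbb Z_p)$. But $\flat(\mathbb Z_p)$ does not satisfy the identities of $\flat(\mathbb N)$, even for large $p$. The step you flag as the main obstacle is not merely hard; it is false.

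\textbf{Counterexample.} $\flat(\mathbb N)$ has no nonzero idempotent, so $x+x^2$ evaluates to $0$ under every assignment into $\flat(\mathbb N)$. Hence $\flat(\mathbb N)\vDash x+x^2\approx y+y^2$ and $\flat(\mathbb N)\vDash x+x^2\approx x+x^2+y$; the latter is in your normal form $\bu\approx\bu+\bq$. In $\flat(\mathbb Z)$ and in every $\flat(\mathbb Z_p)$, send $x$ to the group identity $e$ and $y$ to some $g\neq e$. The left side is $e$ and the right side is $\infty$. This has three consequences.
\begin{itemize}
\item $\flat(\mathbb Z)\notin\mathsf V(\flat(\mathbb N))$.
\item Your parenthetical claim that $c(\bq)\subseteq c(\bu)$, ``otherwise the identity would already fail in $\flat(\mathbb N)$'', is wrong precisely when $\bu$ is identically $0$ on $\flat(\mathbb N)$.
\item Your fallback cone condition cannot be derived, since $\bu$ need not have any nonzero evaluation in $\flat(\mathbb N)$.
\end{itemize}
Worse, take $\mathcal V=\mathsf V(\flat(\mathbb N))$, the case you need for $S_c(W)$. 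Then $x+x^2\approx y+y^2$ is a $2$-variable identity of $\mathcal V$, hence one of the defining identities of $\mathcal V^{(n)}$. Any finite basis $\Sigma$ therefore derives it, so no $\flat(\mathbb Z_p)$ can satisfy $\Sigma$. No cleverer argument rescues this choice of $A_p$.

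\textbf{What is missing: a positivity component.} The paper takes $A_p=(\{1,\dots,p\}\times\mathbb Z_p)\cup\{0\}$ with componentwise multiplication, sent to $0$ once the first coordinate exceeds $p$. Under $x_j\mapsto(d_j,\bar g_j)$, a nonzero value of $\bu$ forces $\bd=(d_j)$ to be a vector of positive integers with $\D_{\bu}\bd=\bzero$. So if $\ker_{\mathbb N}\D_{\bu}=\emptyset$, both sides vanish identically in $A_p$, exactly as in $\flat(\mathbb N)$.

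If the kernel is nonempty, fix a positive $\bc$ in it. Any rational kernel vector $\bz$ then gives a positive kernel vector $m\bc+k\bz$. This yields $c(\bu)=c(\bv)$, equality of the rational row spaces of $\D_{\bu}$ and $\D_{\bv}$, and $\bdelta\in\row_{\mathbb Q}(\D_{\bu})$. Your mod-$p$ row-space argument, whether via minors or via denominators, then handles the $\mathbb Z_p$-coordinate.

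The first coordinate does not spoil the failure of $\sigma_p$, since $(1,\bar 0)^p=(1,\bar 1)^p=(p,\bar 0)\neq 0$.
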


Next, we introduce the construction of the algebra \(A_p\), which is
the combination of the two familiar flat semirings
\(S(a^p)\) and \(\flat(\mathbb{Z}_p)\). Specifically, for each
\(p \in \mathbb{P}\), set
\[
A_p := (\{1,\dots,p\} \times \mathbb{Z}_p) \cup \{0\},
\]
where \(\mathbb{Z}_p\) is the cyclic group of integers modulo \(p\).
Define a multiplication \(\cdot\) on \(A_p\) by
\[
(m,\bar g) \cdot (n,\bar h) =
\begin{cases}
(m+n,\overline{g+h}), & m+n \leq p, \\
0, & m+n > p,
\end{cases}
\]
and
\[
(m,\bar g)\cdot 0 = 0\cdot (m,\bar g) = 0\cdot 0 = 0.
\]
It is straightforward to verify that \((A_p,\cdot)\) is a
\(0\)-cancellative commutative semigroup. Hence \(A_p\) becomes a
flat semiring.

The following lemma describes the precise relationship between the
algebra \(A_p\) and the identities \(\sigma_q\).
\begin{lemma}\label{lem:separation}
Let $p, q \in \mathbb{P}$. Then the flat semiring $A_p$ satisfies $\sigma_q$ if and only if $p\neq q$.
\end{lemma}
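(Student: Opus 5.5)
The plan is to use Lemma~\ref{lem:flatsigmap}: $A_p$ satisfies $\sigma_q$ if and only if $a^q+b^q=0$ for all distinct $a,b\in A_p$. Since $A_p$ is flat, $a^q+b^q\neq 0$ means exactly that $a^q=b^q\neq 0$. So the whole statement reduces to a question about the $q$th power map on $A_p$: it must be injective on the set of elements whose $q$th power is nonzero precisely when $p\neq q$.

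First compute powers directly from the multiplication. For $a=(m,\bar g)$ we have $a^q=(qm,\overline{qg})$ if $qm\le p$, and $a^q=0$ otherwise. Also $0^q=0$, so $0$ never contributes a nonzero power.

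\textbf{Case $p=q$.} A nonzero $p$th power requires $pm\le p$, so $m=1$. Then $a^p=(p,\overline{pg})=(p,\bar 0)$ for every $g$. Taking $a=(1,\bar 0)$ and $b=(1,\bar 1)$ gives distinct elements with $a^p=b^p=(p,\bar 0)\neq 0$. Hence $a^p+b^p\neq 0$, and $\sigma_p$ fails.

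\textbf{Case $p\neq q$.} Suppose $a=(m,\bar g)$ and $b=(n,\bar h)$ satisfy $a^q=b^q\neq 0$.
\begin{enumerate}[label=(\roman*), font=\normalfont]
\item Comparing first coordinates gives $qm=qn$, so $m=n$.
\item Comparing second coordinates gives $q(g-h)\equiv 0\pmod p$.
\item Since $p,q$ are distinct primes, $q$ is invertible modulo $p$, so $\bar g=\bar h$.
\end{enumerate}
Thus $a=b$. So distinct elements always have $a^q+b^q=0$, and $\sigma_q$ holds by Lemma~\ref{lem:flatsigmap}.

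There is no real obstacle here. The only points needing care are that invertibility of $q$ modulo $p$ uses $p\neq q$, and that the case $p=q$ needs $p\ge 2$ so that $\mathbb{Z}_p$ has two distinct elements. Both hold automatically because $p$ and $q$ are primes.
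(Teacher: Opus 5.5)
Your proof is correct and follows essentially the same argument as the paper. It uses the witnesses $(1,\bar 0)$ and $(1,\bar 1)$ to show $\sigma_p$ fails, and for $p\neq q$ it shows the $q$th power map is injective on elements with nonzero $q$th power, via Lemma~\ref{lem:flatsigmap} and the invertibility of $q$ modulo $p$; the only cosmetic difference is that the paper refutes $\sigma_p$ by directly computing $(x+y)^p=0\neq(p,\bar 0)=x^p+y^p$ instead of invoking Lemma~\ref{lem:flatsigmap}.
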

\begin{proof}
First, we show that \(A_p\) does not satisfy \(\sigma_p\). Take
$x=(1,\bar 0)$ and $y=(1,\bar 1)$. Then $x+y=0$, and so $(x+y)^p = 0$, whereas
\[
x^p+y^p
=(1,\bar 0)^p+(1,\bar 1)^p
=(p,\bar 0)+(p,\bar 0)=(p,\bar 0).
\]
Thus \(A_p\) does not satisfy \(\sigma_p\). Consequently, if \(A_p\)
satisfies \(\sigma_q\), then \(q\neq p\).

Conversely, assume that \(p\neq q\). We prove that \(A_p\) satisfies
\(\sigma_q\). We claim
that \(\alpha^q+\beta^q=0\) whenever \(\alpha,\beta\in A_p\) with
\(\alpha\neq \beta\). Suppose, to the contrary, that
\(\alpha^q+\beta^q\neq 0\). Then \(\alpha^q=\beta^q\neq 0\). Write
\(\alpha=(m_1,\overline{g_1})\) and
\(\beta=(m_2,\overline{g_2})\). Then
\[
(qm_1,\overline{qg_1})=(qm_2,\overline{qg_2})\neq 0.
\]
Thus \(qm_1=qm_2\leq p\) and \(\overline{qg_1}=\overline{qg_2}\).
Consequently, \(m_1=m_2\) and \(p\mid q(g_1-g_2)\). Since \(p\) and
\(q\) are distinct primes, they are coprime, so \(p\mid (g_1-g_2)\),
and hence \(\overline{g_1}=\overline{g_2}\). Therefore
\(\alpha=\beta\), contradicting \(\alpha\neq\beta\). Thus
\(\alpha^q+\beta^q=0\) whenever \(\alpha\neq\beta\). By
Lemma~\ref{lem:flatsigmap}, \(A_p\) satisfies \(\sigma_q\), as
required.
\end{proof}

\begin{remark}
Similarly, one can prove that for any primes \(p\) and \(q\), the
flat semiring \(\flat(\mathbb{Z}_p)\) satisfies the identity
\(\sigma_q\) if and only if \(p\neq q\). Thus, using identities
that do not involve the constant \(1\), we recover
\cite[Proposition~3.1]{mssemifield}. This also shows that the properties of
\(\flat(\mathbb{Z}_p)\) and \(\mathbb{Z}_p\) are very different.
\end{remark}

\begin{proposition}\label{propapp}
The variety $\mathsf{V}(\{A_p\mid p\in\mathbb{P}\})$ has continuum many subvarieties.
\end{proposition}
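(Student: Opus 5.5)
For each subset $P\subseteq\mathbb{P}$ I would set $\mathcal{V}_P:=\mathsf{V}(\{A_p\mid p\in P\})$ and show that the assignment $P\mapsto\mathcal{V}_P$ is injective. Every $\mathcal{V}_P$ is a subvariety of $\mathsf{V}(\{A_p\mid p\in\mathbb{P}\})$, and $\mathbb{P}$ is countably infinite, so it has $2^{\aleph_0}$ subsets. Injectivity therefore yields continuum many distinct subvarieties. The upper bound needs no separate argument: there are only countably many identities, hence at most continuum many varieties of ai-semirings.

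To separate the varieties I would use Lemma~\ref{lem:separation}. Let $P\neq Q$ be subsets of $\mathbb{P}$; without loss of generality there is a prime $q\in Q\setminus P$. For every $p\in P$ we have $p\neq q$, so by Lemma~\ref{lem:separation} the flat semiring $A_p$ satisfies $\sigma_q$. Identities are preserved by $\mathsf{H}$, $\mathsf{S}$ and $\mathsf{P}$, so $\mathcal{V}_P\vDash\sigma_q$. On the other hand, $A_q\in\mathcal{V}_Q$ and, again by Lemma~\ref{lem:separation}, $A_q\nvDash\sigma_q$. Hence $A_q\notin\mathcal{V}_P$, and so $\mathcal{V}_P\neq\mathcal{V}_Q$.

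The degenerate case $P=\emptyset$ can be handled either by letting $\mathcal{V}_\emptyset$ be the trivial variety, which is still distinct from every $\mathcal{V}_Q$ with $Q\neq\emptyset$, or simply by restricting attention to nonempty subsets, of which there are still continuum many.

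There is no real obstacle here: the whole argument rests on the separation lemma already proved. The only point needing care is the observation that satisfaction of $\sigma_q$ by every generator $A_p$, $p\in P$, passes to the whole variety $\mathcal{V}_P$.
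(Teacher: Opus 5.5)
Your proof is correct and follows essentially the same route as the paper: index the subvarieties $\mathsf{V}(\{A_p\mid p\in P\})$ by subsets $P\subseteq\mathbb{P}$ and separate any two of them using $\sigma_q$ for a prime $q$ in one subset but not the other, via Lemma~\ref{lem:separation}. Your remarks on the upper bound (countably many identities) and on the case $P=\emptyset$ are correct additions that the paper leaves implicit.
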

\begin{proof}
Let $\mathcal{V} = \mathsf{V}(\{A_p\mid p\in\mathbb{P}\})$.
For each subset $Q$ of $\mathbb{P}$, let $\mathcal{V}_Q$ denote the variety generated by $\{A_p\mid p\in Q\}$.
For distinct subsets $Q_1, Q_2$ of $\mathbb{P}$, suppose without loss of generality that $p \in Q_2 \setminus Q_1$.
By Lemma~\ref{lem:separation}, $\mathcal{V}_{Q_1}$ satisfies $\sigma_p$ but $\mathcal{V}_{Q_2}$ does not.
Hence $\mathcal{V}_{Q_1}\neq\mathcal{V}_{Q_2}$.
Thus distinct subsets of $\mathbb{P}$ give rise to distinct subvarieties of $\mathcal{V}$.
Since $\mathbb{P}$ has continuum many subsets, $\mathcal{V}$ has continuum many subvarieties.
\end{proof}

Dually, we have the following proposition.

\begin{proposition}
There are continuum many ai-semiring varieties containing $\mathcal{W}_{\mathbb{P}}$.
\end{proposition}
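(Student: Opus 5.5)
Dualizing Proposition~\ref{propapp}, I will associate to each subset \(Q\subseteq\mathbb{P}\) the variety \(\mathcal{W}_Q\) defined by the identities \(\{\sigma_p\mid p\in Q\}\). Then \(\mathcal{W}_{\mathbb{P}}\) is contained in every \(\mathcal{W}_Q\), since \(\mathcal{W}_{\mathbb{P}}\) satisfies all \(\sigma_p\). The plan is to show that the assignment \(Q\mapsto\mathcal{W}_Q\) is injective. Since \(\mathbb{P}\) is countably infinite, it has continuum many subsets, which then gives continuum many varieties containing \(\mathcal{W}_{\mathbb{P}}\). On the other side, there are only countably many ai-semiring identities, so there are at most continuum many ai-semiring varieties in total. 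Hence the count is exactly continuum.

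For injectivity, let \(Q_1\neq Q_2\) and assume without loss of generality that \(p\in Q_2\setminus Q_1\). By Lemma~\ref{lem:separation}, \(A_p\) satisfies \(\sigma_q\) for every prime \(q\neq p\), so in particular for every \(q\in Q_1\). Hence \(A_p\in\mathcal{W}_{Q_1}\). By the same lemma, \(A_p\) fails \(\sigma_p\), and \(\sigma_p\) is one of the defining identities of \(\mathcal{W}_{Q_2}\), so \(A_p\notin\mathcal{W}_{Q_2}\). Therefore \(\mathcal{W}_{Q_1}\neq\mathcal{W}_{Q_2}\).

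The only point needing care is that \(\mathcal{W}_{\emptyset}\) is the variety of all ai-semirings, and this is harmless. No step is a real obstacle: the entire separating content is already contained in Lemma~\ref{lem:separation}.
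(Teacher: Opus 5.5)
Your proposal is correct and follows the paper's proof essentially verbatim: the same varieties $\mathcal{W}_Q$ defined by $\{\sigma_p \mid p\in Q\}$, and the same separation of $\mathcal{W}_{Q_1}$ from $\mathcal{W}_{Q_2}$ by $A_p$ via Lemma~\ref{lem:separation}. Your extra observation that there are at most continuum many ai-semiring varieties, because there are only countably many identities, is correct; it is not in the paper's proof and sharpens the count to exactly continuum.
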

\begin{proof}
The proof is analogous to that of Proposition~\ref{propapp}, but with the roles of algebras and identities reversed.

For each subset $Q$ of $\mathbb{P}$, let $\mathcal{W}_Q$ denote the ai-semiring variety defined by $\{\sigma_p\mid p\in Q\}$.
For distinct subsets $Q_1, Q_2$ of $\mathbb{P}$, suppose without loss of generality that $p \in Q_2 \setminus Q_1$.
By Lemma~\ref{lem:separation}, $A_p$ lies in $\mathcal{W}_{Q_1}$, but not in $\mathcal{W}_{Q_2}$.
Hence $\mathcal{W}_{Q_1}\neq\mathcal{W}_{Q_2}$.
Thus distinct subsets of $\mathbb{P}$ give rise to distinct ai-semiring varieties.
Since $\mathbb{P}$ has continuum many subsets,
there are continuum many ai-semiring varieties containing $\mathcal{W}_{\mathbb{P}}$.
\end{proof}

Next, we characterize the identities holding in $\flat(\mathbb{N})$ and in $A_p$. To this end, we introduce the following conventions and notation.

We regard vectors as ordered tuples and do not distinguish
notationally between row and column vectors; the intended
interpretation is determined by context. For instance, if
\(\mathbf{D}\) is an \(m\times n\) matrix and \(\mathbf{a},\mathbf{b}\)
are vectors, then the product \(\mathbf{D}\mathbf{a}\) indicates that
\(\mathbf{a}\) is an \(n\)-dimensional column vector, while the
product \(\mathbf{b}\mathbf{D}\) indicates that \(\mathbf{b}\) is an
\(m\)-dimensional row vector. The product \(\mathbf{a}\mathbf{b}\) is
understood as the standard inner product, with \(\mathbf{a}\) a row
vector and \(\mathbf{b}\) a column vector of the same dimension.

Let \(\mathbf{u}=\mathbf{w}_1+\cdots+\mathbf{w}_m\) be an ai-semiring
term with \(c(\mathbf{u})\subseteq\{x_1,\dots,x_n\}\).
When considering the evaluation of \(\mathbf{u}\) in commutative
semirings, such as \(\flat(\mathbb{N})\) and \(A_p\), we need to
consider the canonical form of each \(\mathbf{w}_i\) in the commutative
sense, namely
\[
x_1^{\alpha_{i1}}x_2^{\alpha_{i2}}\cdots x_n^{\alpha_{in}},
\]
where \(\alpha_{ik}=\occ(x_k,\mathbf{w}_i)\geq 0\) for each \(i\).
Denote by
\[
\boldsymbol{\alpha}_i=(\alpha_{i1},\alpha_{i2},\ldots,\alpha_{in})
\]
the exponent vector of \(\mathbf{w}_i\).
Now, for \(m\geq 2\), define the difference matrix
\(\mathbf{D}_{\mathbf{u}}\) by
\[
\mathbf{D}_{\mathbf{u}}=
\begin{pmatrix}
\boldsymbol{\alpha}_2-\boldsymbol{\alpha}_1\\
\boldsymbol{\alpha}_3-\boldsymbol{\alpha}_1\\
\vdots\\
\boldsymbol{\alpha}_m-\boldsymbol{\alpha}_1
\end{pmatrix},
\]
and define it to be the \(1\times n\) zero matrix for \(m=1\).
Also, let
\[
\ker_{\mathbb{N}}\mathbf{D}_{\mathbf{u}}
=
\{\mathbf{d}\in\mathbb{N}^n\mid
\mathbf{D}_{\mathbf{u}}\mathbf{d}=\mathbf{0}\}.
\]
\begin{remark}
Here we define \(\mathbf{D}_{\mathbf{u}}\) over
\(\{x_1,\ldots,x_n\}\) rather than over \(c(\mathbf{u})\).
The reason is that in applications we need to discuss two terms
\(\mathbf{u}\) and \(\mathbf{v}\), and hence we need to work over
\(c(\mathbf{u})\cup c(\mathbf{v})\).
\end{remark}
\begin{lemma}\label{lem:evaluation}
Let $\mathbf{u}=\mathbf{w}_1+\cdots+\mathbf{w}_m$ be an ai-semiring term with $c(\mathbf{u})\subseteq \{x_1,\ldots,x_n\}$, and let $\mathbf{d}=(d_1,\ldots,d_n),\mathbf{g}=(g_1,\ldots,g_n)\in\mathbb N^n$, where $m,n\geq1$.
\begin{enumerate}[label=$(\arabic*)$]
\item \label{item 11}
In either $\flat(\mathbb{N})$ or $A_p$, under any evaluation,
if a variable in $c(\mathbf{u})$ is assigned $0$,
then $\mathbf{u}$ evaluates to $0$.

\item \label{item 12} Under the evaluation $\varphi(x_j)=a^{d_j}$ in $\flat(\mathbb{N})$,
$\varphi(\mathbf{u})\neq0$ if and only if $\D_{\bu}\bd = \bzero$.
In this case, $\varphi(\mathbf{u})=a^{\boldsymbol{\alpha}_i\mathbf{d}}$ for each $1\leq i\leq m$.

\item \label{item 13}
Under the evaluation $\psi(x_j)=(d_j,\bar g_j)$ in $A_p$ with $d_j\leq p$,
$\psi(\mathbf{u})\neq0$ if and only if
$\boldsymbol{\alpha}_i\mathbf{d}\leq p$ for every $1\leq i\leq m$,
$\D_{\bu}\bd = \bzero$, and
$\D_{\bu}\bg\equiv\bzero\pmod p$,
where the congruence is understood coordinatewise. In this case,
$\psi(\mathbf{u})=(\boldsymbol{\alpha}_i\mathbf{d},
\overline{\boldsymbol{\alpha}_i\mathbf{g}})$ for each
\(1\leq i\leq m\).
\end{enumerate}
\end{lemma}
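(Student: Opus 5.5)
The three parts come from one observation: in a commutative flat semiring, a term $\bu=\bw_1+\cdots+\bw_m$ evaluates to a nonzero element exactly when every summand evaluates to the \emph{same} nonzero element. Indeed, in a flat semiring a sum of finitely many elements is nonzero iff all summands are equal and nonzero, since $a+b=0$ for distinct $a,b$ and $0$ absorbs under addition. So I would first compute each monomial $\varphi(\bw_i)$ via its exponent vector $\balpha_i$, and then translate the condition ``all $\varphi(\bw_i)$ coincide and are nonzero'' into linear conditions on $\bd$ and $\bg$.

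For \ref{item 11}, suppose $x_j\in c(\bu)$ is assigned $0$. Then $x_j$ occurs in some summand $\bw_i$, and since $0$ is the multiplicative zero, $\varphi(\bw_i)=0$. Hence $\varphi(\bu)=0$, because $0$ is the additive maximum.

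For \ref{item 12}, I identify $\flat(\mathbb N)$ with $\{a^k\mid k\geq1\}\cup\{0\}$, where $a^ka^l=a^{k+l}$. Under $\varphi(x_j)=a^{d_j}$ with every $d_j\geq1$, commutativity gives $\varphi(\bw_i)=a^{\balpha_i\bd}$, which is never $0$. (If $\mathbb N$ is meant to include $0$, read $a^0$ as $1$; either way no monomial becomes $0$.) By the observation, $\varphi(\bu)\neq0$ iff $\balpha_i\bd=\balpha_1\bd$ for all $i$, that is, iff $(\balpha_i-\balpha_1)\bd=0$ for all $i$, which is exactly $\D_\bu\bd=\bzero$. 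For $m=1$ the condition $\D_\bu\bd=\bzero$ holds trivially, since $\D_\bu$ is the zero matrix, and $\varphi(\bu)\neq0$ also holds, so the two sides agree. The common value is then $a^{\balpha_i\bd}$ for every $i$.

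For \ref{item 13}, I would note that, by commutativity and induction on the product, a monomial $\prod_j(d_j,\bar g_j)^{\alpha_{ij}}$ in $A_p$ equals $(\balpha_i\bd,\overline{\balpha_i\bg})$ when $\balpha_i\bd\leq p$ and equals $0$ otherwise. The point needing care is that partial sums of first coordinates only increase, because all $d_j\geq1$ and the exponents are nonnegative. So the product is $0$ iff the total first coordinate exceeds $p$, and no intermediate overflow can occur without the total also exceeding $p$. With this, $\psi(\bu)\neq0$ iff every $\balpha_i\bd\leq p$ and all the pairs $(\balpha_i\bd,\overline{\balpha_i\bg})$ coincide. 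Equality of the first coordinates is $\D_\bu\bd=\bzero$, and equality of the second coordinates in $\mathbb Z_p$ is $\D_\bu\bg\equiv\bzero\pmod p$. The common value is as stated.

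I expect the only real subtlety to be the monotonicity argument for products in $A_p$. The remaining steps are routine bookkeeping.
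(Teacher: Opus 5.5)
Your proposal is correct and takes essentially the same route as the paper. Both arguments compute each summand from its exponent vector and then use flatness of addition (a finite sum is nonzero iff all summands are equal and nonzero) to turn the nonvanishing of $\bu$ into the conditions $\D_\bu\bd=\bzero$ and $\D_\bu\bg\equiv\bzero\pmod p$. The differences are cosmetic: you handle $m=1$ inside the uniform argument rather than separately, and you spell out the monotonicity of first coordinates in $A_p$, which the paper leaves as ``easy to verify''.
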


\begin{proof}
For \ref{item 11},
since $0$ is absorbing for addition and multiplication in both algebras, assigning a variable in $c(\bu)$ to $0$ forces the whole term $\bu$ to evaluate to $0$.

For \ref{item 12} and \ref{item 13}, we first consider the simple case \(m=1\). In $\flat(\mathbb N)$,
\[
\varphi(\mathbf{u}) = \varphi(\mathbf{w}_1)
= (a^{d_1})^{\alpha_{11}}(a^{d_2})^{\alpha_{12}}
\cdots (a^{d_n})^{\alpha_{1n}}
= a^{d_1\alpha_{11}+\cdots+d_n\alpha_{1n}}
= a^{\balpha_1\bd},
\]
where factors with exponent zero are omitted. Thus in $\flat(\mathbb{N})$, $\varphi(\bu) = a^{\balpha_1\bd} \neq 0$ and $\D_{\bu} = \bzero$ for every $\mathbf{d}$, since $\mathbf{D}_{\mathbf{u}}$ is the $1\times n$ zero matrix.
Similarly, in $A_p$
\[
\psi(\bu) = \psi(\bw_1) =
\begin{cases}
 (\balpha_1\bd, \overline{\balpha_1\bg}),& \balpha_1\bd\leq p,\\
 0,& \balpha_1\bd> p.
\end{cases}
\]
Thus $\psi(\bu)\neq 0$ precisely when $\boldsymbol{\alpha}_1\mathbf{d}\leq p$. Also $\mathbf{D}_{\mathbf{u}}\mathbf{d}=0$ and $\mathbf{D}_{\mathbf{u}}\mathbf{g}\equiv0\pmod p$ for every $\mathbf{d},\mathbf{g}\in \mathbb N^n$.
Thus assume $m\geq2$ for the remainder of the proof.

For \ref{item 12}, under $\varphi(x_j)=a^{d_j}$ in $\flat(\mathbb{N})$, the word $\mathbf{w}_i$ evaluates to $a^{\boldsymbol{\alpha}_i\mathbf{d}}$. Since addition is flat, $\varphi(\mathbf{u})$ is nonzero if and only if all $\boldsymbol{\alpha}_i\mathbf{d}$ are equal, that is, $\mathbf{D}_{\mathbf{u}}\mathbf{d}=0$. In this case, $\varphi(\mathbf{u})=a^{\boldsymbol{\alpha}_i\mathbf{d}}$ for each $1\leq i\leq m$.

For \ref{item 13}, it is easy to verify that, for each $1\leq i\leq m$,
\[
\psi(\bw_i) =
\begin{cases}
 (\balpha_i\bd, \overline{\balpha_i\bg}),& \balpha_i\bd\leq p,\\
 0,& \balpha_i\bd> p.
\end{cases}
\]
By flatness of addition, \(\psi(\mathbf{u})\neq 0\) if and only if all the summands of the
first component are nonzero and equal, and the summands of the second
component all coincide; equivalently, if and only if their first
coordinates are at most \(p\) and agree
(\(\mathbf{D}_{\mathbf{u}}\mathbf{d}=\mathbf{0}\)), and their second
coordinates agree
(\(\mathbf{D}_{\mathbf{u}}\mathbf{g}\equiv \mathbf{0}\pmod p\)).
Hence the three conditions are necessary and sufficient, and the common
value is
\((\boldsymbol{\alpha}_i\mathbf{d},\overline{\boldsymbol{\alpha}_i\mathbf{g}})\)
for each \(1\leq i\leq m\).
\end{proof}

\begin{lemma}\label{lem:validity}
Let $\mathbf{u}\approx\mathbf{v}$ be an identity such that
$c(\mathbf{u})\cup c(\mathbf{v})=\{x_1,\ldots,x_n\}$.
Then $\flat(\mathbb{N})$ satisfies $\mathbf{u}\approx\mathbf{v}$ if
and only if
\[
\ker_{\mathbb{N}}\mathbf{D}_{\mathbf{u}}=\ker_{\mathbb{N}}\mathbf{D}_{\mathbf{v}}
\]
and $\boldsymbol{\delta}\mathbf{d}=0$ for all
$\mathbf{d}\in \ker_{\mathbb{N}}\mathbf{D}_{\mathbf{u}}$, where
$\boldsymbol{\delta}=\boldsymbol{\alpha}-\boldsymbol{\beta}$,
$\boldsymbol{\alpha}$ is the exponent vector of any word in
$\mathbf{u}$, and $\boldsymbol{\beta}$ is the exponent vector of any
word in $\mathbf{v}$.
\end{lemma}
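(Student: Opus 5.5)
The plan is to reduce everything to Lemma~\ref{lem:evaluation}\ref{item 11}--\ref{item 12}. First I observe that every element of $\flat(\mathbb{N})$ is either $0$ or $a^{d}$ with $d\geq 1$. Hence an evaluation either sends some variable of $\{x_1,\dots,x_n\}=c(\bu)\cup c(\bv)$ to $0$, or it is of the form $\varphi(x_j)=a^{d_j}$ with $\bd\in\mathbb{N}^n$, all $d_j\geq 1$.

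Suppose a variable is sent to $0$. If it lies in $c(\bu)\cap c(\bv)$, both sides are $0$ by Lemma~\ref{lem:evaluation}\ref{item 11}. If it lies in only one of the contents, validity may genuinely fail. I will handle this by first noting the necessary condition $c(\bu)=c(\bv)$. This holds on both sides of the equivalence:
\begin{itemize}
\item If $\flat(\mathbb{N})$ satisfies $\bu\approx\bv$, send $x\in c(\bu)\setminus c(\bv)$ to $0$ and every other variable to $a$. The left side is $0$. The right side is a sum of powers of $a$; sending the remaining variables instead to suitable powers, for instance all to $a$ after substituting $a^{N}$ with large distinct weights if necessary, makes it nonzero. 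Indeed, taking $d_j=1$ gives a nonzero value exactly when all words of $\bv$ have equal length, so I would choose $\bd$ in $\ker_{\mathbb N}\D_{\bv}$ with positive entries.
\item Under the kernel condition the contents also agree. If $x_k\in c(\bu)\setminus c(\bv)$, the vector $\bd=\mathbf e_k$ lies in $\ker_{\mathbb N}\D_{\bv}$, because every word of $\bv$ has $x_k$-exponent $0$. Then $\bd\in\ker_{\mathbb N}\D_{\bu}$, and $\bdelta\bd=0$ forces $\alpha_k=\beta_k=0$ for every choice of words. This contradicts $x_k\in c(\bu)$.
\end{itemize}
This uses that $\mathbb{N}$ in the kernel definition includes $0$ coordinates, so $\mathbf e_k$ is admissible as a test vector even though evaluations need $d_j\geq1$.

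For the main case, fix $\bd$ with positive entries. By Lemma~\ref{lem:evaluation}\ref{item 12}, $\varphi(\bu)\neq0$ iff $\D_{\bu}\bd=\bzero$, and then $\varphi(\bu)=a^{\balpha\bd}$; likewise $\varphi(\bv)=a^{\bbeta\bd}$ when $\D_{\bv}\bd=\bzero$. So the identity holds under $\varphi$ iff either $\bd$ lies in neither kernel, or $\bd$ lies in both kernels and $\bdelta\bd=0$. Sufficiency follows immediately.

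For necessity I must pass from positive vectors to arbitrary $\bd\in\mathbb{N}^n$, which may have zero coordinates. This is the main obstacle. My first attempt is the homogeneity trick. The kernels are cones closed under addition and positive scaling, so given $\bd\in\ker_{\mathbb N}\D_{\bu}$ with zeros, choose a strictly positive $\bd_0\in\ker_{\mathbb N}\D_{\bu}\cap\ker_{\mathbb N}\D_{\bv}$. Then $N\bd+\bd_0$ is positive and lies in $\ker\D_{\bu}$, hence in $\ker\D_{\bv}$, and so $\bd=\bigl((N\bd+\bd_0)-\bd_0\bigr)/N\in\ker\D_{\bv}$; the same argument gives $\bdelta\bd=0$. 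Such a $\bd_0$ exists when the identity holds: take any positive vector in $\ker\D_{\bu}$, if one exists, which then lies in both kernels. If no positive vector exists, I instead argue with rational cone faces, or simply check whether the positive kernels already determine the full kernels.

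Because of these boundary (zero-coordinate) issues, I expect this step to need care, possibly using the content equality together with an evaluation at $0$ for variables absent from a subterm.
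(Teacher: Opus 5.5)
Your proof has a genuine gap, and it comes from the convention for $\mathbb N$. In this paper $\mathbb N$ is the set of \emph{positive} integers (stated at the start of Section~\ref{sec:flatn}). This is forced by Lemma~\ref{lem:evaluation}, part (2), where $a^{d_j}$ must be an element of $\flat(\mathbb N)$. It is also forced by Remark~\ref{remarkaap}, part (1), which treats $\ker_{\mathbb N}\D_{\bu}=\emptyset$; that would be impossible if $\bzero$ were admissible. So $\ker_{\mathbb N}\D_{\bu}$ contains only strictly positive vectors and may be empty.

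With this reading, your ``main obstacle'' disappears. Necessity is exactly your positive-entry analysis: for $\bd\in\ker_{\mathbb N}\D_{\bu}$, the evaluation $x_j\mapsto a^{d_j}$ makes $\bu$ nonzero, hence $\bv$ nonzero with the same value, and symmetry finishes. That is the paper's forward direction.

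Under your reading the implication is actually false, so the case you left open cannot be closed by cone arguments. For example, $x+x^2\approx y+y^2$ holds in $\flat(\mathbb N)$, since both sides are constantly $0$. Yet with zeros allowed the kernels are $\{(0,d)\mid d\geq 0\}$ and $\{(d,0)\mid d\geq 0\}$.

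The content step also fails. In the same example the identity holds, and (with the correct reading) the kernel hypothesis holds vacuously because both kernels are empty. But $c(\bu)=\{x\}\neq\{y\}=c(\bv)$, so neither of your bullets is true in general. The first breaks down exactly when $\ker_{\mathbb N}\D_{\bv}$ has no vector. The second uses $\mathbf e_k\notin\mathbb N^n$. Your sufficiency argument disposes of zero assignments by appealing to $c(\bu)=c(\bv)$, so it has a hole too, even though your treatment of evaluations with all values nonzero is correct.

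The missing idea, which is the paper's $\psi_k$ argument, is to use $\mathbf e_k$ only as a direction from a positive base point, and only when $\bu$ actually takes a nonzero value. Suppose $\varphi(\bu)\neq 0$. Extend the exponents of $\varphi$ on $c(\bu)$ by $1$'s to get a positive $\bc\in\ker_{\mathbb N}\D_{\bu}$. Now take $x_k\in c(\bv)\setminus c(\bu)$. The $k$th column of $\D_{\bu}$ is zero, so $\bc+t\mathbf e_k\in\ker_{\mathbb N}\D_{\bu}$ for all $t\geq 0$. Choose $\bbeta$ from a word of $\bv$ containing $x_k$. Then $\bdelta(\bc+t\mathbf e_k)=-t\beta_k\neq 0$ for $t\geq 1$, contradicting the hypothesis.

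Hence $c(\bv)\subseteq c(\bu)$, so every variable of $\bv$ receives a nonzero value, and your positive case gives $\varphi(\bv)=\varphi(\bu)$. By symmetry, every remaining evaluation makes both sides $0$.
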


\begin{proof}
Suppose that
\(\flat(\mathbb{N})\vDash\mathbf{u}\approx\mathbf{v}\). Take any
\(\mathbf{d}=(d_1,\ldots,d_n)\in\ker_{\mathbb{N}}\mathbf{D}_{\mathbf{u}}\), and
define the
assignment \(\varphi\colon\{x_1,\ldots,x_n\}\to \flat(\mathbb{N})\)
by \(\varphi(x_i)=a^{d_i}\), \(1\leq i\leq n\). By
Lemma~\ref{lem:evaluation}\ref{item 12},
\(\varphi(\mathbf{u})=a^{\boldsymbol{\alpha}\mathbf{d}}\). Then
\[
\varphi(\mathbf{v})=\varphi(\mathbf{u})
=a^{\boldsymbol{\alpha}\mathbf{d}}\neq0,
\]
since \(\flat(\mathbb{N})\vDash\mathbf{u}\approx\mathbf{v}\).
Applying Lemma~\ref{lem:evaluation}\ref{item 12} again, we get
\(\mathbf{d}\in\ker_{\mathbb{N}}\mathbf{D}_{\mathbf{v}}\) and
\(\varphi(\mathbf{v})=a^{\boldsymbol{\beta}\mathbf{d}}\). Hence
\(\boldsymbol{\alpha}\mathbf{d}=\boldsymbol{\beta}\mathbf{d}\), and
so \(\boldsymbol{\delta}\mathbf{d}=0\). Interchanging \(\mathbf{u}\)
and \(\mathbf{v}\) proves the forward direction. Thus
\(\ker_{\mathbb{N}}\mathbf{D}_{\mathbf{u}}=\ker_{\mathbb{N}}\mathbf{D}_{\mathbf{v}}\) and
\(\boldsymbol{\delta}\mathbf{d}=0\) for all
\(\mathbf{d}\in \ker_{\mathbb{N}}\mathbf{D}_{\mathbf{u}}\).
Notice that the case
\(\ker_{\mathbb{N}}\mathbf{D}_{\mathbf{u}} = \ker_{\mathbb{N}}\mathbf{D}_{\mathbf{v}} =\emptyset\) is included in the above
argument.

Conversely, assume that
\(\ker_{\mathbb{N}}\mathbf{D}_{\mathbf{u}}=\ker_{\mathbb{N}}\mathbf{D}_{\mathbf{v}}\) and
\(\boldsymbol{\delta}\mathbf{d}=0\) for all
\(\mathbf{d}\in \ker_{\mathbb{N}}\mathbf{D}_{\mathbf{u}}\). Let
\[
\varphi\colon\{x_1,x_2,\ldots,x_n\}\to \flat(\mathbb{N})
\]
be an arbitrary assignment.

We first claim that either
\(\varphi(\mathbf{u})=\varphi(\mathbf{v})=0\), or both are nonzero.
Suppose otherwise. Without loss of generality, assume that
\(\varphi(\mathbf{u})\neq0\) but \(\varphi(\mathbf{v})=0\). Then for
each \(x_i\in c(\mathbf{u})\) there is \(d_i\in \mathbb{N}\) such
that \(\varphi(x_i)=a^{d_i}\). Assume that for each
\(x_j\in c(\mathbf{v})\) there is \(d_j\geq 1\) such that
\(\varphi(x_j)=a^{d_j}\). Then
\(\mathbf{d}=(d_1,\ldots,d_n)\in
\ker_{\mathbb{N}}\mathbf{D}_{\mathbf{u}}=\ker_{\mathbb{N}}\mathbf{D}_{\mathbf{v}}\),
and so \(\varphi(\mathbf{v}) \neq 0\), which is a contradiction.
This implies that there exists
\(x_{i_0}\in c(\mathbf{v})\setminus c(\mathbf{u})\) with
\(\varphi(x_{i_0})=0\).

For each \(k\geq 1\), define the assignment
\(\psi_k\colon\{x_1,\ldots,x_n\}\to \flat(\mathbb{N})\) by
\[
\psi_k(x_i)=
\begin{cases}
a^{d_i}, & x_i\in c(\mathbf{u}),\\
a^k,     & x_i\in c(\mathbf{v})\setminus c(\mathbf{u}).
\end{cases}
\]
Let \(\mathbf{d}_k=(t_{k1},\ldots,t_{kn})\) be such that
\(\psi_k(x_i)=a^{t_{ki}}\). Thus
\(\psi_k(\mathbf{u})=a^{\boldsymbol{\alpha}\mathbf{d}_k}
=\varphi(\mathbf{u})\), so
\(\mathbf{d}_k\in \ker_{\mathbb{N}}\mathbf{D}_{\mathbf{u}}\). Since
\(\ker_{\mathbb{N}}\mathbf{D}_{\mathbf{u}}=\ker_{\mathbb{N}}\mathbf{D}_{\mathbf{v}}\), we
have
\(\mathbf{d}_k\in \ker_{\mathbb{N}}\mathbf{D}_{\mathbf{v}}\). Also
\[
\boldsymbol{\delta}\mathbf{d}_k
=\boldsymbol{\alpha}\mathbf{d}_k-\boldsymbol{\beta}\mathbf{d}_k=0,
\]
which gives
\[
\psi_k(\mathbf{v})=a^{\boldsymbol{\beta}\mathbf{d}_k}
=a^{\boldsymbol{\alpha}\mathbf{d}_k}=\varphi(\mathbf{u}).
\]
But \(x_{i_0}\in c(\mathbf{v})\) and
\(\psi_k(x_{i_0})=a^k\). As \(k\) varies,
\(\psi_k(\mathbf{v})\) varies accordingly, and so it cannot remain
equal to \(\varphi(\mathbf{u})\) for all \(k\), a contradiction.
Hence the claim holds.

Now suppose that
\(\varphi(\mathbf{u}),\varphi(\mathbf{v})\neq0\). Then by
Lemma~\ref{lem:evaluation}, for each \(1\leq i\leq n\), there is
\(d_i>0\) such that \(\varphi(x_i)=a^{d_i}\). Let
\(\mathbf{d}=(d_1,\ldots,d_n)\). Then
\(\varphi(\mathbf{u})=a^{\boldsymbol{\alpha}\mathbf{d}}\) and
\(\varphi(\mathbf{v})=a^{\boldsymbol{\beta}\mathbf{d}}\). Since
\(\boldsymbol{\delta}\mathbf{d}=0\), it follows that
\(\varphi(\mathbf{u})=\varphi(\mathbf{v})\). This implies that
\(\flat(\mathbb{N})\vDash\mathbf{u}\approx\mathbf{v}\).
\end{proof}

Combining Lemmas~\ref{lem:evaluation} and~\ref{lem:validity},
we obtain the following remark.
\begin{remark}\label{remarkaap}
Suppose that $\flat(\mathbb N)\vDash \bu\approx \bv$. Then the following statements hold.
\begin{enumerate}[label=$(\arabic*)$]
\item \label{item 26092501} If
\(\ker_{\mathbb{N}}\mathbf{D}_{\mathbf{u}}=\emptyset\), then \(\mathbf{u}\) and
\(\mathbf{v}\) evaluate to \(0\) under every assignment into
\(\flat(\mathbb{N})\) or \(A_p\).

\item \label{item 26092502} If
\(\ker_{\mathbb{N}}\mathbf{D}_{\mathbf{u}}\neq\emptyset\), then
\(c(\mathbf{u}) = c(\mathbf{v})\).
\end{enumerate}
\end{remark}

We shall use the following standard fact from linear algebra.

\begin{lemma}\label{lem:rowker}
Let \(\mathbf{D}\) be an \(m\times n\) matrix over \(\mathbb{Q}\),
where \(m,n\geq 1\). Then
\[
\row_{\mathbb{Q}}(\mathbf{D})
= (\ker_{\mathbb{Q}} \mathbf{D})^{\perp},
\]
where
\[
\row_{\mathbb{Q}}(\mathbf{D})
= \{\by\mathbf{D} \mid \by \in \mathbb{Q}^m\},
\qquad
\ker_{\mathbb{Q}} \mathbf{D}
= \{\bz \in \mathbb{Q}^n \mid \mathbf{D}\bz = \bzero\},
\]
and
\[
(\ker_{\mathbb{Q}} \mathbf{D})^{\perp}
= \{\balpha\in \mathbb{Q}^n \mid
\balpha\bbeta=0 \text{ for all }
\bbeta\in \ker_{\mathbb{Q}} \mathbf{D}\}.
\]
\end{lemma}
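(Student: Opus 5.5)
The statement is that the row space of \(\mathbf{D}\) over \(\mathbb{Q}\) equals the orthogonal complement of its kernel. I would prove this by a double inclusion combined with a dimension count, working entirely in the finite-dimensional space \(\mathbb{Q}^n\) with the standard bilinear form \(\balpha\bbeta=\sum_i \alpha_i\beta_i\).

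\emph{Easy inclusion.} First I show \(\row_{\mathbb{Q}}(\mathbf{D})\subseteq(\ker_{\mathbb{Q}}\mathbf{D})^{\perp}\). Take \(\by\in\mathbb{Q}^m\) and \(\bz\in\ker_{\mathbb{Q}}\mathbf{D}\). By associativity of matrix multiplication,
\[
(\by\mathbf{D})\bz=\by(\mathbf{D}\bz)=\by\bzero=0.
\]

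\emph{Dimension count.} Next I compare dimensions. Let \(r\) be the rank of \(\mathbf{D}\), which is the same whether computed by rows or by columns. Then \(\dim\row_{\mathbb{Q}}(\mathbf{D})=r\), and by rank--nullity \(\dim\ker_{\mathbb{Q}}\mathbf{D}=n-r\).

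I also need \(\dim K^{\perp}=n-\dim K\) for every subspace \(K\subseteq\mathbb{Q}^n\). To see this, choose a basis \(\bz_1,\dots,\bz_s\) of \(K\) and let \(\mathbf{B}\) be the \(s\times n\) matrix with these rows. Then \(K^{\perp}=\ker_{\mathbb{Q}}\mathbf{B}\), since \(\balpha\) is orthogonal to every vector of \(K\) exactly when it is orthogonal to each basis vector. Because \(\mathbf{B}\) has rank \(s\), rank--nullity gives \(\dim K^{\perp}=n-s\). When \(K=\{\bzero\}\) there are no rows, and \(K^{\perp}=\mathbb{Q}^n\) directly.

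Applying this with \(K=\ker_{\mathbb{Q}}\mathbf{D}\) gives \(\dim(\ker_{\mathbb{Q}}\mathbf{D})^{\perp}=n-(n-r)=r\).

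\emph{Conclusion.} A subspace of a finite-dimensional space with the same dimension as the containing space equals it. So the inclusion from the first step, together with the equal dimensions, gives equality. No step is a real obstacle. The only point requiring care is that non-degeneracy of the form over \(\mathbb{Q}\) is invoked only through rank--nullity, not through positivity or any other property.
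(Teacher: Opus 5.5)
Your proof is correct and complete. The inclusion $\row_{\mathbb{Q}}(\mathbf{D})\subseteq(\ker_{\mathbb{Q}}\mathbf{D})^{\perp}$ follows from associativity. The equality $\dim(\ker_{\mathbb{Q}}\mathbf{D})^{\perp}=n-(n-r)=r=\dim\row_{\mathbb{Q}}(\mathbf{D})$ follows from rank--nullity, applied once to $\mathbf{D}$ and once to the matrix $\mathbf{B}$ whose rows form a basis of the kernel. Together these force equality.

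The paper gives no proof of this lemma; it simply invokes it as a standard fact from linear algebra. So there is no competing route to compare against, and your argument is the standard one. As you note, it uses nothing specific to $\mathbb{Q}$ and so holds over any field. The paper does not need this, though: in Lemma~\ref{lem:transfer}, after reducing modulo $p$, it only uses the easy implication that $\overline{\mathbf{D}}_{\mathbf{v}}=\overline{\mathbf{R}}\,\overline{\mathbf{D}}_{\mathbf{u}}$ gives $\ker_{\mathbb{F}_p}\overline{\mathbf{D}}_{\mathbf{u}}\subseteq\ker_{\mathbb{F}_p}\overline{\mathbf{D}}_{\mathbf{v}}$.
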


\begin{lemma}\label{lem:kernel}
Suppose that $\flat(\mathbb N)\vDash \bu\approx \bv$ with $\ker_{\mathbb N} \D_{\bu}\neq \emptyset$.
Then
\[
\bdelta \in \row_{\mathbb{Q}}(\mathbf{D}_{\mathbf{u}})
= \row_{\mathbb{Q}}(\mathbf{D}_{\mathbf{v}}).
\]
Equivalently, there exist rational matrices \(\mathbf{R},
\mathbf{S}\) and a rational vector \(\blambda\) such that
\[
\mathbf{D}_{\mathbf{v}} = \mathbf{R} \mathbf{D}_{\mathbf{u}},\qquad
\mathbf{D}_{\mathbf{u}} = \mathbf{S} \mathbf{D}_{\mathbf{v}},\qquad
\bdelta = \blambda \mathbf{D}_{\mathbf{u}}.
\]
\end{lemma}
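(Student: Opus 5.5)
By Lemma~\ref{lem:validity}, the hypothesis gives $K:=\ker_{\mathbb N}\D_{\bu}=\ker_{\mathbb N}\D_{\bv}\neq\emptyset$ and $\bdelta\bd=0$ for all $\bd\in K$. The plan is to pass from the positive integer kernel $K$ to the rational kernels, and then apply Lemma~\ref{lem:rowker}. The key claim is that $K$ spans $\ker_{\mathbb Q}\D_{\bu}$ over $\mathbb Q$, and likewise spans $\ker_{\mathbb Q}\D_{\bv}$. Granting this, the two rational kernels coincide, since both equal $\operatorname{span}_{\mathbb Q}K$. Moreover $\bdelta$ annihilates $\operatorname{span}_{\mathbb Q}K$, because it annihilates each element of $K$. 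Lemma~\ref{lem:rowker} then yields
\[
\row_{\mathbb Q}(\D_{\bu})=(\ker_{\mathbb Q}\D_{\bu})^{\perp}=(\ker_{\mathbb Q}\D_{\bv})^{\perp}=\row_{\mathbb Q}(\D_{\bv}),
\]
and $\bdelta$ lies in this common space. The matrix reformulation is immediate. Each row of $\D_{\bv}$ lies in $\row_{\mathbb Q}(\D_{\bu})$, which gives $\mathbf R$. Symmetrically each row of $\D_{\bu}$ lies in $\row_{\mathbb Q}(\D_{\bv})$, which gives $\mathbf S$. Finally $\bdelta\in\row_{\mathbb Q}(\D_{\bu})$ gives $\blambda$.

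The main step is the spanning claim, and I would prove it by a density argument. Fix $\bd_0\in K$; all its entries are positive integers. Let $\bz\in\ker_{\mathbb Q}\D_{\bu}$ be arbitrary. For a sufficiently large positive integer $N$, the vector $N\bd_0+\bz$ has all coordinates positive rationals, since the coordinates of $\bd_0$ are at least $1$ and $\bz$ is fixed. Choose a positive integer $M$ clearing the denominators of $\bz$. Then $M(N\bd_0+\bz)$ lies in $\mathbb N^n$ and satisfies $\D_{\bu}\cdot M(N\bd_0+\bz)=\bzero$, so it belongs to $K$. It follows that
\[
\bz=\tfrac1M\bigl(M(N\bd_0+\bz)\bigr)-N\bd_0\in\operatorname{span}_{\mathbb Q}K.
\]
Conversely, $K\subseteq\ker_{\mathbb Q}\D_{\bu}$ trivially. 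The identical argument works for $\D_{\bv}$, because $K=\ker_{\mathbb N}\D_{\bv}$ as well.

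The only subtle point is that $\mathbb N$ here denotes the positive integers, so kernel vectors must have all coordinates strictly positive. The argument needs the fixed element $\bd_0$ of $K$, which is exactly why the nonemptiness hypothesis appears. Without it the rational kernels could differ, since $\ker_{\mathbb N}$ being empty for both matrices carries no information. I expect no other obstacles.
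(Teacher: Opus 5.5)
Your proof is correct and takes essentially the same route as the paper. Both fix an element of $\ker_{\mathbb N}\D_{\bu}$, add a large multiple of it to an arbitrary rational kernel vector and clear denominators to land in $\ker_{\mathbb N}\D_{\bu}=\ker_{\mathbb N}\D_{\bv}$, deduce that the rational kernels coincide and that $\bdelta$ annihilates them, and finish with Lemma~\ref{lem:rowker}; your ``$K$ spans both rational kernels'' is just a repackaging of the paper's direct linearity computation.
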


\begin{proof}
Since
\(\ker_{\mathbb{N}}\mathbf{D}_{\mathbf{u}}\neq \emptyset\), fix an arbitrary
vector
\(\bc \in \ker_{\mathbb{N}}\mathbf{D}_{\mathbf{u}}\). For each
\(\bz \in \ker_{\mathbb{Q}}\mathbf{D}_{\mathbf{u}}\), choose an
integer \(k \geq 1\) such that \(k\bz \in \mathbb{Z}^n\). Since
\(\bc\in \mathbb{N}^n\), we may choose \(m\) large enough that
\(m\bc + k\bz\in \mathbb{N}^n\). By linearity of
\(\mathbf{D}_{\mathbf{u}}\), both \(\bc\) and \(m\bc + k\bz\) lie in
\(\ker_{\mathbb{N}}\mathbf{D}_{\mathbf{u}} = \ker_{\mathbb{N}}\mathbf{D}_{\mathbf{v}}\).
Therefore
\[
k\mathbf{D}_{\mathbf{v}}\bz
= \mathbf{D}_{\mathbf{v}}(k\bz)
= \mathbf{D}_{\mathbf{v}}(m\bc + k\bz)
  - m\mathbf{D}_{\mathbf{v}}\bc
= 0.
\]
Thus \(\mathbf{D}_{\mathbf{v}}\bz = 0\), and hence $\bz\in \ker_{\mathbb{Q}}\mathbf{D}_{\mathbf{v}}$.
This implies that  \(\ker_{\mathbb{Q}}\mathbf{D}_{\mathbf{u}} \subseteq
\ker_{\mathbb{Q}}\mathbf{D}_{\mathbf{v}}\); the reverse inclusion
follows by symmetry. Thus
\(\ker_{\mathbb{Q}}\mathbf{D}_{\mathbf{u}} =
\ker_{\mathbb{Q}}\mathbf{D}_{\mathbf{v}}\). By
Lemma~\ref{lem:rowker}, we deduce that
\[
\row_{\mathbb{Q}}(\mathbf{D}_{\mathbf{u}})
= (\ker_{\mathbb{Q}}\mathbf{D}_{\mathbf{u}})^{\perp}
= (\ker_{\mathbb{Q}}\mathbf{D}_{\mathbf{v}})^{\perp}
= \row_{\mathbb{Q}}(\mathbf{D}_{\mathbf{v}}).
\]
Similarly, since both \(m\bc + k\bz\) and \(\bc\) lie in
\(\ker_{\mathbb{N}}\mathbf{D}_{\mathbf{u}}\), and since
\(\bdelta\bd = 0\) for all
\(\bd\in \ker_{\mathbb{N}}\mathbf{D}_{\mathbf{u}}\), we deduce that
\[
k\bdelta\bz
= \bdelta(k\bz)
= \bdelta(m\bc + k\bz) - m\bdelta(\bc)
= 0 - 0
= 0.
\]
Thus \(\bdelta \bz = 0\) for every \(\bz \in
\ker_{\mathbb{Q}}\mathbf{D}_{\mathbf{u}}\), and so
\(\bdelta \in (\ker_{\mathbb{Q}}\mathbf{D}_{\mathbf{u}})^{\perp}
= \row_{\mathbb{Q}}(\mathbf{D}_{\mathbf{u}})\).
\end{proof}

\begin{lemma}\label{lem:transfer}
Let \(\sigma\) be an identity satisfied by \(\flat(\mathbb{N})\).
Then there exists \(N_\sigma \geq 1\) such that \(A_p\) satisfies
\(\sigma\) for all primes \(p > N_\sigma\).
\end{lemma}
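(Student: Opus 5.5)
We argue by transferring the rational linear relations of Lemma~\ref{lem:kernel} to congruences modulo $p$. Write $\sigma$ as $\bu\approx\bv$ with $c(\bu)\cup c(\bv)=\{x_1,\dots,x_n\}$, and let $\balpha$ and $\bbeta$ be the exponent vectors of fixed words of $\bu$ and $\bv$, with $\bdelta=\balpha-\bbeta$. The plan is to show that, for all sufficiently large $p$, any assignment into $A_p$ that makes $\bu$ nonzero also makes $\bv$ nonzero with the same value, and symmetrically.

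\textbf{Case $\ker_{\mathbb N}\D_{\bu}=\emptyset$.} By Remark~\ref{remarkaap}\ref{item 26092501}, both sides evaluate to $0$ under every assignment into $A_p$. So $A_p\vDash\sigma$ for every $p$, and we may take $N_\sigma=1$.

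\textbf{Case $\ker_{\mathbb N}\D_{\bu}\neq\emptyset$.} By Remark~\ref{remarkaap}\ref{item 26092502} we have $c(\bu)=c(\bv)$. By Lemma~\ref{lem:kernel} there are rational $\mathbf R,\mathbf S,\blambda$ with $\D_{\bv}=\mathbf R\D_{\bu}$, $\D_{\bu}=\mathbf S\D_{\bv}$ and $\bdelta=\blambda\D_{\bu}$. Let $M\geq1$ be a common denominator of all entries of $\mathbf R,\mathbf S,\blambda$, so that $M\mathbf R$, $M\mathbf S$ and $M\blambda$ are integral. Set $N_\sigma=M$. Then every prime $p>N_\sigma$ satisfies $p\nmid M$.

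Now fix such a $p$ and an assignment $\psi$ into $A_p$. If some variable of $c(\bu)=c(\bv)$ is sent to $0$, both sides are $0$ by Lemma~\ref{lem:evaluation}\ref{item 11}. Otherwise write $\psi(x_j)=(d_j,\bar g_j)$ with $1\le d_j\le p$, and suppose $\psi(\bu)\neq0$. By Lemma~\ref{lem:evaluation}\ref{item 13}, this means $\balpha_i\bd\le p$ for all words of $\bu$, $\D_{\bu}\bd=\bzero$, and $\D_{\bu}\bg\equiv\bzero\pmod p$. We then check the three conditions of Lemma~\ref{lem:evaluation}\ref{item 13} for $\bv$:
\begin{enumerate}[label=$(\roman*)$]
\item We have $\D_{\bv}\bd=\mathbf R\D_{\bu}\bd=\bzero$.
\item We have $M\D_{\bv}\bg=(M\mathbf R)(\D_{\bu}\bg)\equiv\bzero\pmod p$, and cancelling $M$ (legitimate since $p\nmid M$) gives $\D_{\bv}\bg\equiv\bzero$.
\item By (i), all $\bbeta_j\bd$ coincide with $\bbeta\bd$. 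Moreover $\bdelta\bd=\blambda\D_{\bu}\bd=0$, so $\bbeta\bd=\balpha\bd\le p$.
\end{enumerate}
Hence $\psi(\bv)\neq0$. For the values, the first coordinates agree by (iii). For the second coordinates, $M\bdelta\bg=(M\blambda)\D_{\bu}\bg\equiv0\pmod p$, so $\bdelta\bg\equiv0$, i.e.\ $\overline{\balpha\bg}=\overline{\bbeta\bg}$. Thus $\psi(\bu)=\psi(\bv)$. The symmetric argument, using $\mathbf S$ and $-\blambda$ with $\bdelta=\blambda\D_{\bu}=\blambda\mathbf S\D_{\bv}$, shows that $\psi(\bv)\neq0$ implies $\psi(\bu)\neq0$. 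Therefore $A_p\vDash\sigma$.

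The main obstacle is that a rational relation need not survive reduction modulo $p$: the denominators of $\mathbf R$, $\mathbf S$ and $\blambda$ are fixed by $\sigma$ but could share a factor with $p$. This is exactly what forces $p$ to be large, and choosing $N_\sigma$ as the common denominator $M$ handles it. The remaining parts, namely the bound $\le p$ on first coordinates and matching contents, are handled by $\bdelta\bd=0$ and Remark~\ref{remarkaap}.
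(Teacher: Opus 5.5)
Your proof is correct and is essentially the paper's argument. It uses the same case split via Remark~\ref{remarkaap}, the same rational $\mathbf R,\mathbf S,\blambda$ from Lemma~\ref{lem:kernel}, and $N_\sigma$ taken as a common denominator so that these relations survive reduction modulo $p$. The only difference is cosmetic: you check the conditions of Lemma~\ref{lem:evaluation}\ref{item 13} for $\bv$ directly by clearing the denominator $M$ (in the reverse direction $\blambda\mathbf S$ has denominators dividing $M^2$, which is still prime to $p$, so this is fine), whereas the paper first records $\ker_{\mathbb F_p}\overline{\D}_{\bu}=\ker_{\mathbb F_p}\overline{\D}_{\bv}$ and $\overline{\bdelta\bg}=\overline{\bzero}$, and uses $\ker_{\mathbb N}\D_{\bu}=\ker_{\mathbb N}\D_{\bv}$ for the first coordinates.
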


\begin{proof}
We may assume that \(\sigma\) is \(\mathbf{u} \approx \mathbf{v}\).
Since \(\flat(\mathbb{N})\) satisfies
\(\mathbf{u} \approx \mathbf{v}\), Lemma~\ref{lem:validity} gives
\(\ker_{\mathbb{N}}\mathbf{D}_{\mathbf{u}} = \ker_{\mathbb{N}}\mathbf{D}_{\mathbf{v}}\)
and \(\boldsymbol{\delta}\mathbf{d} = 0\) for all
\(\mathbf{d} \in \ker_{\mathbb{N}}\mathbf{D}_{\mathbf{u}}\), where
\(\boldsymbol{\delta} = \boldsymbol{\alpha} -
\boldsymbol{\beta}\), with \(\boldsymbol{\alpha}\) the exponent
vector of any word in \(\mathbf{u}\) and \(\boldsymbol{\beta}\)
the exponent vector of any word in \(\mathbf{v}\).

If \(\ker_{\mathbb{N}}\mathbf{D}_{\mathbf{u}} = \emptyset\), then by
Remark~\ref{remarkaap}\ref{item 26092501},
\(\varphi(\mathbf{u}) = \varphi(\mathbf{v}) = 0\) for every
assignment \(\varphi \colon c(\mathbf{u}) \cup c(\mathbf{v}) \to
A_p\). Hence \(\sigma\) holds in \(A_p\) for every prime \(p\), so
we may take \(N_\sigma = 1\).

Assume now that
\(\ker_{\mathbb{N}}\mathbf{D}_{\mathbf{u}} \neq \emptyset\). By
Remark~\ref{remarkaap}\ref{item 26092502},
\(c(\mathbf{u}) = c(\mathbf{v})\); denote this common content by
\(\{x_1,\ldots,x_n\}\). Also, Lemma~\ref{lem:kernel} yields
rational matrices \(\mathbf{R}, \mathbf{S}\) and a rational vector
\(\boldsymbol{\lambda}\) such that
\[
\mathbf{D}_{\mathbf{v}} = \mathbf{R}\mathbf{D}_{\mathbf{u}},\qquad
\mathbf{D}_{\mathbf{u}} = \mathbf{S}\mathbf{D}_{\mathbf{v}},\qquad
\boldsymbol{\delta} = \boldsymbol{\lambda}\mathbf{D}_{\mathbf{u}}.
\]
Let \(N_\sigma\) be a positive common multiple of all denominators
appearing in \(\mathbf{R}\), \(\mathbf{S}\), and
\(\boldsymbol{\lambda}\). If \(p > N_\sigma\), then reducing the above equalities modulo \(p\)
gives
\[
\overline{\mathbf{D}}_{\mathbf{v}}
= \overline{\mathbf{R}}\,\overline{\mathbf{D}}_{\mathbf{u}},\qquad
\overline{\mathbf{D}}_{\mathbf{u}}
= \overline{\mathbf{S}}\,\overline{\mathbf{D}}_{\mathbf{v}},\qquad
\overline{\boldsymbol{\delta}}
= \overline{\boldsymbol{\lambda}}\,\overline{\mathbf{D}}_{\mathbf{u}},
\]
where \(\overline{\mathbf{R}}\), \(\overline{\mathbf{S}}\),
\(\overline{\mathbf{D}}_{\mathbf{u}}\),
\(\overline{\mathbf{D}}_{\mathbf{v}}\) are matrices over
\(\mathbb{F}_p\), and \(\overline{\boldsymbol{\delta}}\) is a vector
over \(\mathbb{F}_p\); all entries are reduced modulo \(p\)
componentwise. Here \(\mathbb{F}_p=\mathbb{Z}_p\) is the finite field
with \(p\) elements; we use the notation \(\mathbb{F}_p\) to
emphasize its field structure. Consequently, we deduce that
\begin{equation}\label{modolo}
\ker_{\mathbb{F}_p}\overline{\mathbf{D}}_{\mathbf{u}}
= \ker_{\mathbb{F}_p}\overline{\mathbf{D}}_{\mathbf{v}},\qquad
\overline{\boldsymbol{\delta}\mathbf{g}} = \overline{\mathbf{0}}
\quad (\overline{\mathbf{g}} \in
\ker_{\mathbb{F}_p}\overline{\mathbf{D}}_{\mathbf{u}}),
\end{equation}
where
\[
\ker_{\mathbb{F}_p}\overline{\mathbf{D}}_{\mathbf{u}}
= \{\overline{\bd} \in \mathbb{F}_p^n \mid
\overline{\mathbf{D}}_{\mathbf{u}} \overline{\bd}
= \overline{\mathbf{0}}\}
\]
and
\[
\ker_{\mathbb{F}_p}\overline{\mathbf{D}}_{\mathbf{v}}
= \{\overline{\bd} \in \mathbb{F}_p^n \mid
\overline{\mathbf{D}}_{\mathbf{v}} \overline{\bd}
= \overline{\mathbf{0}}\}.
\]

Now fix a prime \(p > N_\sigma\) and an arbitrary assignment
\(\varphi \colon \{x_1,\ldots,x_n\} \to A_p\). If
\(\varphi(x_i) = 0\) for some \(i\), then
\(\varphi(\mathbf{u}) = \varphi(\mathbf{v}) = 0\) by
Lemma~\ref{lem:evaluation}\ref{item 11}. So assume
\(\varphi(x_i) = (d_i,\bar g_i)\) for each \(1\leq i\leq n\), and let
\(\mathbf{d} = (d_1,\dots,d_n)\), \(\mathbf{g} =
(g_1,\dots,g_n)\).

If
\(\mathbf{d} \notin \ker_{\mathbb{N}}\mathbf{D}_{\mathbf{u}}\), then by
Lemma~\ref{lem:evaluation}\ref{item 13},
\(\varphi(\mathbf{u}) = \varphi(\mathbf{v}) = 0\).

If
\(\mathbf{d} \in \ker_{\mathbb{N}}\mathbf{D}_{\mathbf{u}}\), then
\(\boldsymbol{\delta}\mathbf{d} = \boldsymbol{\alpha}\mathbf{d} -
\boldsymbol{\beta}\mathbf{d} = 0\), so
\(\boldsymbol{\alpha}\mathbf{d} = \boldsymbol{\beta}\mathbf{d}\),
denoted by \(s\). If \(s > p\), then
Lemma~\ref{lem:evaluation}\ref{item 13} gives
\(\varphi(\mathbf{u}) = \varphi(\mathbf{v}) = 0\). If \(s \leq p\),
then by the same lemma,
\[
\begin{array}{c}
\varphi(\mathbf{u}) \neq 0
\Leftrightarrow \mathbf{D}_{\mathbf{u}}\mathbf{g}
\equiv \mathbf{0} \pmod p
\Leftrightarrow \overline{\mathbf{g}}
\in \ker_{\mathbb{F}_p}\overline{\mathbf{D}}_{\mathbf{u}},
\\[2mm]
\hspace*{18em}\Updownarrow
\\[2mm]
\varphi(\mathbf{v}) \neq 0
\Leftrightarrow \mathbf{D}_{\mathbf{v}}\mathbf{g}
\equiv \mathbf{0} \pmod p
\Leftrightarrow \overline{\mathbf{g}}
\in \ker_{\mathbb{F}_p}\overline{\mathbf{D}}_{\mathbf{v}}.
\end{array}
\]
This implies that  either
\(\varphi(\mathbf{u}) = \varphi(\mathbf{v}) = 0\), or both
\(\varphi(\mathbf{u})\) and \(\varphi(\mathbf{v})\) are nonzero and
their nonzero values are
\[
(s,\overline{\boldsymbol{\alpha}\mathbf{g}})
\quad \text{and} \quad
(s,\overline{\boldsymbol{\beta}\mathbf{g}}),
\]
which are equal because
\(\overline{\boldsymbol{\delta}\mathbf{g}}
= \overline{\boldsymbol{\alpha}\mathbf{g}}
- \overline{\boldsymbol{\beta}\mathbf{g}}
= \overline{\mathbf{0}}\) by \eqref{modolo}.

Thus \(A_p\) satisfies \(\sigma\), as required.
\end{proof}

We are now ready to complete the proof of Theorem~\ref{mainthm}.

\vspace{0.3cm}
\textbf{Proof of Theorem~\ref{mainthm}.}
Suppose, for contradiction, that \(\mathcal{V}^{(n)}\) is finitely
based. Then \(\mathcal{V}^{(n)}\) has a finite equational basis
\(\Sigma\).  Since \(\mathcal{V}\) lies in
the interval
\([\mathsf{V}(\flat(\mathbb{N})),\mathcal{W}_{\mathbb{P}}]\), it
satisfies every \(\sigma_p\). As every \(\sigma_p\) is a
\(2\)-variable identity and \(n\geq 2\), it follows that
\(\mathcal{V}^{(n)}\) also satisfies \(\sigma_p\). Since \(\Sigma\)
is an equational basis of \(\mathcal{V}^{(n)}\), we obtain that
\(\Sigma\) derives \(\sigma_p\).

For any identity \(\sigma \in \Sigma\), \(\sigma\) is satisfied by
\(\flat(\mathbb{N})\), because \(\mathcal{V}^{(n)}\) contains
\(\mathsf{V}(\flat(\mathbb{N}))\) and \(\Sigma\) is a basis of
\(\mathcal{V}^{(n)}\). By Lemma~\ref{lem:transfer}, there is an
integer \(N_{\sigma}\) such that \(A_p\) satisfies \(\sigma\) for
each \(p>N_{\sigma}\). Now set
\[
N=\max\{N_\sigma \mid \sigma \in \Sigma\}.
\]
Then \(A_p\) satisfies all identities in \(\Sigma\) for every prime
\(p>N\). Since \(\Sigma\) derives \(\sigma_p\), it follows that
\(A_p\) satisfies \(\sigma_p\), which contradicts
Lemma~\ref{lem:separation}. Therefore, \(\mathcal{V}^{(n)}\) is
nonfinitely based.
\qed

\begin{proposition}\label{prop:scw}
Let \(W\) be a nonempty set of words. Then the flat semiring \(S_c(W)\)
is finitely based if and only if every word in \(W\) is either a
cube of a letter or has length at most \(2\).
\end{proposition}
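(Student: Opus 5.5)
The proof will be a case split on the nilpotency behaviour of \(S_c(W)\), assembling results already established. There are three exhaustive cases: \(S_c(W)\) is nilpotent; \(S_c(W)\) is not nilpotent but is \(k\)-nil for some \(k\geq 3\); or \(S_c(W)\) is not \(k\)-nil for any \(k\geq 3\). The third case is the same as \(S_c(W)\) being not \(k\)-nil for any \(k\geq 1\), because \(k\)-nil implies \((k+1)\)-nil. These cases cover everything.

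\emph{Case 1: \(S_c(W)\) nilpotent.} Proposition~\ref{prop:scwnilpotentfb} gives exactly the claimed equivalence.

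\emph{Case 2: not nilpotent but \(k\)-nil for some \(k\geq 3\).} Proposition~\ref{prop:scwnfbknil} shows that \(S_c(W)\) is nonfinitely based, so I must check that the right-hand condition also fails. Suppose every word in \(W\) were a cube of a letter or had length at most \(2\). Then every word in \(W\), and hence every word in \(W^{\leq}\), would have length at most \(3\). So \(S_c(W)\) would be \(4\)-nilpotent, a contradiction. Hence both sides of the equivalence are false.

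\emph{Case 3: not \(k\)-nil for any \(k\geq 1\).} By Corollary~\ref{coro26061420},
\[
\mathsf{V}(S_c(W))=\mathsf{V}(S_c(\{a\}^+))=\mathsf{V}(\flat(\mathbb{N})).
\]
By Corollary~\ref{coro:4ssigmap} this variety lies in \(\mathcal{W}_{\mathbb{P}}\). Therefore Theorem~\ref{mainthm}, applied with \(\mathcal{V}=\mathsf{V}(\flat(\mathbb{N}))\), shows that \(S_c(W)\) is nonfinitely based. As in Case 2, \(W^{\leq}\) contains words of arbitrary length, so \(W\) contains a word of length greater than \(3\). Thus the condition fails here too, and the equivalence holds.

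The only delicate point is making sure the three cases are exhaustive and that the threshold \(k\geq 3\) is compatible with them. Every \(S_c(W)\) is either nilpotent or not. If it is not nilpotent and is \(k\)-nil for some \(k\), then it is also \(\max(k,3)\)-nil, so Case 2 applies. Otherwise Case 3 applies. Beyond this bookkeeping, the argument is a direct citation of the earlier results.
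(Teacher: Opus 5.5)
Your proposal is correct and follows the paper's route: the paper also proves this by combining Proposition~\ref{prop:scwnilpotentfb}, Proposition~\ref{prop:scwnfbknil} and Theorem~\ref{mainthm}, with Corollary~\ref{coro26061420} reducing the remaining case to $\mathsf{V}(\flat(\mathbb{N}))$. Your checks that the three cases are exhaustive, and that the word-length condition fails in both nonfinitely based cases (otherwise $S_c(W)$ would be $4$-nilpotent), are bookkeeping that the paper's one-line proof leaves implicit.
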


\begin{proof}
This is a direct consequence of Propositions~\ref{prop:scwnilpotentfb} and~\ref{prop:scwnfbknil} and Theorem~\ref{mainthm}.
\end{proof}

\begin{proposition}\label{prop:mcwmw}
Let $S$ be a flat semiring of the form $M_c(W)$ or $M(W)$,
where $W$ contains a nonempty word.
Then $S$ is nonfinitely based.
\end{proposition}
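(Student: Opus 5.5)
We split according to whether $S$ satisfies an identity $x^k\approx x^{k+1}$ for some $k\geq 3$.

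\textbf{Case 1: $S\vDash x^k\approx x^{k+1}$ for some $k\geq 3$.} Then Proposition~\ref{mcwmw} applies directly and shows that $S$ is nonfinitely based.

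\textbf{Case 2: $S$ satisfies no identity $x^k\approx x^{k+1}$ with $k\geq 3$.} We aim to show that $\mathsf{V}(S)$ lies in the interval $[\mathsf{V}(\flat(\mathbb{N})),\mathcal{W}_{\mathbb{P}}]$; Theorem~\ref{mainthm} then finishes the proof. The upper bound $\mathsf{V}(S)\subseteq\mathcal{W}_{\mathbb{P}}$ is Corollary~\ref{coro:4ssigmap}, which covers both $M_c(W)$ and $M(W)$. For the lower bound, it suffices to put $S(a^k)$ in $\mathsf{V}(S)$ for every $k\geq 1$, since
\[
\mathsf{V}(\flat(\mathbb{N}))=\mathsf{V}(S_c(\{a\}^+))=\mathsf{V}(S(a^k)\mid k\geq 1)
\]
by Corollary~\ref{wwi} and the identification of $S_c(\{a\}^+)$ with $\flat(\mathbb{N})$.

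Fix $k\geq 3$ (the smaller values of $k$ follow, since $S(a^j)$ is a quotient of $S(a^k)$ for $j\le k$). The failure of $x^k\approx x^{k+1}$ gives an element $s$ with $s^k\neq s^{k+1}$. The element $s$ cannot be $0$ or $1$, since both are idempotent. Hence $s$ is a nonempty word in $W^{\leq}$ with $s^k\neq 0$, so $s^k\in W^{\leq}$.

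Let $T$ be the subsemiring generated by $s$ alone. Its nonzero elements are $s,s^2,\dots,s^r$, where $r\geq k$ is maximal with $s^r\in W^{\leq}$. These are pairwise distinct words, and any sum of distinct ones equals $0$. Hence $T\cong S(a^r)$.

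Now take $J=\{s^i\mid i>k\}\cup\{0\}$. It is a multiplicative ideal and, in a flat semiring, an additive filter, and $T/J\cong S(a^k)$. Therefore $S(a^k)\in\mathsf{V}(S)$ for all $k$.

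The main point to check is that Case~2 genuinely produces arbitrarily high nonzero powers. A flat semiring of this form that is $k$-nil for some $k$ satisfies $x^k\approx x^{k+1}$, since both sides equal $x$ or $0$ appropriately. So the negation indeed yields, for each $k$, a word whose $k$th power is nonzero. This is exactly what the argument above uses.
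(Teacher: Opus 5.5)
Your proof is correct and is essentially the paper's: you use the same case split on $x^k\approx x^{k+1}$ ($k\geq 3$) and Proposition~\ref{mcwmw} in Case~1. In Case~2 you place $\mathsf{V}(S)$ in $[\mathsf{V}(\flat(\mathbb{N})),\mathcal{W}_{\mathbb{P}}]$ via one-generated subsemirings and Corollary~\ref{coro:4ssigmap}, then apply Theorem~\ref{mainthm}.

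The only slip is that for infinite $W$ a maximal $r$ need not exist; then $T\cong\flat(\mathbb{N})$, which is the paper's Subcase~2.1. Your quotient $T/J\cong S(a^k)$ holds regardless, so this is harmless and in fact merges the paper's two subcases into one.
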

\begin{proof}
We proceed by considering two cases.

\textbf{Case 1.} $S$ satisfies the identity $x^k\approx x^{k+1}$ for some $k\geq 3$.
Then, by Proposition~\ref{mcwmw}, $S$ is nonfinitely based.

\textbf{Case 2.} $S$ does not satisfy the identity $x^{k} \approx x^{k+1}$ for any $k\geq 3$.
Then, for each $k\geq 3$, there exists a nonempty word $\mathbf{w}_k$ such that
$\mathbf{w}_k^k\in S\setminus \{0\}$.
Consider the subsemiring $A_k$ of $S$ generated by $\mathbf{w}_k$.
Then $A_k$ is isomorphic to $\flat(\mathbb N)$ or $S(a^{t_k})$ for some $t_k\geq k$.

We distinguish two subcases.

\textbf{Subcase 2.1.} $A_k$ is isomorphic to $\flat(\mathbb{N})$ for some $k\geq 3$.
Then $\mathsf{V}(\flat(\mathbb{N}))$ is a subvariety of $\mathsf{V}(S)$.

\textbf{Subcase 2.2.} For every $k\geq 3$, $A_k$ is isomorphic to $S(a^{t_k})$ for some $t_k\geq k$.
Then, for each $k\geq 3$, we have
\[
\mathsf{V}(S(a^k)) \subseteq \mathsf{V}(S(a^{t_k}))\subseteq \mathsf{V}(S).
\]
It follows from Corollary~\ref{wwi} that
\[
\mathsf{V}(\flat(\mathbb N)) = \mathsf{V}(S(a^k)\mid k\geq 3)\subseteq \mathsf{V}(S).
\]

In both subcases, we have shown that $\mathsf{V}(\flat(\mathbb N))$ is a subvariety of $\mathsf{V}(S)$.

On the other hand, by Corollary~\ref{coro:4ssigmap}, $S$ satisfies every identity $\sigma_p$.
Hence $\mathsf{V}(S)$ is a subvariety of $\mathcal{W}_{\mathbb{P}}$.

We have shown that $\mathsf{V}(S)$ lies in the interval
$[\mathsf{V}(\flat(\mathbb N)), \mathcal{W}_{\mathbb{P}}]$.
By Theorem~\ref{mainthm}, $\mathsf{V}(S)$ is nonfinitely based.
\end{proof}

\begin{proposition}\label{prop:swnfb}
If the flat semiring \(S(W)\) is not \(k\)-nil for any
\(k\geq 1\), then \(S(W)\) is nonfinitely based.
\end{proposition}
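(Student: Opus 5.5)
The plan is to place \(\mathsf{V}(S(W))\) in the interval \([\mathsf{V}(\flat(\mathbb{N})),\mathcal{W}_{\mathbb{P}}]\) and then apply Theorem~\ref{mainthm}, following the pattern of the proof of Proposition~\ref{prop:mcwmw}.

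\textbf{Upper bound.} By Corollary~\ref{coro:4ssigmap}, \(S(W)\) satisfies every identity \(\sigma_p\). Hence \(\mathsf{V}(S(W))\subseteq\mathcal{W}_{\mathbb{P}}\).

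\textbf{Lower bound.} We need \(\flat(\mathbb{N})\in\mathsf{V}(S(W))\). Since \(S(W)\) is not \(k\)-nil for any \(k\geq 1\), for each \(k\geq 1\) there is a nonempty word \(\mathbf{w}_k\in W^{\leq}\) with \(\mathbf{w}_k^k\in W^{\leq}\). Let \(A_k\) be the subsemiring of \(S(W)\) generated by \(\mathbf{w}_k\). Its nonzero elements are the powers \(\mathbf{w}_k^i\) that lie in \(W^{\leq}\). These form an initial segment \(\{1,\dots,t\}\) of exponents, because subwords of subwords of words in \(W\) are again in \(W^{\leq}\). Different exponents give different words, since their lengths differ. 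Hence \(A_k\cong S(a^{t_k})\) for some \(t_k\geq k\), or \(A_k\cong\flat(\mathbb{N})\) when all powers of \(\mathbf{w}_k\) lie in \(W^{\leq}\).

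\begin{itemize}
\item If some \(A_k\cong\flat(\mathbb{N})\), we are done.
\item Otherwise, for every \(k\) we have \(S(a^k)\in\mathsf{V}(S(a^{t_k}))\subseteq\mathsf{V}(S(W))\). Here \(S(a^k)\) is the Rees quotient of \(S(a^{t_k})\) by the ideal of powers with exponent greater than \(k\), or alternatively we use Corollary~\ref{wwi} with \(a^k\) a subword of \(a^{t_k}\). Then Corollary~\ref{wwi} together with \(S_c(\{a\}^+)\cong\flat(\mathbb{N})\) gives
\[
\mathsf{V}(\flat(\mathbb{N}))=\mathsf{V}(S(a^k)\mid k\geq 1)\subseteq\mathsf{V}(S(W)).
\]
\end{itemize}

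\textbf{Conclusion.} \(\mathsf{V}(S(W))\) lies in \([\mathsf{V}(\flat(\mathbb{N})),\mathcal{W}_{\mathbb{P}}]\), so Theorem~\ref{mainthm} shows it is nonfinitely based.

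The only step needing care is identifying the monogenic subsemiring \(A_k\) inside the noncommutative \(S(W)\) as \(S(a^{t})\) or \(\flat(\mathbb{N})\). Specifically, one must check that \(\mathbf{w}_k^i\cdot\mathbf{w}_k^j\) is nonzero exactly when \(\mathbf{w}_k^{i+j}\in W^{\leq}\); this follows directly from the definition of multiplication in \(S(W)\). Everything else is already available from earlier results.
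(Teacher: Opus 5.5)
Your proposal is correct and follows the paper's proof essentially verbatim. The paper likewise obtains \(\flat(\mathbb{N})\in\mathsf{V}(S(W))\) by the monogenic-subsemiring argument of Case~2 of Proposition~\ref{prop:mcwmw}, gets \(S(W)\in\mathcal{W}_{\mathbb{P}}\) from Corollary~\ref{coro:4ssigmap}, and concludes by Theorem~\ref{mainthm}; your explicit check that the subsemiring generated by \(\mathbf{w}_k\) is \(S(a^{t_k})\) or \(\flat(\mathbb{N})\) just spells out a detail the paper leaves implicit.
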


\begin{proof}
Arguing as in Case~2 of the proof of
Proposition~\ref{prop:mcwmw}, we obtain
\(\flat(\mathbb{N})\in \mathsf{V}(S(W))\). By
Corollary~\ref{coro:4ssigmap},
\(S(W)\in \mathcal{W}_{\mathbb{P}}\). Hence
\(\mathsf{V}(S(W))\) lies in the interval
\([\mathsf{V}(\flat(\mathbb{N})),\mathcal{W}_{\mathbb{P}}]\). By
Theorem~\ref{mainthm}, \(S(W)\) is nonfinitely based.
\end{proof}

\begin{proposition}
Every variety in the interval $[\mathsf{V}(\flat(\mathbb N)),\mathsf{V}(\mathbb Z)]$ is nonfinitely based,
where $\mathbb Z$ denotes the max-plus semiring $(\mathbb Z,\max,+)$.
In particular, $\flat(\mathbb Z)$ and $\mathbb Z$ are nonfinitely based.
\end{proposition}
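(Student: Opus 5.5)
The plan is to reduce the statement to Theorem~\ref{mainthm}. It suffices to show that the interval $[\mathsf{V}(\flat(\mathbb N)),\mathsf{V}(\mathbb Z)]$ is contained in $[\mathsf{V}(\flat(\mathbb N)),\mathcal{W}_{\mathbb P}]$, that is, that $\mathsf{V}(\mathbb Z)\subseteq \mathcal{W}_{\mathbb P}$. This is immediate from Lemma~\ref{lem:max+r}: the max-plus semiring $(\mathbb Z,\max,+)$ satisfies every $\sigma_p$, so the variety it generates lies in the variety defined by $\{\sigma_p\mid p\in\mathbb P\}$. Here we also tacitly need $\mathsf{V}(\flat(\mathbb N))\subseteq \mathsf{V}(\mathbb Z)$, but this is part of the hypothesis that the interval is nonempty. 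I would nevertheless verify it, both to make the statement meaningful and because the ``in particular'' needs it for $\mathbb Z$ and $\flat(\mathbb Z)$.

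For that inclusion, I would first show $\flat(\mathbb Z)\in\mathsf V(\mathbb Z)$. A standard route is to take the subsemiring of $\mathbb Z\times\mathbb Z$ (max-plus, coordinatewise) consisting of the pairs $(n,-n)$ together with all pairs $(a,b)$ with $a+b>0$. Sums of distinct pairs $(n,-n),(m,-m)$ give $(\max(n,m),\max(-n,-m))$, whose coordinate sum is $|n-m|>0$. Products add coordinate sums, so the set $J=\{(a,b): a+b>0\}$ is an ideal and an order filter. Collapsing $J$ gives a flat semiring isomorphic to $\flat(\mathbb Z)$ via $(n,-n)\mapsto n$.

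To make the quotient step clean, I would instead take the subsemiring generated by the diagonal-type elements $(n,-n)$ and check it is closed as described. The point to check is that $J$ is an ideal under the semiring operations and that the Rees-type quotient is a homomorphic image. Both follow because the coordinate sum is $\geq 0$ on the generated subalgebra and is additive under multiplication and monotone under $\max$.

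Then $\flat(\mathbb N)$ is a subsemiring of $\flat(\mathbb Z)$ (the positive integers together with $\infty$), so
\[
\mathsf V(\flat(\mathbb N))\subseteq\mathsf V(\flat(\mathbb Z))\subseteq\mathsf V(\mathbb Z)\subseteq\mathcal W_{\mathbb P}.
\]
Theorem~\ref{mainthm} then gives that every variety in the interval is nonfinitely based, and both $\mathsf V(\flat(\mathbb Z))$ and $\mathsf V(\mathbb Z)$ lie in it.

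The main obstacle is the embedding of $\flat(\mathbb Z)$ into $\mathsf V(\mathbb Z)$, specifically verifying that the constructed quotient is exactly flat. We need that any two distinct non-$J$ elements sum into $J$, and that no element of $J$ factors back outside $J$. If the direct product construction proves awkward, I would alternatively note that $\flat(\mathbb Z)$ satisfies $\sigma_p$ directly by Lemma~\ref{lem:flatsigmap}, since $p$-th powers are injective on $\mathbb Z$. This handles $\flat(\mathbb Z)$ itself via Theorem~\ref{mainthm} independently of $\mathbb Z$. Only the inclusion $\flat(\mathbb N)\in\mathsf V(\mathbb Z)$, needed for $\mathbb Z$, would then require the construction.
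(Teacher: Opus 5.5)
Your proof is correct. Its skeleton matches the paper's: Lemma~\ref{lem:max+r} gives $\mathsf V(\mathbb Z)\subseteq\mathcal W_{\mathbb P}$, you show $\flat(\mathbb Z)\in\mathsf V(\mathbb Z)$, you use that $\flat(\mathbb N)$ is a subsemiring of $\flat(\mathbb Z)$, and you apply Theorem~\ref{mainthm}.

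The genuine difference is in the key step $\flat(\mathbb Z)\in\mathsf V(\mathbb Z)$.

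\textbf{The paper's route.} It argues equationally. Take an identity $\bu\approx\bu+\bq$ of $\mathbb Z$ and an assignment $\varphi$ into $\flat(\mathbb Z)$ with $\varphi(\bu)=k\neq\infty$. The paper rules out $\varphi(\bq)=\infty$, $\varphi(\bq)>k$ and $\varphi(\bq)<k$ using three auxiliary assignments into $\mathbb Z$. These are: $0$ on $c(\bu)$ and $1$ on the new variables of $\bq$; then $\varphi$ itself; then $-\varphi$.

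\textbf{Your route.} You exhibit $\flat(\mathbb Z)$ directly as a homomorphic image of a subsemiring of $\mathbb Z\times\mathbb Z$. The subsemiring is $\{(a,b): a+b\geq 0\}$, which is exactly what the elements $(n,-n)$ generate, and the quotient collapses $J=\{(a,b): a+b>0\}$.

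Your verification is complete:
\begin{itemize}
\item the coordinate sum is additive under multiplication and monotone under $\max$, so $J$ is an ideal and an order filter;
\item the elements of coordinate sum $0$ are exactly the pairs $(n,-n)$;
\item two distinct such pairs sum to an element with coordinate sum $|n-m|>0$.
\end{itemize}

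\textbf{What each buys.} Your construction gives an explicit structural witness using only two factors. It also shows why the inclusion holds: the two coordinates encode precisely the paper's assignments $\varphi$ and $-\varphi$. The paper's separate $\infty$-case is absorbed automatically by collapsing $J$. The paper's argument, in return, needs no congruence checking and stays at the level of identities $\bu\approx\bu+\bq$, as elsewhere in the paper.

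Your fallback via Lemma~\ref{lem:flatsigmap} is valid for $\flat(\mathbb Z)$ alone, but it is unnecessary, since the construction goes through.
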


\begin{proof}
By Lemma~\ref{lem:max+r}, $\mathbb Z$ satisfies the identity $\sigma_p$ for each prime $p$.
Thus $\mathbb Z\in \mathcal{W}_{\mathbb P}$.
We now make the following claim: $\flat(\mathbb Z)\in \mathsf{V}(\mathbb Z)$.
Indeed, let $\bu\approx \bu+\bq$ be an arbitrary ai-semiring identity satisfied by $\mathbb Z$,
where $\bu = \bu_1+\cdots +\bu_m$, $\bu_i,\bq\in X^+$.
We show that $\flat(\mathbb Z)$ also satisfies $\bu\approx \bu+\bq$.
Let $\varphi\colon c(\bu)\cup c(\bq)\to \flat(\mathbb Z)$ be an arbitrary assignment.
If $\varphi(\bu)=\infty$, then $\varphi(\bu)=\infty = \varphi(\bu+\bq)$,
since $\infty$ is the additive absorbing element.
If $\varphi(\bu) = k\neq \infty$, then $\varphi(x)\in \mathbb Z$ for all $x\in c(\bu)$,
and $\varphi(\bu_i) = k$ for all $1\leq i\leq m$.
It remains to prove that $\varphi(\bq) =k$.
Suppose, for contradiction, that $\varphi(\bq)\neq k$.
Then $\varphi(\bq)$ must fall into one of the following three cases.

\textbf{Case 1.} $\varphi(\bq)=\infty$.
Then there exists $x\in c(\bq)$ such that $\varphi(x)=\infty$.
This shows that $c(\bq)\nsubseteq c(\bu)$.
Take an assignment $\psi_1\colon c(\bu)\cup c(\bq)\to \mathbb Z$ defined by
\[
\psi_1(x) =
\begin{cases}
0,&x\in c(\bu),\\
1,&x\in c(\bq)\backslash c(\bu).
\end{cases}
\]
Then $\psi_1(\bu) = 0$, but $\psi_1(\bq) \geq 1$.
Hence $\psi_1(\bu)\neq \psi_1(\bq)= \psi_1(\bu+\bq)$.

\textbf{Case 2.} $\varphi(\bq) = t>k$.
Take an assignment $\psi_2\colon c(\bu)\cup c(\bq)\to \mathbb Z$ defined by $\psi_2(x) = \varphi(x)$.
Note that the multiplicative semigroup of $\flat(\mathbb Z)$ restricted to $\mathbb Z$
is exactly ordinary addition of integers, which coincides with the multiplication in $\mathbb Z$.
Therefore, $\psi_2(\bu_i) =\varphi(\bu_i) = k$ and $\psi_2(\bq) = \varphi(\bq) = t$.
Hence
\[
\psi_2(\bu) = \max_{1\leq i\leq m}\psi_2(\bu_i) = k,
\]
and
\[
\psi_2(\bu+\bq) = \max\{\psi_2(\bu),\psi_2(\bq)\} = \max\{ k , t\} = t.
\]
This shows that $\psi_2(\bu)\neq \psi_2(\bu+\bq)$.

\textbf{Case 3.} $\varphi(\bq) = t < k$.
Take an assignment $\psi_3\colon c(\bu)\cup c(\bq)\to \mathbb Z$ defined by $\psi_3(x) = -\varphi(x)$.
As in Case 2, it is easy to verify that
\[
\psi_3(\bu) = -k,\qquad \psi_3(\bu+\bq) = \max\{-k,-t\} = -t.
\]
Thus $\psi_3(\bu)\neq \psi_3(\bu+\bq)$.

All three cases lead to a contradiction with $\mathbb Z\vDash \bu\approx \bu+\bq$.
Hence $\varphi(\bq)=\varphi(\bu)=k$.
This implies that $\flat(\mathbb Z)\vDash \bu\approx \bu+\bq$.
Therefore, $\flat(\mathbb Z)\in \mathsf{V}(\mathbb Z)$.

Since $\flat(\mathbb N)$ is a subsemiring of $\flat(\mathbb Z)$, we have
\[
\mathsf{V}(\flat(\mathbb N))\subseteq \mathsf{V}(\flat(\mathbb Z))
\subseteq \mathsf{V}(\mathbb Z)\subseteq \mathcal{W}_{\mathbb P}.
\]
By Theorem~\ref{mainthm}, every variety in the interval
$[\mathsf{V}(\flat(\mathbb N)),\mathsf{V}(\mathbb Z)]$ is nonfinitely based.
In particular, $\flat(\mathbb Z)$ and $\mathbb Z$ are nonfinitely based.
\end{proof}

\begin{remark}
The fact that the semirings $\flat(\mathbb Z)$ and $\mathbb Z$ are nonfinitely based
was also established in~\cite{mssemifield}.
\end{remark}

To end this section, we provide more information about $\mathsf{V}(\flat(\mathbb{N}))$,
which is not covered by Theorem~\ref{mainthm} when $n = 1$.

\begin{proposition}
Let \(\mathcal{V}\) denote the variety \(\mathsf{V}(\flat(\mathbb{N}))\).
Then \(\mathcal{V}^{(n)}\) is nonfinitely based for all \(n \geq 1\).
\end{proposition}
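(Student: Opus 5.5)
For \(n\geq 2\) the claim is already contained in Theorem~\ref{mainthm}, since \(\mathcal{V}=\mathsf{V}(\flat(\mathbb N))\) lies in \([\mathsf{V}(\flat(\mathbb N)),\mathcal{W}_{\mathbb P}]\) by Corollary~\ref{coro:4ssigmap}. So the plan is to handle the remaining case \(n=1\), that is, to show that the variety \(\mathcal{V}^{(1)}\) defined by all one-variable identities of \(\flat(\mathbb N)\) is nonfinitely based. The difficulty is that \(\sigma_p\) is a two-variable identity, so it is not available in \(\mathcal{V}^{(1)}\). We therefore need a one-variable replacement and a family of one-generated witness algebras.

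First I would describe the one-variable identities of \(\flat(\mathbb N)\) explicitly. A one-variable term is \(\bu=x^{a_1}+\cdots+x^{a_m}\). Under \(x\mapsto a^d\) it evaluates to \(0\) unless \(m=1\). So \(\flat(\mathbb N)\vDash \bu\approx\bv\) holds exactly when \(\bu\) and \(\bv\) are the same single word, or when both have at least two distinct summands. (The case of \(x\mapsto 0\) gives \(0\) on both sides.) Thus \(\mathcal{V}^{(1)}\) is axiomatized by the identities
\[
\tau_{a,b,c}\colon x^a+x^b\approx x^a+x^b+x^c \qquad (a\neq b),
\]
together with their consequences \(x^a+x^b\approx x^{a'}+x^{b'}\).

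Next I would choose witnesses on which all but finitely many of these identities hold. My first candidate is the one-generated flat semiring \(B_p\): the cyclic semigroup \(\{x,x^2,\dots\}\) with index \(1\) and period \(p\), i.e. \(x^{i}=x^{i+p}\), made flat by adjoining \(0\) with \(x^i+x^j=0\) when \(x^i\neq x^j\). Since \(\mathbb Z_p\) is cancellative, this is \(\flat(\mathbb Z_p)\).

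On \(\flat(\mathbb Z_p)\), the identity \(x^a+x^b\approx x^a+x^b+x^c\) fails only for an element \(g\) with \(ag=bg\neq cg\), i.e. \(p\mid (a-b)\) but \(p\nmid (a-c)\). Hence for a finite set \(\Sigma\) of such identities, every prime \(p\) larger than all occurring exponent differences gives \(\flat(\mathbb Z_p)\vDash\Sigma\). On the other hand, \(\flat(\mathbb Z_p)\) fails \(x+x^{p+1}\approx x+x^{p+1}+x^2\), which is a valid one-variable identity of \(\flat(\mathbb N)\). So \(\flat(\mathbb Z_p)\notin\mathcal{V}^{(1)}\), yet it satisfies \(\Sigma\). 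This gives nonfinite basability of \(\mathcal{V}^{(1)}\) by the same compactness argument as in the proof of Theorem~\ref{mainthm}.

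The main obstacle is making sure that a finite basis of \(\mathcal{V}^{(1)}\) can be taken to consist of one-variable identities. This is true because \(\mathcal{V}^{(1)}\) is defined by one-variable identities: any finite basis is equivalent to finitely many of those generating identities. We also need to check that the single-word cases \(x^a\approx x^a\) are trivial, and that identities with both sides sums of at least two words follow from the \(\tau\)'s with bounded exponents. I would verify the failure computation carefully: take \(g=\bar 1\) and check \(p\mid(p+1-1)\) while \(p\nmid(2-1)\).
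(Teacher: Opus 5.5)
Your proposal is correct and follows essentially the paper's own argument. The case $n\geq 2$ comes from Theorem~\ref{mainthm}. For $n=1$ you use compactness to reduce to finitely many one-variable identities whose sides are not monomials, take $\flat(\mathbb{Z}_p)$ (the paper's $\flat(C_p)$) for $p$ larger than all exponents involved, and separate with an identity $x+x^{p+1}\approx x+x^{p+1}+x^{c}$; you take $c=2$ where the paper takes $c=p+2$. Your divisibility criterion for when $\tau_{a,b,c}$ fails in $\flat(\mathbb{Z}_p)$ is just a sharper phrasing of the paper's check that $x^s+x^t$ evaluates to $0$ whenever $0<t-s<p$.
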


\begin{proof}
The case \(n\geq 2\) is established by Theorem~\ref{mainthm}.
For \(n=1\), first observe that if \(\flat(\mathbb{N})\) satisfies an
identity of the form
\(x^m \approx x^{n_1}+\cdots+x^{n_k}\), then necessarily
\(m=n_1=\cdots=n_k\) (by taking the assignment \(x\mapsto a\)); that
is, such an identity is trivial.

Suppose, for contradiction, that \(\mathcal{V}^{(1)}\) is finitely
based. By the compactness theorem of equational logic, there exists a
finite set \(\Sigma\) of one-variable nontrivial identities satisfied
by \(\flat(\mathbb{N})\) that forms an equational basis of
\(\mathcal{V}^{(1)}\). We may write
\[
\Sigma=\{\,f_i(x)\approx g_i(x)\mid 1\leq i\leq m\,\},
\]
where neither \(f_i(x)\) nor \(g_i(x)\) is a monomial.

Let \(N\) be the maximum exponent of \(x\) appearing among
\[
f_1(x),\,g_1(x),\,\ldots,\,f_m(x),\,g_m(x),
\]
and choose a prime \(p>N\). Let
\(C_p=\{1,g,g^2,\ldots,g^{p-1}\}\) be the cyclic group of order \(p\).
We claim that
\[
\flat(C_p)\vDash f_i(x)\approx g_i(x)
\]
for each \(1\leq i\leq m\). Indeed, for any assignment
\(\varphi\colon\{x\}\to\flat(C_p)\), consider the following cases.

\textbf{Case 1.} \(\varphi(x)\in\{0,1\}\). Then
\[
\varphi(f_i(x))=\varphi(g_i(x)),
\]
since \(0\) and \(1\) are both multiplicatively and additively
idempotent.

\textbf{Case 2.} \(\varphi(x)=g^k\) for some \(1\leq k\leq p-1\).
Since each \(f_i(x)\) and \(g_i(x)\) is a polynomial (not a single
word), each of \(f_i(x)\) and \(g_i(x)\) contains two monomials with
distinct exponents between \(1\) and \(N\).
We claim that \(\varphi(x^s+x^t)=0\) for any \(1\leq s<t\leq N\);
otherwise,
\[
\varphi(x^s+x^t)=g^{ks}+g^{kt}\neq 0.
\]
By the flatness of addition, \(g^{ks}=g^{kt}\), hence
\(g^{k(t-s)}=1\). By Lagrange's theorem, \(p\mid k(t-s)\).
Since \(1\leq k<p\) and \(1\leq t-s<p\), neither \(k\) nor \(t-s\)
is divisible by \(p\), so \(p\) does not divide \(k(t-s)\),
a contradiction.
Thus, for every \(1\leq i\leq m\),
\[
\varphi(f_i(x))=0=\varphi(g_i(x)).
\]
This implies that \(\flat(C_p)\) satisfies every identity in \(\Sigma\).
On the other hand, the identity
\begin{equation}\label{id26092501}
x+x^{p+1}\approx x+x^{p+1}+x^{p+2}
\end{equation}
is satisfied by \(\flat(\mathbb{N})\), since \(x+x^{p+1}\) always
evaluates to \(0\) in \(\flat(\mathbb{N})\) and \(0\) is an absorbing
element for addition, but it fails in \(\flat(C_p)\), since
\[
g+g^{p+1}=g\neq 0=g+g^2=g+g^{p+1}+g^{p+2}.
\]
Thus \(\Sigma\) cannot derive the identity \eqref{id26092501}, and so
\(\Sigma\) is not a basis for \(\mathcal{V}^{(1)}\), a contradiction.
Therefore, \(\mathcal{V}^{(1)}\) is nonfinitely based.
\end{proof}

\section{Conclusion}\label{sec:conclu}
We have completely solved the finite basis problem for the flat semirings
$S_c(W)$, $M_c(W)$ and $M(W)$
(Propositions~\ref{prop:scw} and~\ref{prop:mcwmw}),
yielding a complete classification of these algebras with respect to the finite basis property.
By contrast, $S(W)$ is nonfinitely based whenever $S(W)$ is not $k$-nil for any $k\geq 1$
(Proposition~\ref{prop:swnfb}).
By~\cite{gr2}, the only remaining open case for $S(W)$ is when $W$ contains
a word of length at least $4$ and $S(W)$ is $4$-nil.
A natural next step would be to extend our approach to other classes of flat semirings or to
investigate the finite basis problem for related algebras.

\subsection*{Acknowledgment}
The authors thank Professor Marcel Jackson for his valuable comments
and suggestions. In particular, he pointed out that both \(\flat(\mathbb{Z})\) and the
max-plus semiring \(\mathbb Z\) satisfy every identity \(\sigma_p\).
This observation motivated the general formulation of Theorem~\ref{mainthm}.
Miaomiao Ren, the corresponding author, is supported by the National
Natural Science Foundation of China (12371024, 12571020).

\end{document}